\newtheorem{thm}{Theorem}
\numberwithin{thm}{section}
\numberwithin{equation}{section}
\newtheorem{theorem}[thm]{Theorem}
\newtheorem*{theorem*}{Theorem}
\newtheorem{corollary}[thm]{Corollary}
\newtheorem*{corollary*}{Corollary}
\newtheorem{lemma}[thm]{Lemma}
\newtheorem{lemma-def}[thm]{Lemma-Definition}
\newtheorem{prop}[thm]{Proposition}
\newtheorem{proposition}[thm]{Proposition}
\newtheorem*{conjecture*}{Conjecture}
\newtheorem*{question*}{Question}
\newtheorem*{definition*}{Definition}
\newtheorem*{definitions*}{Definitions}
\newtheorem*{rem*}{Remark}
\theoremstyle{remark}
\newtheorem{remark}[thm]{Remark}
\newtheorem*{remark*}{Remark}
\newtheorem*{remarks*}{Remarks}
\newtheorem*{example*}{Example}
\newtheorem*{examples*}{Examples}
\newcommand{\R}{\mathbb{R}}
\newcommand{\Z}{\mathbb{Z}}
\newcommand{\Q}{\mathbb{Q}}
\newcommand{\C}{\mathbb{C}}
\newcommand{\N}{\mathbb{N}}
\newcommand{\K}{\mathbb K}
\def\CP{{\mathbb C}P}
\newcommand{\Id}{\mathit{Id}}
\newcommand{\ep}{\epilon}
\newcommand{\ga}{\gamma}
\newcommand{\Sp}{\text{Sp}}
\def\vr{\varphi}
\def\om{\omega}
\def\ep{\epsilon}
\def\Om{\Omega}
\DeclareMathOperator{\lcm}{lcm}
\def\M{{\widetilde M}}
\def\NN{{\widetilde N}}
\def\LL{{\Lambda}}
\def\cz{\mu}
\def\HC{{\mathrm{HC}}}
\def\HF{{\mathrm{HF}}}
\def\CF{{\mathrm{CF}}}
\def\EHF{{\mathrm{HF}}}
\def\SH{{\mathrm{SH}}}
\def\ESH{{\mathrm{SH}}}
\def\CC{{\mathrm{CC}}}
\def\Wc{{W_{\text{cone}}}}
\def\W{{\widehat W}}
\def\F{{\widetilde F}}
\def\bM{{\widebar M}}
\def\bxi{{\widebar\xi}}
\def\balpha{{\widebar\alpha}}
\begin{document}

\title[Two closed orbits for non-degenerate Reeb flows]{Two closed orbits for non-degenerate Reeb flows}

\author[M.~Abreu]{Miguel Abreu}
\address{Center for Mathematical Analysis, Geometry and Dynamical Systems,
Instituto Superior T\'ecnico, Universidade de Lisboa, 
Av. Rovisco Pais, 1049-001 Lisboa, Portugal}
\email{mabreu@math.tecnico.ulisboa.pt, macarini@math.tecnico.ulisboa.pt}
 
\author[J.~Gutt]{Jean Gutt}
\address{Mathematisches Institut, Universit\"at zu K\"oln, Weyertal 86-90, D-50931 K\"oln, Germany}
\email{gutt@math.uni-koeln.de}

\author[J.~Kang]{Jungsoo Kang}
\address{Department of Mathematical Sciences, Research institute in Mathematics, Seoul National University, Gwanak-Gu, Seoul 08826, South Korea}
\email{jungsoo.kang@snu.ac.kr}

\author[L.~Macarini]{Leonardo Macarini}
\address{Universidade Federal do Rio de Janeiro, Instituto de Matem\'atica,
Cidade Universit\'aria, CEP 21941-909 - Rio de Janeiro - Brazil}
\email{leomacarini@gmail.com}

\thanks{MA and LM were partially funded by FCT/Portugal through UID/MAT/04459/2013 and 
project PTDC/MAT-PUR/29447/2017. JK was supported by Samsung
Science and Technology Foundation (SSTF-BA1801-01). JG is supported by the SFB/TRR
191 ``Symplectic structures in Geometry, Algebra and Dynamics''. LM was partially supported by CNPq, Brazil.}

\subjclass[2010]{53D40, 53D25, 37J10, 37J55} \keywords{Closed orbits,
  Conley-Zehnder index, Reeb flows, equivariant symplectic homology}


\begin{abstract}
We prove that every non-degenerate Reeb flow on a closed contact manifold $M$ admitting a strong symplectic filling $W$ with vanishing first Chern class carries at least two geometrically distinct closed orbits provided that the positive equivariant symplectic homology of $W$ satisfies a mild condition. Under further assumptions, we establish the existence of two geometrically distinct closed orbits on any contact finite quotient of $M$. Several examples of such contact manifolds are provided, like displaceable ones, unit cosphere bundles, prequantization circle bundles, Brieskorn spheres and toric contact manifolds. We also show that this condition on the equivariant symplectic homology is preserved by boundary connected sums of Liouville domains. As a byproduct of one of our applications, we prove a sort of Lusternik-Fet theorem for Reeb flows on the unit cosphere bundle of not rationally aspherical manifolds satisfying suitable additional assumptions.
\end{abstract}

\maketitle
\section{Introduction}
A contact manifold is an odd-dimensional manifold $M^{2n+1}$ endowed with a contact structure, i.e.\ a codimension one distribution $\xi$ having a maximal non-integrability property. If we write locally the distribution as the kernel of a $1$-form, $\xi=\ker \alpha$, the condition is that $\alpha\wedge (d\alpha)^{n}$ is nowhere vanishing; such a $1$-form $\alpha$ is called a contact form. Throughout this paper we will tacitly assume that a contact structure is co-oriented, that is, $\alpha$ is defined globally. Associated to a contact form $\alpha$ we have the corresponding Reeb vector field $R_\alpha$ uniquely characterized by the equations $\iota_{R_\alpha}d\alpha=0$ and $\alpha(R_\alpha)=1$.

A central question in contact geometry is the Weinstein conjecture which states that every contact form on a closed manifold carries at least one closed Reeb orbit. The Weinstein conjecture was proved in dimension three by Taubes in 2007 \cite{Tau}. Taubes' result was later improved by Cristofaro-Gardiner and Hutchings \cite{GH} who proved that every contact form on a closed manifold of dimension three carries at least two simple closed Reeb orbits (this result for the tight sphere $S^3$ was proved independently in \cite{GHHM}; see also \cite{LL} for another alternative proof using a result from \cite{GHHM}). Unfortunately, these results hold only in dimension three and the situation in higher dimensions is wide open.

Suppose that $M$ admits a strong symplectic filling $(W,\om)$ such that $c_1(TW)=0$. For the sake of simplicity, assume, for a moment, that $W$ is a Liouville domain, that is, $\om=d\lambda$ is exact and the dual vector field of $\lambda$ with respect to $\om$ points outwards along $\partial W$. Then, one can associate to $W$ its positive $S^1$-equivariant symplectic homology $\HC_*(W)$ with an integer grading. It was introduced by Viterbo \cite{Vit} and developed by Bourgeois and Oancea \cite{BO10, BO13a, BO13b, BO17}. (For the sake of brevity and to emphasize the analogy with contact homology, we denote this homology by $\HC_*(W)$ rather than the usual notation $\SH^{+,S^1}_*(W)$ used in \cite{BO10, BO13a, BO13b, BO17}.) The positive equivariant symplectic homology has a filtration given by the free homotopy classes of loops in $W$ and given a set $\Gamma$ of free homotopy classes of loops we denote by $\HC_*^\Gamma(W)$ the corresponding homology.

Given a non-degenerate contact form $\alpha$ on $M$, it turns out that $\HC_*^\Gamma(W)$ can be obtained as the homology of a chain complex generated by the good closed Reeb orbits of $\alpha$ with free homotopy class in $\Gamma$; see \cite{GG,GGM2,GU}. In particular, the Weinstein conjecture holds whenever $\HC_*(W)$ does not vanish.

Assuming that $\HC_*(W)$ does not vanish, it is natural to ask if we can get more than one closed orbit. This is the multiplicity problem for closed Reeb orbits and the fundamental difficulty here is how to distinguish simple orbits from iterated ones. This difficulty already manifests itself in the classical problem of the multiplicity of closed geodesics on Riemannian manifolds; c.f. \cite{Kli}.

If $\HC_*(W)$ is asymptotically unbounded, that is, there exists a sequence $k_i \to \infty$ such that $\lim_{i\to\infty} \dim \HC_{k_i}(W) = \infty$ then it was proved in \cite{HM,McL} that every Reeb flow on $M$ carries infinitely many simple closed orbits. (These results were inspired by the corresponding result for geodesic flows due to Gromoll and Meyer \cite{GM}.) Thus, it is natural to ask what happens when $\HC_*(W)$ is \emph{not} asymptotically unbounded.

In this case, the situation is more delicate since we do have several examples of contact forms with finitely many simple closed orbits. Some works provide lower bounds for the number of simple closed orbits (some of these estimates are sharp), but, in general, these bounds request certain index conditions on the closed Reeb orbits; see \cite{AM2,DLLW,GG,GGM2,GK} and references therein.

The main result in this work establishes that a contact form $\alpha$ on $M$ has at least \emph{two} simple closed orbits provided that $\HC_*(W)$ satisfies a certain mild condition and assuming \emph{only} the non-degeneracy of $\alpha$. More precisely, we have the following statement. Throughout this paper, we will use the convention that the natural numbers are given by the positive integers. Moreover, the grading of $\HC_*(W)$ is given by the Conley-Zehnder index. Under the assumption that $c_1(TW)=0$, this grading is well defined up to a choice of the homotopy class of a non-vanishing section of the determinant line bundle $\Lambda_\C^{n+1}TW$; see \cite[Section 3]{AM3}, \cite{Es}, \cite{McL2} and \cite[Appendix C]{MR}. For a shorter notation, we will omit this dependence.

\begin{theorem}\label{thm:main}
Let $(M^{2n+1},\xi)$ be a closed contact manifold admitting a strong symplectic filling $W$ such that $c_1(TW)=0$. Let $\Gamma$ be a set of free homotopy classes of loops in $W$ closed under iterations and assume that there exist $K \in \N$ and a non-vanishing section $\sigma$ of the determinant line bundle $\Lambda_\C^{n+1}TW$ such that
\[
\dim \HC_{n}^\Gamma(W) < \dim \HC_{n+jK}^\Gamma(W)
\]
or
\[
\dim \HC_{-n}^\Gamma(W) < \dim \HC_{-n-jK}^\Gamma(W)
\]
for every $j \in \N$, where the grading in $\HC^\Gamma_*(W)$ is taken with respect to the homotopy class of $\sigma$. Then every non-degenerate Reeb flow on $M$ carries either infinitely many geometrically distinct closed Reeb orbits or at least two geometrically distinct closed Reeb orbits $\ga_1$ and $\ga_2$ such that their Conley-Zehnder indices satisfy $\cz(\ga_1^k)\neq \cz(\ga_2^k)$ for some $k \in \N$. Moreover, all these orbits have free homotopy class in $\Gamma$.
\end{theorem}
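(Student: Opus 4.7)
My plan is to argue by contradiction. Suppose that the Reeb flow of $\alpha$ has only finitely many simple closed orbits $\gamma_1,\dots,\gamma_r$ and that $\cz(\gamma_i^k)=\cz(\gamma_j^k)$ for every pair $i,j$ and every $k\in\N$; I aim to contradict the hypothesized dimension jump in $\HC_*^\Gamma(W)$.

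The starting point is the chain-complex description of $\HC_*^\Gamma(W)$ recalled in the introduction: a chain complex $\CC_*^\Gamma(\alpha)$ whose generators are the good iterated Reeb orbits $\gamma_i^p$ with $[\gamma_i^p]\in\Gamma$, graded by $\cz$. Under the contradictory assumption every good iterate of $\gamma_1$ is matched by one good iterate of each $\gamma_i$ at the same grading, so
\[
\dim \CC_k^\Gamma(\alpha) \;=\; r\,a_k,\qquad a_k \;:=\; \#\bigl\{p\in\N : \cz(\gamma_1^p)=k,\ \gamma_1^p\text{ good},\ [\gamma_1^p]\in\Gamma\bigr\},
\]
and hence $\dim \HC_k^\Gamma(W)\le r\,a_k$ in every degree. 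Now I would invoke the Salamon--Zehnder iteration inequality $\lvert\cz(\gamma_1^p)-p\Delta\rvert\le n$, where $\Delta:=\Delta(\gamma_1)$. When $\Delta=0$ we have $a_k=0$ for $\lvert k\rvert>n$, and both $\HC_{n+jK}^\Gamma(W)$ and $\HC_{-n-jK}^\Gamma(W)$ vanish for $j\ge 1$, contradicting either branch of the hypothesis at once. For $\Delta\neq 0$ the two branches are treated symmetrically, and by exchanging them if necessary I may assume $\Delta>0$ and that $\dim \HC_n^\Gamma(W)<\dim \HC_{n+jK}^\Gamma(W)$ for all $j\in\N$ (the opposite-sign branch being trivially contradicted because $a_{-n-jK}=0$ for $j$ large).

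For $\Delta>0$ the iteration inequality restricts iterates with $\cz(\gamma_1^p)\in[n+K,n+LK]$ to $p$ in an interval of length $\le (L-1)K/\Delta+2n/\Delta$, so summing the hypothesis over $j=1,\dots,L$ gives
\[
L\bigl(\dim \HC_n^\Gamma(W)+1\bigr) \;\le\; \sum_{j=1}^{L} \dim \HC_{n+jK}^\Gamma(W) \;\le\; r\bigl((L-1)K/\Delta+2n/\Delta+1\bigr),
\]
which after dividing by $L$ and letting $L\to\infty$ yields the averaged bound $\dim \HC_n^\Gamma(W)\le rK/\Delta-1$. To upgrade this to a pointwise contradiction I would invoke Long's Common Index Jump Theorem together with the quasi-periodicity of the iteration sequence: when $\Delta\in\Q$, writing $\Delta=p/q$, the iteration formula forces $\cz(\gamma_1^{m+q})=\cz(\gamma_1^m)+p$, so $\{a_k\}$ is periodic in $k$ and some $j$ satisfies $a_{n+jK}\le a_n$; when $\Delta\notin\Q$, simultaneous Diophantine approximation furnishes infinitely many $j$ with $a_{n+jK}=0$.

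The central difficulty is precisely this last step: converting a pointwise bound on the generator count at degree $n+jK$ into a pointwise comparison $\dim \HC_{n+jK}^\Gamma(W)\le \dim \HC_n^\Gamma(W)$, since $\dim \HC_n^\Gamma(W)$ is only bounded, not computed, by $r\,a_n$. I expect this to use the permutation symmetry among the $r$ simple orbits with identical iteration data, which promotes $\CC_*^\Gamma(\alpha)$ to an $S_r$-equivariant chain complex, together with a filtration or spectral-sequence argument in the spirit of \cite{GG,GGM2} controlling the Floer differential at the two specific degrees $n$ and $n+jK$.
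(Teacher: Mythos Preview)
Your central difficulty dissolves once you notice a parity observation you have overlooked. Under your contradictory assumption $\cz(\gamma_i^k)=\cz(\gamma_1^k)$ for all $i,k$; in particular $\cz(\gamma_i)=\cz(\gamma_1)$, so $\gamma_i^k$ is good if and only if $\gamma_1^k$ is, and every good generator of $\CC_*^\Gamma(\alpha)$ sits in a degree of the \emph{same} parity as $\cz(\gamma_1)$. The differential has degree $-1$, hence it vanishes, and you get the equality
\[
\dim \HC_k^\Gamma(W)\;=\;r\,a_k
\]
in every degree, not merely the inequality. This is the missing idea; with it, there is no need for the $S_r$-equivariance or spectral-sequence workaround you propose, and the problem becomes purely one about the single sequence $k\mapsto a_k$.

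Even after this fix, your endgame is not sound. The rational/irrational dichotomy on $\Delta$ does not do what you claim: rationality of the mean index does not force $\cz(\gamma_1^{m+q})=\cz(\gamma_1^m)+p$, since the Bott iteration formula involves the individual Krein angles, not just their sum; and in the irrational case there is no reason for $a_{n+jK}$ to vanish for any $j$. The paper instead applies the Common Index Jump Theorem (your reference to Long is in the right direction, but the use is different) to produce an even $d$ divisible by $K$ and an even $k_\gamma$ with
\[
\cz(\gamma_1^{k_\gamma\pm\ell})=d\pm\cz(\gamma_1^\ell)\quad(1\le\ell\le\ell_0),\qquad d-n\le\cz(\gamma_1^{k_\gamma})\le d,
\]
and then shows, via the elementary index-jump lemma $\cz(\gamma_1^{k+\ell})>\cz(\gamma_1^k)+2n$ for $\ell\ge\ell_0$, that every iterate with index $d+n$ is of the form $\gamma_1^{k_\gamma+\ell}$ with $1\le\ell<\ell_0$ and $\cz(\gamma_1^\ell)=n$. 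Since $k_\gamma$ is even, goodness of $\gamma_1^{k_\gamma+\ell}$ matches goodness of $\gamma_1^\ell$, so $\ell\mapsto k_\gamma+\ell$ gives an injection from contributors at degree $d+n$ to contributors at degree $n$, yielding $a_{d+n}\le a_n$ and hence $\dim\HC_{d+n}^\Gamma(W)\le\dim\HC_n^\Gamma(W)$, the desired contradiction with $j=d/K$.
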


\begin{remark}
\label{rmk:contractible}
When $\Gamma=\{0\}$, i.e. when looking at contractible orbits, it is enough to assume that $c_1(TW)|_{\pi_2(W)}=0$. Moreover, in this case the grading of $\HC^\Gamma_*(W)$ does not depend on the choice of the homotopy class of $\sigma$.
\end{remark}

\begin{remark}
Note that we are \emph{not} assuming that $W$ is a Liouville domain. When $\om$ is not exact, $\HC_*^\Gamma(W)$ is a vector space over the universal Novikov field and we have to use an action filtration introduced by McLean and Ritter in \cite{MR}; see Section \ref{sec:ESH}. Using this action filtration we also have that, given a non-degenerate contact form $\alpha$ on $M$, $\HC_*^\Gamma(W)$ is the homology of a chain complex generated by the good closed Reeb orbits of $\alpha$ with free homotopy class in $\Gamma$.
\end{remark}

Let $M \xrightarrow{\tau} \bM$ be a finite covering. Suppose that $M$ carries a contact structure $\xi$ preserved by the deck transformations so that $\bM$ has an induced contact structure $\bxi$ such that $\tau^*\bxi=\xi$. We say in this case that $\bM$ is a \emph{contact finite quotient} of $M$. Assume that $(M,\xi)$ satisfies the hypotheses of Theorem \ref{thm:main} and that $H^1(M,\R)=0$. Given a non-degenerate contact form $\balpha$ on $\bM$ we can consider its lift to $M$ and applying Theorem \ref{thm:main} we can deduce that $\balpha$ has  at least two simple closed orbits; see Section \ref{sec:quotient}. Thus, we have the following corollary.

\begin{corollary}
\label{cor:quotient}
Let $(M,\xi)$ be a contact manifold satisfying the hypotheses of Theorem \ref{thm:main} and $H^1(M,\R)=0$. Then every non-degenerate Reeb flow on a contact finite quotient of $M$ carries at least two simple closed orbits. Moreover, the closed lifts of iterates of these orbits to $M$ have free homotopy class in $W$ contained in $\Gamma$.
\end{corollary}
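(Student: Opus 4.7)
The plan is to lift $\balpha$ to a contact form on $M$ via $\tau$, apply Theorem~\ref{thm:main} there, and then project the resulting orbits to $\bM$. Set $\alpha := \tau^*\balpha$: because $\tau$ is a local diffeomorphism, $\alpha$ is a contact form with $\ker\alpha=\xi$ and Reeb vector field $\tau$-related to $R_{\balpha}$. Every closed Reeb orbit $\gamma$ of $\alpha$ projects onto an iterate $\bar\gamma^c$ of a simple Reeb orbit $\bar\gamma$ of $\balpha$, with $c\in\N$ the covering multiplicity of $\tau|_\gamma$, and its linearised Poincar\'e return map is intertwined by $d\tau$ with that of $\bar\gamma^c$. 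Non-degeneracy of $\balpha$ --- which by convention requires every iterate of every simple orbit to be non-degenerate --- therefore lifts to non-degeneracy of $\alpha$.

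Applying Theorem~\ref{thm:main} to $(M,\alpha)$ then produces either infinitely many geometrically distinct simple closed orbits of $\alpha$ or a pair $\gamma_1,\gamma_2$ of simple orbits of $\alpha$ with $\cz(\gamma_1^k)\neq\cz(\gamma_2^k)$ for some $k\in\N$, all with free homotopy class in $\Gamma$. The first case is immediate: each simple orbit of $\balpha$ has at most $\deg\tau$ simple lifts in $M$, one per cycle of the monodromy permutation on the fibre, so infinitely many simple orbits of $\alpha$ force infinitely many simple orbits of $\balpha$.

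For the second case, let $\bar\gamma_i$ be the simple orbit of $\balpha$ underlying $\tau\circ\gamma_i$. If $\bar\gamma_1\neq\bar\gamma_2$, these are the two required geometrically distinct simple orbits of $\balpha$ and we are done. The main obstacle is to exclude the alternative $\bar\gamma_1=\bar\gamma_2=:\bar\gamma$, in which $\gamma_i$ is a $c_i$-fold lift of $\bar\gamma$. Here I would exploit that the linearised return map of $\gamma_i^k$ is intertwined by $d\tau$ with $P_{\bar\gamma}^{k c_i}$, so that with trivialisations of $\xi$ along $\gamma_i$ pulled back from a single trivialisation of $\bxi$ along $\bar\gamma$ one has $\cz(\gamma_i^k)=\cz(\bar\gamma^{kc_i})$. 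Passing to the absolute grading coming from a trivialisation of $\Lambda^{n+1}_{\C} TW$ introduces a correction given by the winding difference between these trivialisations; the assumption $H^1(M,\R)=0$ enters precisely here to show that this correction agrees for $\gamma_1^k$ and $\gamma_2^k$, so that $c_1=c_2$ would force $\cz(\gamma_1^k)=\cz(\gamma_2^k)$, contradicting our choice. A finer analysis combining the deck-group action on the fibre of $\bar\gamma$ with the free-homotopy-class restriction to $\Gamma$ is then used to rule out the remaining case $c_1\neq c_2$ as well, yielding the required second simple orbit of $\balpha$. The free-homotopy-class assertion about the closed lifts of iterates of $\bar\gamma_1,\bar\gamma_2$ is inherited directly from the corresponding conclusion of Theorem~\ref{thm:main} applied to $\gamma_1,\gamma_2$.
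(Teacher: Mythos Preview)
Your overall strategy matches the paper's: lift $\balpha$ to $\alpha$, apply Theorem~\ref{thm:main} upstairs, and show that the two resulting orbits cannot project to the same simple orbit downstairs. The gap is in your final step. You reduce to $\bar\gamma_1=\bar\gamma_2=:\bar\gamma$ with covering multiplicities $c_1,c_2$, treat the case $c_1=c_2$, and then assert that ``a finer analysis combining the deck-group action on the fibre of $\bar\gamma$ with the free-homotopy-class restriction to $\Gamma$ is then used to rule out the remaining case $c_1\neq c_2$'' --- without supplying any such analysis. In fact no finer analysis is needed, because this case never occurs: by definition a contact finite quotient is $\bM=M/G$ for a finite group $G$ acting freely, so $\tau$ is a \emph{regular} covering. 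The monodromy of $\bar\gamma$ is then an element $\mu\in G$, the simple lifts of $\bar\gamma$ correspond to the cosets $G/\langle\mu\rangle$, and $G$ acts transitively on them; in particular all simple lifts have the same period $|\mu|\cdot T$, i.e.\ $c_1=c_2$ automatically. Missing this is what leaves you with an unhandled case.

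The paper exploits regularity more directly. Arguing by contradiction that $\balpha$ has a unique simple orbit, one immediately writes $\gamma_2=\psi\circ\gamma_1$ for some deck transformation $\psi$; then $\cz(\gamma_2^j,\sigma_\xi)=\cz(\gamma_1^j,\Psi^*\sigma_\xi)$, where $\Psi$ is the map induced by $\psi$ on $\Lambda^n_\C\xi$. The hypothesis $H^1(M,\R)=0$ forces $\Psi^*\sigma_\xi$ and $\sigma_\xi$ to be homotopic (classification of non-vanishing sections, \cite[Lemma~4.3]{McL2}), so $\cz(\gamma_1^j,\sigma_\xi)=\cz(\gamma_2^j,\sigma_\xi)$ for all $j$, contradicting the conclusion of Theorem~\ref{thm:main}. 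This is cleaner than routing the index comparison through a trivialisation of $\bxi$ along $\bar\gamma$ as you propose, though your version becomes correct once you insert the observation $\gamma_2=\psi\circ\gamma_1$: the pulled-back trivialisations are then related by $\psi$, and the winding corrections against $\sigma_\xi$ differ by precisely the obstruction that $H^1(M,\R)=0$ kills.
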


\begin{remark}
The hypothesis that $H^1(M,\R)=0$ can be dropped if the first Chern class of the contact structure on the quotient vanishes; see Remark \ref{rmk:quotient}.
\end{remark}

\begin{remark}
When $\Gamma=\{0\}$, the hypothesis that $H^1(M,\R)=0$ can be replaced by the assumption that the map $\pi_1(M) \to \pi_1(W)$ induced by the inclusion is injective.
\end{remark}

\begin{remark}
\label{rmk:quotient2}
Since the deck transformations do not necessarily extend to the filling $W$, it is not clear that when $\Gamma=\{0\}$ the assumption that $c_1(TW)|_{\pi_2(W)}=0$ is enough to conclude the previous corollary, unless the map $\pi_1(M) \to \pi_1(W)$ induced by the inclusion is injective; c.f. Remark \ref{rmk:contractible}.
\end{remark}

In what follows, we will provide several examples of contact manifolds satisfying the hypotheses of Theorem \ref{thm:main}. The simplest one is the standard contact sphere $S^{2n+1}$ with its obvious filling $W$ given by the ball $D^{2n+2}$ whose positive equivariant symplectic homology is
\[
\HC_j(W) \cong
\begin{cases}
\Q & \text{if $j= n+2k$ for some $k \in \N$} \\
0 & \text{otherwise.}
\end{cases} 
\]
The existence of at least two simple closed orbits for every non-degenerate contact form on the standard contact sphere is well known; see, for instance, \cite{Gur,Ka}. We will furnish a very large class of examples, for which the sphere is a \emph{very} particular case.

To the best of our knowledge, all the examples known so far of closed contact manifolds admitting contact forms with finitely many simple closed orbits are prequantization circle bundles over symplectic orbifolds admitting Hamiltonian circle actions with isolated fixed points. Our list below covers several examples of such manifolds; see Corollaries \ref{cor:toric}, \ref{cor:prequantization} and \ref{cor:brieskorn}. This raises the following question.

\vskip .2cm
\noindent {\bf Question:} Let $M$ be a prequantization circle bundle over a closed symplectic orbifold admitting a Hamiltonian circle action with isolated fixed points. Suppose that $M$ admits a strong symplectic filling $W$ such that $c_1(TW)=0$. Is it true that $W$ satisfies the hypothesis of Theorem \ref{thm:main} for some $\Gamma$ and $\sigma$?
\vskip .2cm

Note that there are examples of contact manifolds admitting fillings with vanishing first Chern class that do not satisfy the hypothesis of Theorem \ref{thm:main}, at least for a naturally chosen non-vanishing section of the determinant line bundle. For instance, consider the cosphere bundle $S^*N$ of an orientable closed manifold admitting a Riemannian metric $g$ with negative sectional curvature and take the symplectic filling $D^*N$ given by the unit disk bundle. We have that $c_1(TD^*N)=0$ because $N$ is orientable. Moreover, the choice of a volume form in the base naturally induces a non-vanishing section of $\Lambda^{n+1}_\C TD^*N$ so that the Conley-Zehnder index of every non-degenerate closed geodesic coincides with its Morse index; see \cite[Section 1.4.5]{Ab}. Since the geodesic flow of $g$ is non-degenerate and the index of every closed geodesic vanishes, we have that $\HC_*(D^*N)$ is non-trivial only in degree zero and therefore it does not satisfy the hypothesis of Theorem \ref{thm:main}. However, every contact form on $S^*N$ supporting the standard contact structure has infinitely many simple closed orbits; see \cite{MPa}.

\subsubsection*{\bf Displaceable contact manifolds}\hfill\\[-2ex]

Given a contact manifold $(M,\xi)$ and an exact symplectic manifold $(X,d\lambda)$, an embedding $M \hookrightarrow X$ is called an exact contact embedding if it is bounding and there exists a contact form $\alpha$ supporting $\xi$ such that $\alpha-\lambda|_M$ is exact. Here bounding means that $M$ separates $X$ into two connected components, with one of them relatively compact. This embedding is displaceable if $M$ can be displaced from itself by a Hamiltonian diffeomorphism with compact support on $X$. We say that $X$ is convex at infinity if there exists an exhaustion $X = \cup_k X_k$ by compact subsets $X_k \subset X_{k+1}$ with smooth boundaries such that $\lambda|_{\partial X_k}$ is a contact form for every $k$. A big class of contact manifolds admitting displaceable exact contact embeddings in exact symplectic manifolds that are convex at infinity is given in \cite{BC02}: the boundary of every subcritical Stein manifold.

Let $(M,\xi)$ be a contact manifold admitting a displaceable exact contact embedding into a convex at infinity exact symplectic manifold $X$ such that $c_1(TX)|_{\pi_2(X)}=0$ and denote by $W$ the compact region in $X$ bounded by $M$. In Section \ref{sec:displaceable} we show that $W$ satisfies the hypothesis of Theorem \ref{thm:main} for $\Gamma=\{0\}$. Hence, we get the following result; see Remarks \ref{rmk:contractible} and \ref{rmk:quotient2}.

\begin{corollary}
\label{cor:displaceable}
Let $(M,\xi)$ be a contact manifold admitting a displaceable exact contact embedding into a convex at infinity exact symplectic manifold $X$ such that $c_1(TX)|_{\pi_2(X)}=0$ and denote by $W$ the compact region in $X$ bounded by $M$. Then every non-degenerate Reeb flow on $M$ has at least two simple closed orbits contractible in $W$. If $c_1(TX)=0$ and $H^1(M,\R)=0$ then every Reeb flow on a contact finite quotient of $M$ carries at least two simple closed orbits. Moreover, the closed lifts of iterates of these orbits to $M$ are contractible in $W$.
\end{corollary}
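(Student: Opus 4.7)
The plan is to verify the numerical hypothesis of Theorem \ref{thm:main} for $\Gamma=\{0\}$ and then read off both assertions from Theorem \ref{thm:main}, Remark \ref{rmk:contractible}, Corollary \ref{cor:quotient}, and Remark \ref{rmk:quotient2}. Concretely, I would aim to establish
\[
\dim \HC_n^{\{0\}}(W) < \dim \HC_{n+2j}^{\{0\}}(W) \qquad \text{for every } j \in \N,
\]
so that the first inequality of Theorem \ref{thm:main} holds with $K=2$ (the restriction to contractible loops makes the grading canonical, per Remark \ref{rmk:contractible}).

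My first step would be to turn the displaceability hypothesis into the equivariant vanishing
\[
\SH^{S^1}_{*,\{0\}}(W)=0.
\]
This is the $S^1$-equivariant version of Viterbo's displacement principle: a compactly supported Hamiltonian on $X$ that displaces $M$ produces a chain null-homotopy of the identity of $\SH^{S^1}_{*,\{0\}}(W)$. The non-equivariant statement is due to Frauenfelder-Schlenk, and the equivariant refinement can be extracted from the Bourgeois-Oancea Gysin-type spectral sequence or reproduced directly by a parametrized null-homotopy argument. Plugging this vanishing into the tautological exact triangle
\[
\cdots \to \SH^{-,S^1}_{*,\{0\}}(W) \to \SH^{S^1}_{*,\{0\}}(W) \to \HC_*^{\{0\}}(W) \to \SH^{-,S^1}_{*-1,\{0\}}(W) \to \cdots
\]
gives $\HC_*^{\{0\}}(W) \cong \SH^{-,S^1}_{*-1,\{0\}}(W)$. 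The negative part is computed by the $S^1$-equivariant Morse homology of the action functional near the constant orbits, yielding $\SH^{-,S^1}_{*,\{0\}}(W) \cong H^{S^1}_{*+n+1}(W,\partial W;\Q)$, and hence
\[
\HC_*^{\{0\}}(W) \cong H^{S^1}_{*+n}(W,\partial W;\Q) \cong \bigoplus_{k\geq 0} H_{*+n-2k}(W,\partial W;\Q),
\]
where the last decomposition uses the trivial $S^1$-action on $W$.

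Since $W$ is a compact, connected, oriented $(2n+2)$-manifold with non-empty boundary, Poincar\'e-Lefschetz duality yields $H_{2n+2}(W,\partial W;\Q) \cong H^0(W;\Q) \cong \Q$. For any $j\geq 1$, this copy of $\Q$ appears as the $k=j-1$ summand of $\HC_{n+2j}^{\{0\}}(W) \cong H^{S^1}_{2n+2j}(W,\partial W;\Q)$, but it is absent from $\HC_n^{\{0\}}(W) \cong H^{S^1}_{2n}(W,\partial W;\Q)$, giving the desired strict inequality. Theorem \ref{thm:main} together with Remark \ref{rmk:contractible} then delivers the first claim. For the second claim, $c_1(TX)=0$ forces $c_1(TW)=0$, so Corollary \ref{cor:quotient} applies under the additional assumption $H^1(M,\R)=0$, and Remark \ref{rmk:quotient2} ensures that the lifts of iterates remain contractible in $W$. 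The main analytic obstacle is the first step, the equivariant vanishing from displacement; everything else is a formal homological-algebra calculation combined with Poincar\'e-Lefschetz duality.
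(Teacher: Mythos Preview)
Your proposal is correct and follows essentially the same approach as the paper's Proposition~\ref{prop:HC displaceable}: use displacement to kill $\SH^{S^1}_*(W)$, feed this into the tautological long exact sequence to identify $\HC^{\{0\}}_*(W)$ with $H^{S^1}_{*+n}(W,\partial W;\Q)$, and then use $H_{2n+2}(W,\partial W;\Q)\cong\Q$ to obtain the strict inequality with $K=2$. The paper in fact records the sharper statement $\dim\HC^{\{0\}}_{n+2j}(W)=\dim\HC^{\{0\}}_n(W)+1$; your phrasing ``this copy of $\Q$ appears\ldots but is absent'' implicitly uses that $H_i(W,\partial W;\Q)=0$ for $i>2n+2$, so that the remaining summands of $\HC^{\{0\}}_{n+2j}(W)$ coincide with those of $\HC^{\{0\}}_n(W)$---you should make that explicit, but otherwise the argument is the same.
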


\subsubsection*{\bf Cosphere bundles and closed geodesics}\hfill\\[-2ex]

Let $N$ be a closed Riemannian manifold and $\LL N$ its free loop space. There is an isomorphism between the (non-equivariant) symplectic homology of $T^* N$ and the homology of $\LL N$ twisted by a local system of coefficients. For the $S^1$-equivariant version, if $N$ is orientable and spin, we have the isomorphism
\begin{equation}
\label{eq:CL}
\HC_{*}(D^* N) \cong H_*(\LL N/S^1, N;\Q),
\end{equation}
where $N \subset \LL N/S^1$ indicates the subset of constant loops, $D^*N$ is the obvious filling of the cosphere bundle $S^*N$ given by the unit disk bundle and we are taking the grading of $\HC_{*}(D^* N)$ given by a non-vanishing section of $\Lambda^{n+1}_\C TD^*N$ induced from the choice of a volume form in the base so that the Conley-Zehnder index of a non-degenerate closed geodesic coincides with its Morse index, see e.g.~\cite{BO17}. This isomorphism respects the filtration given by the free homotopy classes, that is,
\begin{equation}
\label{eq:CL filtered}
\HC_{*}^\Gamma(D^*N) \cong H_*(\LL^\Gamma N/S^1, N;\Q)
\end{equation}
for every set $\Gamma$ of free homotopy classes in $D^*N$, where $\LL^\Gamma N$ denotes the set of loops in $N$ with free homotopy class in $\Gamma$. (Note that, since $\pi_1(D^*N) \cong \pi_1(N)$, the set of free homotopy classes in $D^*N$ and $N$ are naturally identified. Moreover, $N\subset \LL^0 N/S^1$ and therefore if $\Gamma$ does not contain the trivial free homotopy class then the right hand side of the isomorphism \eqref{eq:CL filtered} has to be understood as $H_*(\LL^\Gamma N/S^1;\Q)$.) For general $N$, it is expected that the same isomorphism holds with a local system of coefficients as in the non-equivariant case but a rigorous proof has not been written in the literature yet. 

It turns out that if $N$ is simply connected and $H_*(\LL N/S^1, N;\Q)$ is not asymptotically unbounded then it satisfies the assumption in Theorem \ref{thm:main}. Using this, we can prove the following result. The proof is given in Section \ref{sec:cosphere}. Before we state it, let us recall a definition and introduce a notation. A topological space $X$ is $k$-simple if $\pi_1(X)$ acts trivially on $\pi_k(X)$. If a closed manifold $N$  has dimension bigger than one and $\pi_1(N) \cong \Z$ then $N$ is not rationally aspherical, that is, there exists $j>1$ such that $\pi_j(N) \otimes \Q \neq 0$; see Section \ref{sec:cosphere}. Let $k$ be the smallest such $j$. In what follows, $\xi_{can}$ denotes the canonical contact structure on $S^*N$.

\begin{corollary}
\label{cor:cosphere}
Let $N$ be a closed oriented spin manifold with dimension bigger than one. Suppose that $N$ satisfies one of the following conditions:
\begin{itemize}
\item[(i)] $\pi_1(N)$ is finite;
\item[(ii)] $\pi_1(N) \cong \Z$, $\pi_2(N)=0$ and $N$ is $k$-simple, with $k$ as discussed above;
\item[(iii)] $\pi_1(N)$ is infinite and there is no $a\in\pi_1(N)$ such that every non-zero $b\in\pi_1(N)$ is conjugate to some power of $a$.
\end{itemize}
In case (i), we have that every non-degenerate contact form on $(S^* N,\xi_{can})$ has at least two simple closed orbits. Under hypothesis (ii) or (iii), we have two simple closed orbits for any contact form on $(S^* N,\xi_{can})$, without assuming that it is non-degenerate.
\end{corollary}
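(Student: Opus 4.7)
The plan is to use the isomorphism \eqref{eq:CL filtered} to reduce the hypothesis of Theorem~\ref{thm:main} to a purely topological condition on $H_*(\LL^\Gamma N/S^1, N;\Q)$, combined with the principle from \cite{HM,McL} that asymptotic unboundedness of $\HC_*$ already yields infinitely many simple closed Reeb orbits. Throughout we take the filling $W=D^*N$.

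For case (i) set $\Gamma=\{0\}$; the required condition on $c_1$ is automatic by Remark~\ref{rmk:contractible}. If $\pi_1(N)$ is non-trivial we pass to the finite universal cover $\NN$: contractible loops in $N$ correspond to $\pi_1(N)$-orbits of loops in $\NN$, and since $|\pi_1(N)|<\infty$ the rational Betti numbers of $\LL^0 N/S^1$ and of $\LL\NN/S^1$ grow at comparable rates. The classical Sullivan--Vigu\'e-Poirrier dichotomy applied to the simply connected $\NN$ then splits into two sub-cases: either $\dim H_k(\LL\NN;\Q)$ is unbounded and \cite{HM,McL} supply infinitely many closed orbits, or $\NN$ has the rational homotopy type of a compact rank-one symmetric space ($S^m, \CP^m, \HP^m$, or the Cayley projective plane). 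In the latter case the $S^1$-equivariant loop space homology is the standard classical one, concentrated in an arithmetic progression of degrees with common difference $K$ and non-zero at every entry beyond some point, and the inequality in Theorem~\ref{thm:main} is verified directly.

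For cases (ii) and (iii) we drop non-degeneracy, so Theorem~\ref{thm:main} cannot be applied directly; we use instead that $\HC^\Gamma_*(D^*N)\neq 0$ implies the existence of a closed Reeb orbit with free homotopy class in $\Gamma$ for \emph{any} contact form on $(S^*N,\xi_{can})$. For non-degenerate forms this is the chain-level description of \cite{GG,GGM2,GU}; for degenerate forms it follows by approximation by non-degenerate contact forms and a compactness argument on the resulting sequence of orbits. By \eqref{eq:CL filtered}, any non-trivial free homotopy class $c$ in $N$ carries a closed Reeb orbit, because $\LL^c N/S^1\neq\emptyset$ already forces $\HC^c_*(D^*N)\neq 0$. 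In case (iii), if there were only one simple closed orbit $\ga$ with free homotopy class $a$, every closed orbit would be an iterate of $\ga$ and realize a conjugacy class of the form $a^k$, so every non-zero $b\in\pi_1(N)$ would be conjugate to a power of $a$, contradicting hypothesis (iii).

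In case (ii), applying the above to any non-trivial element of $\pi_1(N)=\Z$ yields a non-contractible closed Reeb orbit; separately, $\pi_2(N)=0$ together with $k$-simplicity and the rational Hurewicz theorem applied to $\NN$ forces $H_{k-1}(\LL\NN;\Q)\neq 0$, and this non-triviality propagates through the $\Z$-quotient to $\LL^0 N$ and then through the $S^1$-quotient to give $\HC^0_*(D^*N)\neq 0$ in positive degree, hence a contractible closed Reeb orbit; a contractible orbit and a non-contractible orbit cannot be iterates of a common simple orbit. The main obstacle is the existence statement for degenerate contact forms used in cases (ii) and (iii): passing from $\HC^\Gamma_*(D^*N)\neq 0$ to an actual closed Reeb orbit in a given free homotopy class requires a careful perturbation-plus-compactness argument on sequences of orbits of nearby non-degenerate contact forms.
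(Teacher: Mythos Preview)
Your overall strategy matches the paper's: case (i) via the Sullivan--Vigu\'e-Poirrier dichotomy on the universal cover together with Rademacher's explicit Poincar\'e series, and cases (ii) and (iii) by producing closed orbits in two free homotopy classes that cannot be iterates of a common simple orbit. Case (iii) is essentially identical to the paper's argument.

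There is, however, a real gap in case (ii). Your claim that non-triviality of $H_{k-1}(\LL\NN;\Q)$ ``propagates through the $\Z$-quotient to $\LL^0 N$'' is precisely the hard step, and you give no argument for it. The covering $\LL\NN\to\LL^0 N$ is an \emph{infinite} $\Z$-covering, so there is no transfer and homology can genuinely change in the quotient. Moreover, you invoke $k$-simplicity in the wrong place: it is not needed to get $H_{k-1}(\LL\NN;\Q)\neq 0$ (the cover $\NN$ is simply connected, so that is pure rational Hurewicz); it is needed exactly for the descent, since it says the $\pi_1(N)$-action on $\pi_k(N)\otimes\Q\cong\pi_{k-1}(\LL^0 N,N)\otimes\Q$ is trivial, so that factoring out this action does not kill the group. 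The paper does not pass to the universal cover at all here. Instead it proves directly a rational relative Hurewicz theorem for the non-simply-connected pair $(\LL^0 N,N)$ (Propositions~\ref{prop:relhom} and~\ref{prop:hurewicz}), identifying $H_{k-1}(\LL^0 N,N;\Q)$ with $\pi'_{k-1}(\LL^0 N,N)\otimes\Q$, the quotient by the $\pi_1(N)$-action; this is where $\pi_2(N)=0$ and $k$-simplicity are actually consumed, and the argument is substantial (it passes through the homotopy fiber $F\simeq\Om^2 N$ of $N\hookrightarrow\LL^0 N$ and a Serre spectral sequence). Your one-line ``propagates'' hides all of this.

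A smaller point in case (i): in the bounded sub-case you verify the hypothesis of Theorem~\ref{thm:main} for $D^*\NN$, but you must then get back to $S^*N$. Your remark about Betti numbers of $\LL^0 N/S^1$ and $\LL\NN/S^1$ growing at comparable rates addresses only the unbounded sub-case; for the bounded sub-case the paper applies Theorem~\ref{thm:main} to $S^*\NN$ and then invokes Corollary~\ref{cor:quotient} (using that $\pi_1(S^*\NN)$ is finite, hence $H^1(S^*\NN;\R)=0$) to descend to the contact finite quotient $S^*N$. You should make that step explicit.
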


\begin{remark}
\label{rmk:spin}
The hypothesis that $N$ is oriented spin is used only to have the isomorphism \eqref{eq:CL filtered}. Possibly, it can be relaxed once we have this isomorphism with the relative homology of $(\LL N/S^1,N)$ twisted by a local system of coefficients.
\end{remark}

\begin{remark}
\label{rmk:hyp simple}
In case (ii), the hypothesis that $N$ is $k$-simple can be relaxed in the following way: let $a$ be a generator of $\pi_1(N)$ and denote by $A$ the linear map corresponding to the action of $a$ on $\pi_k(N) \otimes \Q$. Then it is enough that $\ker(A-\Id)\neq 0$; see Remark \ref{rmk:simple}. This hypothesis and the assumption that $\pi_2(N)=0$ are probably just technical but we do not know how to drop them so far.
\end{remark}

Theorem \ref{thm:main} is used to prove Corollary \ref{cor:cosphere} only under hypothesis (i). For hypotheses (ii) and (iii), we show the existence of two periodic orbits $\ga_1$ and $\ga_2$ such that no iterate of $\ga_1$ is freely homotopic to $\ga_2$. This is easy in case (iii) using \eqref{eq:CL filtered} but highly non-trivial in case (ii) where we show that $\HC^0_*(D^*N)\neq 0$ and $\HC^a_*(D^*N)\neq 0$ for some non-trivial homotopy class $a$. The proof in case (ii) actually shows the following result; see Remark \ref{rmk:LF}. It can be considered as a sort of Lusternik-Fet theorem for Reeb flows; see e.g. \cite{AB}.

\begin{theorem}
Let $N$ be a closed not rationally aspherical manifold. Suppose that $N$ is oriented spin, $\pi_1(N)$ is abelian, $\pi_2(N)=0$ and $N$ is $k$-simple, with $k$ as discussed above. Then every (possibly degenerate) Reeb flow on $S^*N$ carries a \emph{contractible} closed orbit. As a consequence, if, furthermore, $\pi_1(N)$ is infinite, then every Reeb flow on $S^*N$ has at least two simple closed orbits.
\end{theorem}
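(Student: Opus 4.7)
The plan is to use the isomorphism \eqref{eq:CL filtered} with $\Gamma=\{0\}$ to reduce the first statement to the assertion $H_{*}(\LL^0 N/S^1, N;\Q)\neq 0$, and then to handle the possibly degenerate case by a limit argument. Namely, for a given contact form $\alpha$ on $S^*N$, one approximates by non-degenerate $\alpha_i = f_i\alpha$ with $f_i\to 1$; non-vanishing of $\HC^0_*(D^*N)$ furnishes, for each $i$, a non-zero class represented by a cycle in some fixed action window $(0,T)$, stable under small continuations. This produces contractible closed Reeb orbits $\gamma_i$ of $\alpha_i$ with periods bounded uniformly in $i$, so by Arzel\`a--Ascoli a subsequential limit is a closed Reeb orbit of $\alpha$; contractibility is a closed condition on loops of bounded length in $S^*N$.

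The heart of the argument is the computation $H_{k-1}(\LL^0 N/S^1, N;\Q)\neq 0$. Let $\widetilde N\to N$ be the universal cover; $\pi_1(N)$ abelian implies $\widetilde N$ is simply connected, and combined with $\pi_2(N)=0$ and the minimality of $k$, $\widetilde N$ is rationally $(k-1)$-connected with $\pi_k(\widetilde N)\otimes\Q\neq 0$. Rational Hurewicz then gives $H_{k-1}(\Omega\widetilde N;\Q)\cong \pi_k(\widetilde N)\otimes\Q\neq 0$ and $H_q(\Omega\widetilde N;\Q)=0$ for $0<q<k-1$. Consider the evaluation fibration $\Omega\widetilde N\hookrightarrow \LL^0 N\xrightarrow{\mathit{ev}} N$ (whose fiber identifies with $\Omega\widetilde N$ because contractible based loops in $N$ lift uniquely to based loops in $\widetilde N$) together with its section by constants. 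The monodromy of this fibration on $\pi_{k-1}(\Omega\widetilde N)$ coincides with the $\pi_1(N)$-action on $\pi_k(N)$, which is trivial by $k$-simplicity; by naturality of the Hurewicz map, the induced monodromy on $H_{k-1}(\Omega\widetilde N;\Q)$ is trivial. Thus in the Serre spectral sequence, $E^2_{0,k-1}=H_{k-1}(\Omega\widetilde N;\Q)\neq 0$; incoming differentials $d_r\colon E^r_{r,k-r}\to E^r_{0,k-1}$ vanish for $2\leq r\leq k-1$ by the connectivity of $\Omega\widetilde N$, and the transgression $d_k\colon H_k(N;\Q)\to H_{k-1}(\Omega\widetilde N;\Q)$ is killed by the section. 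Hence $E^\infty_{0,k-1}\neq 0$, producing a non-zero class in $H_{k-1}(\LL^0 N, N;\Q)$; the same connectivity gives $H_q(\LL^0 N, N;\Q)=0$ for $q<k-1$. Passing to the $S^1$-quotient, use the Borel spectral sequence with $\Q$ coefficients and the identification $H^{S^1}_*(\LL^0 N, N;\Q)\cong H_*(\LL^0 N/S^1, N;\Q)$ (valid since the $S^1$-stabilizers on $\LL^0 N\setminus N$ are finite): no incoming differential hits $E^2_{0,k-1}$, so it survives to the limit, giving $\HC^0_{k-1}(D^*N)\neq 0$.

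For the consequence when $\pi_1(N)$ is additionally infinite, pick $a\in\pi_1(N)$ of infinite order (available since $\pi_1(N)$ is finitely generated abelian of positive rank) and let $[a]$ be its free homotopy class. Then $\LL^{[a]}N\neq\emptyset$, so $H_0(\LL^{[a]}N/S^1;\Q)\neq 0$ and \eqref{eq:CL filtered} yields $\HC^{[a]}_0(D^*N)\neq 0$. The same approximation procedure produces a closed Reeb orbit of $\alpha$ in free homotopy class $[a]$; its underlying simple orbit is non-contractible (any iterate of a contractible orbit is contractible) and is therefore geometrically distinct from the contractible orbit produced in the first part.

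The main obstacle I expect is the rigorous bookkeeping of local coefficients in the Serre spectral sequence: the $k$-simplicity hypothesis must be used precisely to conclude that the class in $H_{k-1}(\Omega\widetilde N;\Q)$ is $\pi_1(N)$-invariant, which requires checking that the Hurewicz isomorphism is equivariant for the respective $\pi_1(N)$-actions (this is why $\pi_2(N)=0$ and the minimality of $k$ enter, so that Hurewicz is an isomorphism in the relevant range). A secondary technical point is the clean $\Q$-coefficient identification of equivariant homology with the homology of the $S^1$-quotient in the presence of the finite stabilizers coming from iterated orbits.
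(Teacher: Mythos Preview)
Your argument is correct and reaches the same conclusion, but the route differs from the paper's in a few places worth noting.

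For the key computation $H_{k-1}(\LL^0 N,N;\Q)\neq 0$, the paper works with the homotopy fiber $F\simeq\Omega^2N$ of the inclusion $N\hookrightarrow\LL^0N$ and runs the Serre spectral sequence of $F\to N'\to\LL^0N$, first proving a rational Hurewicz theorem for $F$ (which requires an H-space argument to control the deck-transformation action on $H_*(\widetilde F;\Q)$) and then a rational relative Hurewicz theorem for the pair $(\LL^0N,N)$ identifying $H_{k-1}(\LL^0N,N;\Q)\cong\pi'_{k-1}(\LL^0N,N)\otimes\Q$. Your approach via the evaluation fibration $\Omega\widetilde N\to\LL^0N\to N$ and its constant-loop section is more direct and closer to the classical Lusternik--Fet argument: the section kills the transgression cleanly, and $k$-simplicity is invoked exactly once, to trivialize the monodromy on $H_{k-1}(\Omega\widetilde N;\Q)$ via the $\pi_1$-equivariance of the rational Hurewicz isomorphism. (Your parenthetical ``$\pi_1(N)$ abelian implies $\widetilde N$ is simply connected'' is a slip---$\widetilde N$ is always simply connected; the abelian hypothesis is only needed for the second orbit, to produce an element of infinite order.) For the passage to the $S^1$-quotient the paper uses the Gysin exact triangle to show $H_*(\LL^0N,N;\Q)\neq 0\Rightarrow H_*^{S^1}(\LL^0N,N;\Q)\neq 0$, while you use the Borel spectral sequence together with the vanishing $H_q(\LL^0N,N;\Q)=0$ for $q<k-1$; both work, and yours pins down the degree. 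Finally, for degenerate $\alpha$ the paper argues directly that the absence of contractible closed orbits forces $\HC^0_*(D^*N)=0$ (since an admissible Hamiltonian built from such $\alpha$ has no non-constant contractible $1$-orbits, and small time-dependent perturbations preserve this), which is cleaner than your Arzel\`a--Ascoli limit, though the latter is also valid once the uniform action bound is justified via the filtered invariance of $\HC^{0,(0,T)}_*$.
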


\begin{remark}
Similarly to Corollary \ref{cor:cosphere}, the hypothesis that $N$ is $k$-simple can be weakened; see Remark \ref{rmk:LF}.
\end{remark}

The hypothesis that $N$ is oriented spin and the second and third conditions in item (ii) can be dropped when we restrict ourselves to Reeb flows given by geodesic flows of Finsler metrics as follows. The proof of item (i) in Theorem \ref{thm:main} uses only the fact that, given a non-degenerate contact form $\alpha$ on $M$, $\HC_*^\Gamma(W)$ is the homology of a chain complex generated by the good periodic orbits of $\alpha$ with free homotopy class in $\Gamma$ graded by the Conley-Zehnder index; the nature of the differential is absolutely unessential. Let $F$ be a Finsler metric on $N$. It is well known that the closed geodesics of $F$ are the critical points of the corresponding energy functional defined on the free loop space. We will say that $F$ \emph{has only one prime closed geodesic} if either the corresponding geodesic flow has only one simple closed orbit or $F$ is reversible (i.e. $F(x,v)=F(x,-v)$ for every $(x,v) \in TN$) and its geodesic flow has only two simple periodic orbits (given by the lifts of a closed geodesic $\ga(t)$ and its reversed geodesic $\ga(-t)$).

It turns out that if $F$ is bumpy (i.e. its geodesic flow is non-degenerate) and has only one prime closed geodesic then $H_*(\LL N/S^1, N;\Q)$ is the homology of the chain complex generated by the good periodic orbits of the geodesic flow of $F$ with trivial differential. Using this fact we can prove the following result. In what follows, we say that $F$ \emph{has at least two prime closed geodesics} if it does not have only one prime closed geodesic in the sense above. (Note that every Finsler metric has at least one prime closed geodesic.) 

\begin{corollary}
\label{cor:geodesics}
Let $N$ be a closed manifold with dimension bigger than one. Suppose that $N$ satisfies one of the following conditions:
\begin{itemize}
\item[(i)] $\pi_1(N)$ is finite;
\item[(ii)] $\pi_1(N) \cong \Z$;
\item[(iii)] $\pi_1(N)$ is infinite and there is no $a\in\pi_1(N)$ such that every non-zero $b\in\pi_1(N)$ is conjugate to some power of $a$.
\end{itemize}
In case (i), we have that every bumpy Finsler metric $F$ on $N$ has at least two prime closed geodesics. Under hypothesis (ii) or (iii), we have two prime closed geodesics for any Finsler metric $F$ on $N$, without assuming that it is bumpy.
\end{corollary}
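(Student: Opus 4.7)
The plan is to mirror the strategy of Corollary \ref{cor:cosphere}, with the rational relative loop-space homology $H_*(\LL N/S^1, N;\Q)$ playing the role of the positive $S^{1}$-equivariant symplectic homology $\HC_*(D^*N)$. As observed in the paragraph preceding the statement, for a bumpy Finsler metric $F$ with only one prime closed geodesic, the chain complex generated by the good iterates of that geodesic (together with its time-reversal in the reversible case) has vanishing differential and already computes $H_*(\LL N/S^1, N;\Q)$, graded by the Morse index, which coincides with the Conley--Zehnder index under the convention recalled before \eqref{eq:CL}. Because the proof of Theorem \ref{thm:main} uses only this chain-level structure and not the isomorphism \eqref{eq:CL filtered} itself, neither the spin hypothesis nor the auxiliary conditions on $\pi_2(N)$ and $k$-simplicity from Corollary \ref{cor:cosphere} need be imposed here.

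For case (i), I would argue by contradiction and assume that the bumpy Finsler metric $F$ has only one prime closed geodesic. In the non-reversible case, the Reeb flow on $(S^*N,\xi_{can})$ then carries a single prime closed orbit, which contradicts the conclusion of Theorem \ref{thm:main} directly. In the reversible case, the Reeb flow has exactly two prime closed orbits $\gamma$ and $\bar\gamma$, interchanged by the time-reversal involution; this symmetry forces $\cz(\gamma^k)=\cz(\bar\gamma^k)$ for every $k\in\N$, and again contradicts the conclusion of Theorem \ref{thm:main}. Applicability of the theorem rests on checking the strict-inequality growth condition for $H_*(\LL N/S^1,N;\Q)$, with $\Gamma$ the set of all free homotopy classes in $N$; when $\pi_1(N)$ is finite, this is verified exactly as in the proof of Corollary \ref{cor:cosphere}(i), using the rational homotopy of the free loop space of the simply connected finite cover of $N$.

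Case (iii) I would settle by the classical Lusternik--Fet theorem for Finsler metrics on closed manifolds: every non-trivial free homotopy class in $N$ contains a closed geodesic that minimises the energy on its component of $\LL N$. The combinatorial hypothesis on $\pi_1(N)$ provides two classes $[a]$ and $[b]$ with $[b]$ not conjugate to any power of $[a]$, whence the resulting minimisers are prime and geometrically distinct, since every iterate of a prime closed geodesic represents a class conjugate to a power of its generator.

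Case (ii) will be the main obstacle, because $\pi_1(N)\cong\Z$ only yields a single non-trivial class up to conjugacy and powers, so Lusternik--Fet produces just one prime closed geodesic. The plan is to construct in addition a \emph{contractible} closed geodesic, which is automatically prime and geometrically distinct from the first one, since a contractible loop cannot be a non-zero iterate of a generator of $\pi_1(N)$. Since $\dim N>1$ and $\pi_1(N)\cong\Z$, $N$ is not rationally aspherical (as reviewed in Section \ref{sec:cosphere}), so the Sullivan model for the contractible component of the free loop space yields $H_*(\LL^0 N, N;\Q)\neq 0$. If no contractible closed geodesic existed, standard Lusternik--Schnirelmann minimax theory applied to the Finsler energy on $\LL^0 N$ (Palais--Smale, deformation lemma, $S^1$-invariance) would force a deformation retraction of $\LL^0 N$ onto the space of constant loops $N$, contradicting this non-vanishing; no spin, bumpiness, $\pi_2(N)=0$ or $k$-simplicity assumption is needed for this step.
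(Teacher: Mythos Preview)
Cases (i) and (iii) follow the paper's line, with two imprecisions worth flagging. In case (i), the growth inequality you invoke is verified (via Rademacher's Poincar\'e series) for $H_*(\LL \NN/S^1,\NN;\Q)$ with $\NN$ the universal cover, not for $H_*(\LL N/S^1,N;\Q)$; the paper accordingly first reduces to the simply connected case, noting it suffices to show there that a bumpy metric has either infinitely many prime geodesics or two with differing index sequences, and the descent to finite $\pi_1(N)$ then goes as in Corollary~\ref{cor:quotient}. In case (iii), minimisers in a free homotopy class need not be prime; what you actually need, and what the conjugacy hypothesis gives, is that the two minimisers cannot share the same underlying prime geodesic.

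Case (ii) has a genuine gap. Your claim that a Sullivan model yields $H_*(\LL^0 N,N;\Q)\neq 0$ is not justified: Sullivan's free loop space models apply to simply connected or nilpotent spaces, and $N$ with $\pi_1(N)\cong\Z$ is nilpotent only when $\pi_1$ acts nilpotently on every $\pi_j(N)$, which is not assumed. Establishing $H_*(\LL^0 N,N;\Q)\neq 0$ in this generality is precisely what forces the extra hypotheses $\pi_2(N)=0$ and $k$-simplicity in Corollary~\ref{cor:cosphere}(ii); the paper explicitly remarks it does not know how to remove them. For geodesic flows the paper sidesteps this entirely by working with integral homotopy: since $\dim N>1$ and $\pi_1(N)\cong\Z$, $N$ is not a $K(\Z,1)$, so $\pi_k(N)\neq 0$ for some smallest $k\geq 2$; the isomorphism $\pi_{k-1}(\LL^0 N,N)\cong\pi_k(N)$ (Proposition~\ref{prop:relhom}, or more generally a standard fact from Klingenberg) supplies a nontrivial relative homotopy class, and classical Lusternik--Fet minimax on it produces a contractible closed geodesic directly, with no rational homology, Sullivan model, bumpiness, or auxiliary hypothesis required.
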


\begin{remark}
Our contribution in this corollary is that we find two closed geodesics when $N$ has finite fundamental group and $F$ is bumpy. The remaining cases can be covered by classical minimax methods. This is in contrast with Corollary \ref{cor:cosphere} for which these minimax methods are not available, making the proof of item (ii) much harder than in the case of geodesic flows.
\end{remark}

\begin{remark}
We are not aware of any example of $N$ which is excluded in the statement, see \cite[Section 5]{Tai}. For instance, if $\pi_1(N)$ is abelian, $N$ meets the hypothesis in Corollary \ref{cor:geodesics}.
\end{remark}

\subsubsection*{\bf Good toric contact manifolds}\hfill\\[-2ex]

Toric contact manifolds are the odd dimensional analogues of toric symplectic manifolds. They can be defined as contact manifolds of dimension $2n+1$ equipped with an effective Hamiltonian action of a torus of dimension $n+1$. Good toric contact manifolds of dimension three are $(S^3, \xi_{\rm st})$ and its finite quotients. Good toric contact manifolds of dimension greater than three are compact toric contact manifolds whose torus action is not free. These form the most important class of compact toric contact manifolds and can be classified by the associated moment cones, in the same way that Delzant's theorem classifies compact toric symplectic manifolds by the associated moment polytopes. We refer to \cite{Le} for details.

In \cite{AM1} we showed that on any good toric contact manifold $(M^{2n+1},\xi)$ such that $c_1(\xi)=0$, any non-degenerate toric contact form is even, that is, all contractible closed orbits of its Reeb flow have even contact homology degree, where the contact homology degree of a closed orbit $\ga$ is given by $\cz(\ga)+n-2$. (As proved in \cite{AM3}, this is also true for the non-contractible closed Reeb orbits.) Suppose that $M$ admits a symplectic filling $W$ with vanishing first Chern class. Then, as showed in \cite{AM1,AM3}, $\HC_*^0(W)$ can be computed in a purely combinatorial way in terms of the associated momentum cone. Using this computation, we show in Section \ref{sec:toric} that $W$ satisfies the hypothesis of Theorem \ref{thm:main} for $\Gamma=\{0\}$ and consequently we get the following result. Note that the fundamental group of every good toric contact manifold $M$ is finite and consequently $H^1(M,\R)=0$.

\begin{corollary}
\label{cor:toric}
Let $(M,\xi)$ be a good toric contact manifold admitting a strong symplectic filling $W$ such that $c_1(TW)=0$. Then every non-degenerate contact form on a contact finite quotient of $M$ carries at least two geometrically distinct contractible closed orbits.
\end{corollary}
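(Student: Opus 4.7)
The strategy is to verify the hypotheses of Theorem \ref{thm:main} with $\Gamma=\{0\}$ and then invoke Corollary \ref{cor:quotient}. Since good toric contact manifolds have finite fundamental group, $H^1(M,\R)=0$ is automatic, so all that remains is to exhibit a $K \in \N$ and a non-vanishing section $\sigma$ of the determinant line bundle making the growth condition hold for $\HC^0_*(W)$.

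The key input is the evenness property proved in \cite{AM1,AM3}: for any non-degenerate toric contact form $\alpha$ on $M$ and any closed Reeb orbit $\gamma$, the contact homology degree $\cz(\gamma)+n-2$ is even. Hence the Conley-Zehnder indices of all good contractible closed orbits of $\alpha$ share a common parity, namely that of $n$, and since the differential in the $S^1$-equivariant positive chain complex lowers the grading by one, it must vanish identically on the contractible part. Consequently $\HC^0_k(W)$ is canonically isomorphic to the $\Q$-vector space spanned by the good contractible closed orbits of $\alpha$ with $\cz=k$, and it is supported in degrees of parity $n \pmod 2$.

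With this at hand, I would appeal to the combinatorial computation of $\HC^0_*(W)$ from \cite{AM1,AM3}, which expresses $\dim \HC^0_k(W)$ as an explicit combinatorial count read off from the moment cone. The iterates of the principal simple orbits attached to the facets of the cone furnish good contractible Reeb orbits whose Conley-Zehnder indices tend to $+\infty$ along $n+2\N$, and a lattice-point/volume argument on the cone yields $\dim \HC^0_k(W)\to\infty$ along this progression. In particular there is some $k_0$ beyond which $\dim \HC^0_k(W)>\dim \HC^0_n(W)$ for every $k\geq k_0$ of the right parity, so any even $K\geq k_0-n$ makes the condition $\dim \HC^0_n(W)<\dim \HC^0_{n+jK}(W)$ hold for every $j\in\N$. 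Theorem \ref{thm:main} then produces either infinitely many geometrically distinct simple closed orbits with free homotopy class in $\{0\}$, or a pair of such orbits $\gamma_1,\gamma_2$ whose iterated Conley-Zehnder indices differ, and Corollary \ref{cor:quotient} transfers this conclusion to every contact finite quotient of $M$.

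The main obstacle is the growth estimate $\dim \HC^0_k(W)\to\infty$: it has to be extracted from the explicit, rather intricate combinatorial formula of \cite{AM1,AM3}, ruling out degenerate cancellations that might keep the graded dimension bounded. The expected way around this is to reduce to an Ehrhart-type count of lattice points in a polytopal region associated to the moment cone, and to verify that the relevant region has positive $(n+1)$-dimensional volume — which should hold precisely because $M$ is a \emph{good} toric contact manifold, so that the moment cone has maximal dimension $n+1$ and non-empty interior, while the bad-orbit corrections only affect a set of densities of lower order.
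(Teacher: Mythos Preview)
Your approach has a genuine gap: the claim that $\dim \HC^0_k(W)\to\infty$ along $n+2\N$ is false in general. The standard sphere $S^{2n+1}$ is a good toric contact manifold, and for its ball filling one has $\dim \HC^0_{n+2k}(W)=1$ for all $k\in\N$ and $\dim \HC^0_n(W)=0$; so the dimensions are uniformly bounded by~$1$. An Ehrhart/volume heuristic does not help here, because the relevant count is of orbits with a \emph{fixed} index rather than index $\leq k$: with finitely many simple orbits of positive mean index, the total number of iterates with index $\leq k$ grows linearly, hence the number with index exactly $k$ stays bounded. Thus there is no $k_0$ beyond which $\dim \HC^0_k(W)$ strictly exceeds $\dim \HC^0_n(W)$ for \emph{every} $k$ of the correct parity, and your choice of $K$ breaks down.

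The paper's argument avoids any growth claim. It first establishes, via the explicit index formula for toric Reeb orbits, an exact periodicity $\cz(\gamma_\delta^{k_1+k_2q})=\cz(\gamma_\delta^{k_1})+2k_2N$ for sufficiently small irrational perturbations of a rational toric Reeb vector (Proposition~\ref{prop:CZ}); taking $K=2\lcm(N_1,\dots,N_m,|\pi_1(M)|)$ turns this into the monotonicity $\dim \HC^0_{d_1}(W)\leq \dim \HC^0_{d_1+d_2K}(W)$. To get the \emph{strict} inequality at degree $n$, one then exploits a freedom you did not use: the rational toric Reeb vector can be chosen so that one of the simple facet orbits, say $\gamma_1$, has mean index greater than $2n$, hence contributes nothing to $\HC^0_n(W)$; but by the second part of Proposition~\ref{prop:CZ} a suitable perturbation makes contractible iterates of $\gamma_{1,\delta}$ land exactly in degree $n+dK$ for every $d$ up to any prescribed bound. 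This single extra contribution, on top of the monotonicity, yields $\dim \HC^0_n(W)<\dim \HC^0_{n+dK}(W)$ for all $d\in\N$. The remainder of your outline --- invoking Theorem~\ref{thm:main} and Corollary~\ref{cor:quotient}, using $H^1(M,\R)=0$ from finiteness of $\pi_1(M)$ --- is correct.
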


\begin{remark}
It turns out that every good toric contact manifold $(M,\xi)$ in dimensions three and five such that $c_1(\xi)=0$ admits a (toric) filling with vanishing first Chern class \cite{AM3}.
\end{remark}

\subsubsection*{\bf Prequantization circle bundles over symplectic manifolds}\hfill\\[-2ex]

Let $(B^{2n},\om)$ be a closed integral symplectic manifold. Consider the prequantization circle bundle $(M,\xi)$ of $(B,\om)$, that is, the contact manifold given by the total space of a principal circle bundle over $B$ whose first Chern class is $-[\omega]$ and with contact structure given by the kernel of a connection form. Suppose that $M$ admits a symplectic filling $W$ with vanishing first Chern class. Then, under some assumptions on $B$, we can show that $W$ satisfies the hypothesis of Theorem \ref{thm:main} with $\Gamma=\{0\}$. More precisely, we have the following result. In what follows,
\[
c_B := \inf \{k \in \N \mid \exists S \in \pi_2(B) \text{ with }
\langle c_1(TB),S \rangle = k\} 
\]
denotes the minimal Chern number of $B$.

\begin{corollary}
\label{cor:prequantization}
Let $(M,\xi)$ be a prequantization circle bundle of a closed integral symplectic manifold $(B,\om)$ such that $\om|_{\pi_2(B)}\neq 0$, $c_1(TB)|_{\pi_2(B)}\neq 0$ and, furthermore, $H_{k}(B;\Q)=0$ for every odd $k$ or $c_B>n$. Suppose that $M$ admits a strong symplectic filling $W$ such that $c_1(TW)=0$. Then every non-degenerate Reeb flow on $M$ carries at least two geometrically distinct closed orbits contractible in $W$. If, additionally, $H^1(M,\R)=0$ then every contact form on a contact finite quotient of $M$ carries at least two geometrically distinct closed orbits. Moreover,  the closed lifts of iterates of these orbits to $M$ are contractible in $W$.
\end{corollary}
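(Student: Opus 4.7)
The plan is to apply Theorem~\ref{thm:main} with $\Gamma=\{0\}$ to handle non-degenerate Reeb flows on $M$. Since $c_1(TW)=0$, Remark~\ref{rmk:contractible} shows that the hypothesis on $c_1$ is satisfied and the grading of $\HC^0_*(W)$ is canonical. The finite-quotient statement then follows from Corollary~\ref{cor:quotient} together with the assumption $H^1(M,\R)=0$. So the core task is to verify
\[
\dim \HC^0_n(W) < \dim \HC^0_{n+jK}(W) \qquad \text{for every } j \in \N
\]
for some $K \in \N$.

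I would compute $\HC^0_*(W)$ via the Reeb dynamics of a connection contact form $\alpha$ on the prequantization bundle $M \to B$. The Reeb flow rotates the circle fibers, so every closed orbit is an iterate of a fiber. The hypothesis $\om|_{\pi_2(B)} \neq 0$ guarantees that some iterates of the fiber are contractible in $M$; let $p$ be the smallest such positive iterate. For each $k = mp$, the contractible orbits form a Morse--Bott family diffeomorphic to $B$, and the standard Morse--Bott computation of positive equivariant symplectic homology (as in Bourgeois--Oancea, using the McLean--Ritter action filtration from Section~\ref{sec:ESH} when $\om$ is non-exact) identifies the $\HC^0_*$ chain complex with
\[
\bigoplus_{m \geq 1} H_{* - \delta_m}(B;\Lambda),
\]
where $\Lambda$ is $\Q$ or the universal Novikov field and $\delta_m = \delta_1 + (m-1)D$ is an arithmetic progression whose common difference $D$ is a positive even integer controlled by $c_1(TB)|_{\pi_2(B)}$, hence non-zero by hypothesis.

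With this description in hand, each alternative hypothesis yields the growth inequality. If $H_k(B;\Q)=0$ for all odd $k$, then every non-trivial summand of the chain complex sits in degrees of a single parity (namely the parity of $\delta_1$, since $D$ is even), the differential vanishes for parity reasons, and the computation becomes exact. Taking $K = D$, the dimensions $\dim \HC^0_{n+jK}(W)$ stabilize as $j \to \infty$ at the total dimension $\sum_{\ell \geq 0}\dim H_\ell(B;\Q)$, which strictly exceeds $\dim \HC^0_n(W)$ because near degree $n$ only the ``left-tail'' Morse--Bott families contribute and therefore fewer homology classes of $B$ are captured. If instead $c_B > n$, then $D$ exceeds $\dim B = 2n$, so the successive Morse--Bott windows of width $\dim B$ in $\HC^0_*(W)$ are pairwise disjoint and separated by at least one empty degree; this gap prevents any differential from connecting distinct Morse--Bott families and again reduces the computation to the above direct sum, after which a parallel comparison at degree $n$ versus $n+jK$ produces the desired strict inequality.

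The main obstacle I expect is carrying out the Morse--Bott identification of $\HC^0_*(W)$ rigorously in the generality required by the statement---in particular for non-exact fillings, where Novikov coefficients and the McLean--Ritter action filtration of Section~\ref{sec:ESH} enter---together with a careful accounting of the potential differentials. The case $H_{\text{odd}}(B;\Q) = 0$ is the cleanest because degree parity immediately forces all differentials to vanish; the case $c_B > n$ requires instead exploiting the degree gap between consecutive Morse--Bott windows to rule out cross-family differentials before the final dimension comparison can be made.
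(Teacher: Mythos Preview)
Your approach is essentially the paper's: reduce to Theorem~\ref{thm:main} and Corollary~\ref{cor:quotient}, compute $\HC^0_*(W)$ via the Morse--Bott Reeb flow of a connection form (the paper simply cites \cite[Theorem 3.1(a)]{GGM2} for the resulting closed formula $\dim \HC^0_k(W)=\sum_{m\in\N}\dim H_{k\mp 2mc_B+n}(B;\Lambda)$), and then compare dimensions. However, there is one real gap and one minor slip.

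\textbf{The gap.} You assert that the common difference $D$ of the index progression is a \emph{positive} even integer. The hypotheses do not give this. Since $c_1(TW)=0$ forces $c_1(\xi)=0$ and hence $c_1(TB)=\rho[\omega]$ for some $\rho\in\R$, the assumption $c_1(TB)|_{\pi_2(B)}\neq 0$ only yields $\rho\neq 0$; the sign of $\rho$ is unconstrained (e.g.\ $B$ could be a high-degree hypersurface in $\CP^N$). When $\rho<0$ the Robbin--Salamon indices of the Morse--Bott families decrease, $\HC^0_*(W)$ is supported in negative degrees, and your inequality $\dim\HC^0_n(W)<\dim\HC^0_{n+jK}(W)$ is vacuously false (both sides are zero for large $j$). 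In that case one must instead verify the second alternative in Theorem~\ref{thm:main}, namely $\dim\HC^0_{-n}(W)<\dim\HC^0_{-n-jK}(W)$. The paper treats the two signs of $\rho$ separately, using $H_{2n}(B;\Lambda)\cong\Lambda$ for $\rho>0$ and $H_0(B;\Lambda)\cong\Lambda$ for $\rho<0$ to produce the strict inequality.

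\textbf{The slip.} Even for $\rho>0$, $\dim\HC^0_{n+jK}(W)$ does not stabilize at the total Betti number of $B$; with $K=2c_B$ it stabilizes at $\sum_{i\geq 0}\dim H_{2n-2ic_B}(B;\Lambda)$, which picks out only degrees congruent to $2n$ modulo $2c_B$. The strict inequality comes more simply: for every $j\geq 1$ the term $m=j$ in the sum contributes $\dim H_{2n}(B;\Lambda)=1$, and the terms $m>j$ reproduce exactly the sum defining $\dim\HC^0_n(W)$; hence $\dim\HC^0_{n+2jc_B}(W)\geq 1+\dim\HC^0_n(W)$.
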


\begin{remark}
\label{rmk:Gysin}
It follows from the Gysin exact sequence that $H^1(M,\R)=0$ whenever $H_1(B;\Q)=0$.
\end{remark}

\begin{remark}
When $\om|_{\pi_2(B)}=0$ and $B$ satisfies some extra conditions (for instance, when $\pi_i(B)=0$ for every $i\geq 2$) it is proved in \cite{GGM1} (c.f. \cite{GS}) that every Reeb flow on $M$ (possibly degenerate) carries infinitely many simple closed orbits.
\end{remark}

\begin{remark}
We have that $H_{*}(B;\Q)$ vanishes in odd degrees and $c_1(TB)|_{\pi_2(B)}\neq 0$ whenever $B$ admits a Hamiltonian circle action with isolated fixed points.
\end{remark}

The proof of the previous corollary is given in Section \ref{sec:prequantization}. Now, note that the prequantization bundle $M$ has a natural symplectic filling $W$ given by the corresponding disk bundle in the complex line bundle $L \xrightarrow{\pi} B$ whose first Chern class is $-[\omega]$. Suppose that $B$ is monotone, that is, $[\om]=\lambda c_1(TB)$ for some $\lambda \in \R$. (We say that $B$ is positive monotone if $\lambda>0$.) One can check that
\[
c_1(TW)=(1-\lambda)\pi^*c_1(TB).
\]
Consequently, when $\lambda=1$ we have that $c_1(TW)=0$. Now, suppose that $\lambda$ is an integer bigger than one and let $\M$ be the prequantization bundle of $(B,\frac{1}{\lambda}\om)$. It is easy to see that $M$ is the finite quotient of $\M$ by the $\Z_\lambda$-action induced by the obvious $S^1$-action on $\M$. Thus, we have the following corollary; see Remark \ref{rmk:Gysin}.

\begin{corollary}
Let $(M,\xi)$ be the prequantization circle bundle of a closed integral symplectic manifold $(B,\om)$ such that $\om|_{\pi_2(B)}\neq 0$, $c_1(TB)|_{\pi_2(B)}\neq 0$ and, furthermore, $H_{k}(B;\Q)=0$ for every odd $k$ or $c_B>n$. Suppose that $[\om]=\lambda c_1(TB)$ for some $\lambda \in \N$ and that $H_1(B;\Q)=0$. Then every contact form on a contact finite quotient of $M$ carries at least two geometrically distinct closed orbits. Moreover, the closed lifts of iterates of these orbits to $M$ have contractible projections to $B$.
\end{corollary}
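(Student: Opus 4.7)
The plan is to apply Corollary \ref{cor:prequantization} to $\M$, the prequantization bundle of $(B,\tfrac{1}{\lambda}\om)$ introduced in the paragraph preceding the statement, and then to transfer the conclusion across the tower of finite contact coverings $\M\to M\to\bM$, where $\bM$ denotes an arbitrary contact finite quotient of $M$. First I would verify that $\M$ satisfies the hypotheses of Corollary \ref{cor:prequantization}. The base $(B,\tfrac{1}{\lambda}\om)$ is integral because $[\om/\lambda]=c_1(TB)\in H^2(B;\Z)$, and the three conditions $\tfrac{1}{\lambda}\om|_{\pi_2(B)}\neq 0$, $c_1(TB)|_{\pi_2(B)}\neq 0$, and either $H_k(B;\Q)=0$ for odd $k$ or $c_B>n$ are inherited verbatim from the present hypotheses. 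The filling $\widetilde{W}$ of $\M$ is the disk bundle inside the complex line bundle over $B$ with first Chern class $-c_1(TB)$; since $(B,\tfrac{1}{\lambda}\om)$ is monotone with constant $\tilde\lambda=1$, the formula $c_1(TW)=(1-\tilde\lambda)\pi^*c_1(TB)$ recalled in the excerpt yields $c_1(T\widetilde{W})=0$.

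The remaining hypothesis $H^1(\M;\R)=0$, needed to access the contact finite quotient statement in Corollary \ref{cor:prequantization}, I would extract from the Gysin sequence of the circle bundle $\M\to B$, whose relevant segment reads
\[
0 \to H^1(B;\R) \to H^1(\M;\R) \to H^0(B;\R) \xrightarrow{\cup e} H^2(B;\R),
\]
with Euler class $e=-c_1(TB)$ nonzero in $H^2(B;\R)$ because $c_1(TB)|_{\pi_2(B)}\neq 0$ (a nonzero integer pairing on a spherical class cannot come from a torsion element). The cup product with $e$ is therefore injective, and combined with $H^1(B;\R)=0$ (equivalent to $H_1(B;\Q)=0$) this forces $H^1(\M;\R)=0$.

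Next I would observe that $\bM$ is itself a contact finite quotient of $\M$: the projection $\M\to M$ is the finite contact covering with deck group $\Z_{\lambda}\subset S^1$ discussed just above the statement, and $M\to\bM$ is a contact finite quotient by construction, so the composition $\M\to\bM$ is a finite covering for which the contact structure on $\M$ is pulled back from $\bM$ and is hence preserved by every deck transformation. Corollary \ref{cor:prequantization} applied to $\M$ then produces on $\bM$ two geometrically distinct closed Reeb orbits, together with closed lifts of some iterates to $\M$ that are contractible in $\widetilde{W}$. Since $\widetilde{W}$ deformation retracts onto $B$ via the disk bundle projection, these $\M$-lifts project to contractible loops in $B$; pushing them down along $\M\to M$ yields closed lifts in $M$ of (possibly higher) iterates of the two orbits whose projections to $B$ coincide with those of the $\M$-lifts and therefore remain contractible. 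The main obstacle I anticipate is purely organizational: tracking iterates consistently across the three-layer tower $\M\to M\to\bM$ and checking that the naturally pulled-back contact structure agrees with the prequantization one on $\M$. The essential mathematical input beyond Corollary \ref{cor:prequantization} reduces to the Gysin computation and the monotonicity identity for $c_1(T\widetilde{W})$ above.
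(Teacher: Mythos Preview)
Your proposal is correct and follows essentially the same route as the paper: pass to the prequantization $\M$ of $(B,\tfrac{1}{\lambda}\omega)$, use the monotonicity identity to get $c_1(T\widetilde{W})=0$, invoke the Gysin sequence to obtain $H^1(\M;\R)=0$, and then apply Corollary~\ref{cor:prequantization} to $\M$, noting that any contact finite quotient of $M$ is a contact finite quotient of $\M$. The paper records exactly this argument in the paragraph preceding the statement together with Remark~\ref{rmk:Gysin}; your write-up simply supplies more of the routine verifications (integrality of $\omega/\lambda$, the explicit Gysin segment, and the deformation retraction $\widetilde{W}\simeq B$ for the contractibility claim).
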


\begin{remark}
If a closed curve $\ga$ in $M$ has contractible projection to $B$ then it must be homotopic to some iterate of the fiber and therefore, since $\om|_{\pi_2(B)}\neq 0$, its homotopy class $[\ga] \in \pi_1(M)$ is nilpotent, i.e. $[\ga]^k=0$ for some $k$.
\end{remark}

\begin{remark}
Note that if $c_B=1$ then, since $\om$ has to be integral, $\lambda$ must be an integer. Therefore, $[\om]=\lambda c_1(TB)$ for some $\lambda \in \N$ whenever $B$ is positive monotone and $c_B=1$.
\end{remark}

\subsubsection*{\bf Brieskorn spheres}\hfill\\[-2ex]

Given $a=(a_0,\dots,a_{n+1}) \in \N^{n+2}$ define $\Sigma_a$ as the intersection of the hypersurface
\[
z_0^{a_0} + \dots + z_{n+1}^{a_{n+1}} = 0
\]
in $\C^{n+2}$ with the unit sphere $S^{2n+3} \subset \C^{n+2}$. It is well known that $\alpha_a=\frac{i}{8}\sum_{j=0}^{n+1} a_j(z_jd\bar z_j-\bar z_jdz_j)$ defines a contact form on $\Sigma_a$ and $(\Sigma_a,\xi_a:=\ker\alpha_a)$ is called a Brieskorn manifold. When $n$ is even, $a_0=\pm 1\ \text{mod}\ 8$ and $a_1=\dots=a_{n+1}=2$ we have that $\Sigma_a$ is diffeomorphic to the sphere $S^{2n+1}$ and called a Brieskorn sphere. Brieskorn spheres admit strong symplectic fillings given by Liouville domains $W$ satisfying $c_1(TW)=0$ and it turns out that $W$ satisfies the hypothesis of Theorem \ref{thm:main} with $\Gamma=\{0\}$; see Section \ref{sec:brieskorn}. Therefore, we obtain the following result which is a generalization of \cite[Theorem C]{Ka}.

\begin{corollary}
\label{cor:brieskorn}
Let $M$ be a contact finite quotient of a Brieskorn sphere. Then every non-degenerate Reeb flow on $M$ carries at least two geometrically distinct closed orbits.
\end{corollary}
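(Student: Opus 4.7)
The plan is to apply Corollary \ref{cor:quotient} with $\Gamma = \{0\}$ to the Brieskorn sphere $\Sigma_a$, viewed as the finite cover of $M$. This reduces the statement to verifying the hypotheses of Theorem \ref{thm:main} for the standard Brieskorn filling $W_a$ of $\Sigma_a$, together with the cohomological condition $H^1(\Sigma_a, \R) = 0$.

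Several verifications are immediate. Since a Brieskorn sphere is diffeomorphic to $S^{2n+1}$, it is simply connected, so $H^1(\Sigma_a, \R) = 0$ and every free homotopy class in $\Sigma_a$ is trivial; the set $\Gamma = \{0\}$ therefore captures all closed Reeb orbits. The Brieskorn filling $W_a$, obtained by smoothing the singularity of $\{z_0^{a_0} + \cdots + z_{n+1}^{a_{n+1}} = 0\}$ inside the unit ball of $\C^{n+2}$, is a Liouville domain with $c_1(TW_a) = 0$, since $W_a$ carries a holomorphic volume form inherited from the ambient $\C^{n+2}$.

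The core of the argument is the verification of the growth condition on $\HC_*^0(W_a)$. I would rely on the explicit description of the Reeb dynamics of $\alpha_a$ on $\Sigma_a$: its orbits come in Morse-Bott families parametrized by the natural torus action on the Brieskorn variety, and after a small Morse-Bott perturbation one obtains a non-degenerate contact form whose closed orbits and their Conley-Zehnder indices can be read off from the exponents $a_0,\dots,a_{n+1}$ via the explicit formulas going back to Ustilovsky and further developed by Kwon--van Koert. Because the Brieskorn sphere is a positive critical Weinstein domain, one can moreover arrange the chosen trivialization of $\Lambda^{n+1}_{\C} TW_a$ so that all contractible good orbits contribute in even degrees. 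These formulas show that the chain complex computing $\HC_*^0(W_a)$ has generators in arbitrarily high even degrees, and that for a suitable period $K$ built from the exponents $a_i$ (for instance a common multiple of the $a_i$) the number of generators in degree $n + jK$ strictly exceeds the number in degree $n$ for every $j \in \N$. This gives the required inequality $\dim \HC_n^0(W_a) < \dim \HC_{n+jK}^0(W_a)$.

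The main obstacle is the index bookkeeping: one must arrange the Conley-Zehnder computation carefully enough to exhibit the specific growth pattern demanded by Theorem \ref{thm:main} and to identify an admissible period $K$ valid for \emph{all} exponent vectors $a$ giving a Brieskorn sphere. Once this is settled, Theorem \ref{thm:main} produces two geometrically distinct closed orbits on $\Sigma_a$ for every non-degenerate contact form, and the passage to contact finite quotients follows directly from Corollary \ref{cor:quotient}, using $H^1(\Sigma_a, \R) = 0$.
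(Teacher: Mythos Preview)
Your proposal is correct and follows essentially the same approach as the paper: reduce to Theorem~\ref{thm:main} via the standard Liouville filling with vanishing $c_1$, use a non-degenerate contact form whose good orbits have indices of a single parity so that $\HC_*(W)$ equals the chain level count, then read off the growth condition from the explicit Conley--Zehnder indices, and finally pass to quotients via Corollary~\ref{cor:quotient} using $H^1(\Sigma_a,\R)=0$.

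The only substantive difference is in the level of detail at the index step. The paper simply quotes Ustilovsky's non-degenerate contact form on $\Sigma_{(p,2,\dots,2)}$ and his index computation \cite[Lemma~4.3]{Ust} to get the concrete values $\dim\HC_n(W)=1$ and $\dim\HC_{n+jK}(W)=2$ with the explicit period $K=4+2p(n-1)$; no Morse--Bott perturbation or general Kwon--van~Koert machinery is needed. Your concern about finding a single $K$ valid for \emph{all} exponent vectors is unnecessary: $K$ may depend on $a$, since the Brieskorn sphere is fixed before passing to its quotients.
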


\subsubsection*{\bf Connected sums}\hfill\\[-2ex]

Let $(W_1,\lambda_1)$ and $(W_2,\lambda_2)$ be two Liouville domains of dimension $2n+2$. The boundary connected sum of them is again a Liouville domain $(W_1\#W_2,\lambda_1\#\lambda_2)$ and the contact connected sum $(\partial W_1\#\partial W_2,\xi_1\#\xi_2)$ is the boundary of it. The following result establishes that the main hypothesis of Theorem \ref{thm:main} is preserved by boundary connected sums of Liouville domains, furnishing many other examples of contact manifolds satisfying the assumptions of Theorem \ref{thm:main}.

\begin{theorem}
\label{thm:sums}
Let $(W_1,\lambda_1)$ and $(W_2,\lambda_2)$ be Liouville domains of dimension $2n+2$ with vanishing first Chern class. Assume that there are non-vanishing sections $\sigma_1$ and $\sigma_2$ of $\Lambda^{n+1}_\C TW_1$ and $\Lambda^{n+1}_\C TW_2$ respectively satisfying the hypothesis of Theorem \ref{thm:main} with $\Gamma$ given by the set of all free homotopy classes. Suppose that $c_1(T(W_1\#W_2))=0$ and let $\sigma$ be a non-vanishing section of $\Lambda^{n+1}_\C T(W_1\#W_2)$ extending $\sigma_1$ and $\sigma_2$. Then $W_1\#W_2$ satisfies the hypothesis of Theorem \ref{thm:main} with the grading of $\HC_*(W_1\#W_2)$ induced by $\sigma$.
\end{theorem}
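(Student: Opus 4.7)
The plan is to reduce Theorem \ref{thm:sums} to a numerical addition argument via a direct-sum formula for positive $S^1$-equivariant symplectic homology under boundary connected sum of Liouville domains. The key formula to establish is
\[
\HC_*^\Gamma(W_1\#W_2) \;\cong\; \HC_*^{\Gamma_1}(W_1) \oplus \HC_*^{\Gamma_2}(W_2),
\]
where $\Gamma_i$ denotes the set of all free homotopy classes in $W_i$, and the isomorphism is grading-preserving once $\HC_*(W_1\#W_2)$ is graded by the extension $\sigma$ of $\sigma_1$ and $\sigma_2$. The construction is standard in the Cieliebak tradition of subcritical handle-attaching: viewing the boundary connected sum as the attachment of a subcritical $1$-handle to $W_1\sqcup W_2$, one chooses a contact form on $\partial(W_1\#W_2)$ that is a small perturbation of the original contact forms on $\partial W_1$ and $\partial W_2$ outside the neck, so that all non-constant closed Reeb orbits are localized near one of the two boundary pieces; an action/maximum-principle argument then forces the Floer cylinders counted by the differential to stay on one side, so the chain complex splits as a direct sum. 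Van Kampen yields $\pi_1(W_1\#W_2) = \pi_1(W_1)*\pi_1(W_2)$ with each $\pi_1(W_i)$ injecting, so the decomposition respects the filtration by free homotopy classes, and the Conley--Zehnder indices near $\partial W_i$ coincide whether computed in $W_i$ via $\sigma_i$ or in $W_1\#W_2$ via $\sigma$.

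With the sum formula in place, the conclusion follows by dimension counting. If both $W_1$ and $W_2$ satisfy the upward inequality of Theorem \ref{thm:main} at degree $n$ with periods $K_1$ and $K_2$, one sets $K=\lcm(K_1,K_2)$ and computes, for every $j\in\N$,
\[
\dim \HC_{n+jK}^\Gamma(W_1\#W_2) - \dim \HC_n^\Gamma(W_1\#W_2) = \sum_{i=1}^{2}\Bigl(\dim \HC_{n+jK}^{\Gamma_i}(W_i) - \dim \HC_n^{\Gamma_i}(W_i)\Bigr) > 0,
\]
since both summands are strictly positive. The symmetric case where both $W_i$ satisfy the downward inequality at $-n$ is handled identically, again by summing two strictly positive differences.

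The main obstacle is the mixed case in which $W_1$ satisfies the upward inequality at $n$ while $W_2$ satisfies the downward inequality at $-n$, or vice versa. Here a naive summation can a priori fail in both directions, because a large value of $\dim \HC_n^{\Gamma_2}(W_2)$ could exactly offset the strict gain coming from $W_1$ at degrees $n+jK$, and symmetrically for the downward direction. To resolve this I would exploit the structural properties of $\HC_*^\Gamma$ for a Liouville domain with $c_1 = 0$ --- in particular the long exact sequence relating positive equivariant symplectic homology with the full $S^1$-equivariant theory, together with the action filtration and the interaction of the grading with the section $\sigma$ --- to show that the strict dimensional gain on the winning side of each factor forces strict growth on at least one of the two sides of $W_1\#W_2$. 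Concretely, by enlarging $K$ to a sufficiently large common multiple of $K_1,K_2$ one aims to arrange that at least one of the two inequalities of Theorem \ref{thm:main} propagates to the connected sum even in the presence of the opposite-side contribution. This mixed-direction case is the most delicate technical step; the rest of the argument is then a direct application of the sum formula and the elementary dimension count above.
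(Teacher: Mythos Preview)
Your direct-sum formula $\HC_*(W_1\#W_2) \cong \HC_*(W_1)\oplus \HC_*(W_2)$ is false, and this is the central gap. Take $W_1=W_2=D^{2n+2}$: the boundary connected sum of two balls is Weinstein--deformation equivalent to a ball, so $\dim\HC_{n+2}(W_1\#W_2)=1$, whereas your formula would predict $2$. The heuristic that Reeb orbits localize near the two original boundaries is fine, but positive equivariant symplectic homology of a subcritical handle is \emph{not} zero, and it genuinely enters the answer; no maximum-principle argument on Floer cylinders will make it disappear. What the paper uses instead is the exact triangle of Bourgeois--Oancea / Cieliebak--Oancea for an index-$1$ handle attachment,
\[
\cdots \to \HC_*(D^{2n}) \to \HC_*(W_1\#W_2) \to \HC_*(W_1)\oplus\HC_*(W_2) \to \HC_{*-1}(D^{2n}) \to \cdots,
\]
from which two things follow: (i) since $\HC_n(D^{2n})=\HC_{n-1}(D^{2n})=0$, your sum formula \emph{does} hold exactly in degree $n$; and (ii) since $\dim\HC_j(D^{2n})\le 1$ in every degree, one gets
\[
\dim\HC_{n+jK}(W_1\#W_2)\ \ge\ \dim\HC_{n+jK}(W_1)+\dim\HC_{n+jK}(W_2)-1.
\]
The point you are then missing is that each strict inequality for $W_i$ contributes a gain of at least $1$, hence together at least $2$; this survives the possible loss of $1$ coming from the triangle and yields the strict inequality for $W_1\#W_2$ with $K=\lcm(K_1,K_2)$.

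As for the mixed case you flag (one $W_i$ satisfying the inequality at $n$, the other at $-n$): your instinct that it is delicate is correct, and in fact the paper's written proof treats only the case where both $W_i$ satisfy the inequality on the same side, the other matching case being symmetric. Your proposed sum formula would not have resolved the mixed case either, and the sketch you give for it is not an argument. The essential repair you need is to replace the false direct-sum formula by the exact triangle above; once that is in place, the matching-sides case goes through exactly as in the paper.
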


\vskip .2cm
\noindent {\bf Organization of the paper.} In Section \ref{sec:ESH} we discuss equivariant symplectic homology for symplectic fillings with vanishing first Chern class. The proof of Theorem \ref{thm:main} is established in Section \ref{sec:main}. Corollaries \ref{cor:quotient}, \ref{cor:displaceable}, \ref{cor:cosphere}, \ref{cor:geodesics}, \ref{cor:toric}, \ref{cor:prequantization} and \ref{cor:brieskorn} are proved in Sections \ref{sec:quotient}, \ref{sec:displaceable}, \ref{sec:cosphere}, \ref{sec:geodesics}, \ref{sec:toric}, \ref{sec:prequantization} and \ref{sec:brieskorn} respectively. Finally, Section \ref{sec:sums} is devoted to the proof of Theorem \ref{thm:sums}.

\subsection*{Acknowledgements}
We are grateful to Viktor Ginzburg, Marco Mazzucchelli and Otto van Koert for useful discussions and to Gustavo Granja for suggesting us the proof of Proposition \ref{prop:hurewicz} and explaining us the Whitehead-Serre theorem for nilpotent spaces.

\section{Equivariant symplectic homology for symplectic fillings with vanishing first Chern class}
\label{sec:ESH}

Let $(M^{2n+1},\xi)$ be a closed contact manifold admitting a strong symplectic filling $W$ such that $c_1(TW)=0$. Fix a non-vanishing section of the determinant line bundle $\Lambda^{n+1}_\C TW$. Then, as we will shortly explain in this section, one can associate to $W$ its positive equivariant symplectic homology $\HC_*(W)$. This construction, briefly discussed in Section \ref{sec:ESH2}, is well known when $W$ is a Liouville domain, but without this assumption we have to use coefficients in the universal Novikov field and an action filtration introduced by McLean and Ritter \cite{MR}; c.f. \cite[Section 2.2]{GS}.

In Section \ref{sec:MB}, we discuss Morse-Bott spectral sequences which play a key role in the computation of equivariant symplectic homology. Using these sequences, one can show that given a non-degenerate contact form $\alpha$ on $M$, $\HC_*(W)$ can be obtained as the homology of a chain complex generated by the good closed Reeb orbits of $\alpha$. In particular, this homology is a contact invariant whenever $(M,\xi)$ admits a non-degenerate contact form such that every periodic orbit has Conley-Zehnder index with the same parity or, more generally, the set of Conley-Zehnder indexes is lacunary, i.e. does not contain any two consecutive integers.

\subsection{Equivariant symplectic homology}
\label{sec:ESH2}

Suppose that the symplectic filling $W$ is a Liouville domain. The positive equivariant symplectic homology $\HC_*(W)$ of $W$ was introduced by Viterbo \cite{Vit} and developed by Bourgeois and Oancea \cite{BO10, BO13a, BO13b, BO17}. As noticed in the introduction, the equivariant symplectic homology has a filtration given by the free homotopy classes in $W$ and given a set $\Gamma$ of free homotopy classes we denote by $\HC_*^\Gamma(W)$ the corresponding homology.

It turns out that $\HC_*^\Gamma(W)$ can be obtained as the homology of a chain complex $\CC^\Gamma_*(\alpha)$ with rational coefficients generated by the good closed Reeb orbits of a non-degenerate contact form $\alpha$ on $M$ with free homotopy class in $\Gamma$ graded by the corresponding Conley-Zehnder indices; see \cite[Proposition 3.3]{GG} and \cite[Lemma 2.1]{GU}. (Recall that a non-degenerate closed Reeb orbit is good if its index has the same parity of the index of the underlying simple closed orbit; otherwise it is called bad.) In general, the differential in $\CC^\Gamma_*(\alpha)$ depends on the filling $W$ and several extra choices. However, if $M$ admits a contact form such that the Conley-Zehnder indices of every good closed orbit have the same parity then clearly, by degree reasons, the differential must vanish and consequently the positive equivariant symplectic homology is a contact invariant.

Now, suppose that $(W,\om)$ is a strong symplectic filling with vanishing first Chern class but not necessarily a Liouville domain (in particular, $\om$ does not need to be exact.). Then we can still associate to $W$ its positive equivariant symplectic homology as we will briefly explain below. The novelty in this case is that $\HC_*^\Gamma(W)$ is a vector space over the universal Novikov field and we have to use an action filtration introduced by McLean and Ritter in \cite{MR}. We will follow \cite[Appendixes D and F]{MR} closely.

Let $\alpha$ be a non-degenerate contact form on $\partial W$ such that $d\alpha=\om|_{\partial W}$. Consider the symplectic completion $\W$ of $W$ obtained by attaching to $W$ the symplectic cone $(\Wc:=[1,\infty)\times \partial W,d(r\alpha))$ where $r$ is the coordinate in $[1,\infty)$.

Let $Z=r\partial_r$ be the Liouville vector field defined on $\Wc$ and uniquely characterized by the equation $\iota_{Z}\om=r\alpha$. Denote by $Y$ the Reeb vector field on $\Wc$ defined by $\iota_Y d\alpha=0$ and $\alpha(Y)=1$ (clearly, here we are tacitly identifying $\partial W$ with $\{1\}\times \partial W$). In what follows, by \emph{Reeb periods} we mean the periods of the closed orbits of $Y$. Given $r_0 \geq 1$, a compatible almost complex structure $J$ on $\W$ is said to be of contact type for $r\geq r_0$ if $JZ=Y$ on $[r_0,\infty) \times \partial W$.

Let $J$ be a compatible almost complex structure on $\W$ and $H: \W \to \R$ a smooth Hamiltonian. We will use the sign convention that its Hamiltonian vector field $X_H$ is given by the equation $\iota_{X_H}\om=dH$. The pair $(H,J)$ is called admissible if there exists $r_0\geq 1$ such that
\begin{enumerate}
\item $J$ is of contact type for $r\geq r_0$,
\item $H|_{[r_0,\infty) \times \partial W}=h(r)$ only depends on the radial coordinate,
\item $h'(r_0)>0$ is smaller than the minimal Reeb period,
\item $h''(r_0)>0$ and $h''(r)\geq 0$ for $r\geq r_0$,
\item if $h''(r)=0$ for some $r\geq r_0$ then we require that $h'(r)$ is not a Reeb period,
\item for large $r$, $h'(r)$ is a constant not equal to a Reeb period,
\item for $r\leq r_0$, i.e. on $W \cup [1,r_0] \times \partial W$, $H$ is Morse and $C^2$-small so that all closed 1-orbits of $X_H$ in $r\leq r_0$ are the critical points of $H$.
\end{enumerate}

Note that in the definition of admissible Hamiltonians we are fixing the contact form $\alpha$ used to define the radial coordinate $r$ in $\Wc$. Moreover, by our aforementioned sign convention, the Hamiltonian vector field $X_H$ is a \emph{negative} multiple of the Reeb vector field for $r\geq r_0$.

With the conditions above, the necessary compactness results for Floer trajectories hold, and this allows us to define the Floer homology $\HF(H,J)$ using coefficients in the universal Novikov field
\[
\Lambda = \bigg\{\sum_{i=1}^\infty n_iT^{a_i}\ ;\, a_i \in \R,\, a_i\to\infty,\, n_i\in \K\bigg\},
\]
where $\K$ is any given field of characteristic zero. Moreover, this homology does not depend on the choice of $J$ satisfying property (1). (More precisely, we need to pick time-dependent perturbations $H_t$ and $J_t$ (1-periodic in time) such that $H_t$ is non-degenerate and we have transversality for the relevant spaces of Floer trajectories. Moreover, it is required that, for a sufficiently large $R$, $J_t$ is of contact type and $H_t$ is of the form $ar+b$ for $r\geq R$, where $a$ is not equal to any Reeb period. We call the corresponding pair $(H_t,J_t)$ regular.)

\begin{remark}
Due to our sign conventions, the grading of Floer homology is given by \emph{minus} the Conley-Zehnder index. This is in accordance with the fact that the Hamiltonian vector field is a negative multiple of the Reeb vector field for $r\geq r_0$.
\end{remark}
 
We define a partial ordering on the set of admissible Hamiltonians in the following way: $H_0 \leq H_1$ if $H_0(x) \leq H_1(x)$ for every $x\in \W$. (For time-dependent perturbations $H_{i,t}$ ($i=0,1$) we require that $H_{0,t}(x) \leq H_{1,t}(x)$ for  every $(x,t)\in \W \times S^1$.) An admissible homotopy is a family of pairs $(H_s,J_s)$ satisfying properties (1)-(7) above for a fixed $r_0$ such that $\partial_s h'_s \leq 0$, where $h_s$ is the function such that $H_s|_{[r_0,\infty) \times \partial W}=h_s(r)$. Given an admissible homotopy $(H_s,J_s)$ we can define the continuation morphisms $\HF(H_0,J_0) \to \HF(H_1,J_1)$. (As before, we actually have to take suitable time-dependent perturbations $H_{s,t}$ and $J_{s,t}$ chosen generically in order to achieve transversality for the relevant spaces of parametrized Floer trajectories.)

Using this, we can define the symplectic homology as
\[
\SH_*(W,\alpha):=\varinjlim \HF_*(H)
\]
where the direct limit is taken over the set of admissible Hamiltonians with the partial ordering and continuation morphisms defined above. It turns out that this homology does not depend on the choice of $\alpha$; see \cite[Theorem 8]{Rit1} and \cite[Section 2.2]{BR}.

Now, we will discuss an action filtration introduced by McLean and Ritter \cite[Appendix D]{MR}. Fix an admissible Hamiltonian $H: \W \to \R$ and consider a smooth non-decreasing cut-off function $\phi: \R \to [0,1]$ such that
\begin{enumerate}
\item $\phi=0$ for $r\leq r_0$ and $\phi>0$ for $r>r_0$,
\item $\phi'>0$ for $r_0<r<r_1$ and $h'(r)$ is constant for $r\geq r_1$,
\item $\phi(r)=1$ for large $r$.
\end{enumerate}
Consider the 2-form $\eta$ on $\W$ given by
\[
\eta=
\begin{cases}
d(\phi(r)\alpha)\ & \text{on}\  \Wc \\
0\ & \text{on}\  W.
\end{cases}
\]
Notice that, by property (1) of $\phi$, this form vanishes on $W \cup [1,r_0] \times \partial W$. We associate to $\eta$ the 1-form $\Omega_\eta$ on the free loop space $C^\infty(S^1,\W)$, with $S^1=\R/\Z$, given by
\[
(\Omega_\eta)_x(X)=\int_0^1 \eta(X(t),\dot x(t)-X_{H}(x(t)))\,dt
\]
for every $x \in C^\infty(S^1,\W)$ and $X \in T_xC^\infty(S^1,\W)$. Following \cite{MR}, we will introduce a primitive for $\Om_\eta$ which will give us our action filtration. Let $f: \R \to [0,\infty)$ be the smooth function given by
\[
f(r)=\int_0^r \phi'(\tau)h'(\tau)\,d\tau.
\]
Note that, by the properties of $h$ and $\phi$, it satisfies
\begin{enumerate}
\item $f=0$ for $r\leq r_0$ and $f>0$ for $r>r_0$,
\item $f$ is bounded.
\end{enumerate}
The \emph{filtration functional} $F: C^\infty(S^1,\W) \to \R$ is defined as
\[
F(x) = -\int_{S^1} x^*(\phi\alpha) - \int_0^1 f(r(x(t)))\,dt,
\]
where $r(x(t))$ is the $r$-coordinate of $x(t)$. (Note that $\phi$ and $f$ vanish on $W$ and consequently $F$ is well defined for any curve in $\W$.)

\begin{remark}
Notice that our action functional differs from that in \cite{MR} in the sign in the second term. Moreover, we are using the sign convention $\iota_{X_H}\om=dH$ rather than $\iota_{X_H}\om=-dH$ as in \cite{MR}. The reason is that we are using \emph{homological} conventions rather than cohomological ones as in \cite{MR}. It is easy to see that the arguments from \cite{MR} work in this case with the appropriate sign changes.
\end{remark}

Given an admissible homotopy $(H_s,J_s)$ such that $h_s'(r)$ is constant for $r\geq r_1$ and all $s$ (this constant depends on $s$), define $F_s$ as the corresponding filtration functional for $H_s$, that is, $F_s(x) = -\int_{S^1} x^*(\phi\alpha) - \int_0^1 f_s(r(x(t)))\,dt$ where $f_s(r)=\int_0^r \phi'(\tau)h_s'(\tau)\,d\tau$.

This action functional has the following nice properties. 

\begin{theorem}[Theorem 6.2 in \cite{MR}]
\label{thm:action}
The filtration functional $F$ satisfies
\begin{enumerate}
\item Exactness: $F$ is a primitive of $\Omega_\eta$.
\item Negativity: $dF(\partial_s u)\leq 0$ for any Floer trajectory $u: \R \times S^1 \to \W$ of an admissible $(H,J)$.
\item Separation: $F=0$ on all loops in $r\leq r_0$ and $F>0$ on the 1-orbits of $H$ in $r\geq r_0$.
\item Compatibility: $dF_s(\partial_s u)\leq 0$ for any parametrized Floer trajectory $u: \R \times S^1 \to \W$ of an admissible homotopy $(H_s,J_s)$ such that $h_s'(r)$ is constant for $r\geq r_1$ and all $s$.
\item Strictness: $F(x_-) > F(x_+)$ for any Floer trajectory joining distinct orbits $x_-$ and $x_+$ with $r(x_+)\geq r_0$.
\end{enumerate}
Thus, $F$ determines a filtration on the Floer chain complex of a given admissible pair $(H,J)$ and this filtration is respected by Floer continuation maps for admissible homotopies $(H_s,J_s)$.
\end{theorem}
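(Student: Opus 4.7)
\medskip

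\noindent\textbf{Proof proposal.} The plan is to prove the five properties in the order (1), (2), (4), (3), (5): the computational core is (1), from which (2) and (4) follow by inserting the (parametrized) Floer equation, while (3) and (5) need additional input from the structure of admissible Hamiltonians and a unique-continuation argument.

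For \emph{exactness}, I would compute $dF$ by varying a loop $x$ by a tangent vector $X\in T_xC^\infty(S^1,\W)$. Using the standard loop-space Cartan formula, the first term contributes $-\int_0^1 d(\phi\alpha)(X,\dot x)\,dt=-\int_0^1\eta(X,\dot x)\,dt$, and the second contributes $-\int_0^1 f'(r)\,dr(X)\,dt$. The key input is that on the cone, where $H=h(r)$, $\omega=d(r\alpha)$, $dr(R)=0$ and $\alpha(R)=1$, the Hamiltonian vector field takes the form $X_H=-h'(r)R$. A direct expansion of $\eta=\phi'(r)\,dr\wedge\alpha+\phi(r)\,d\alpha$ then gives
\[
\eta(X,X_H)=-\phi'(r)h'(r)\,dr(X)=-f'(r)\,dr(X),
\]
and combining the two terms yields $dF=\Omega_\eta$ (up to the sign conventions fixed by the paper).

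For \emph{negativity} and \emph{compatibility}, I would combine exactness with the (parametrized) Floer equation $\partial_s u+J_s(\partial_t u-X_{H_s})=0$ to rewrite
\[
dF_s(\partial_s u)=\int_0^1\eta(\partial_s u,\,\partial_t u-X_{H_s})\,dt=-\int_0^1\eta(\partial_s u,\,J_s\partial_s u)\,dt.
\]
It then suffices to show $\eta(V,JV)\ge 0$ for every tangent vector $V$ and every $J$ compatible with $\omega$ and of contact type on $\{r\ge r_0\}$. Decomposing $V=aZ+bR+V^\xi$ with $V^\xi\in\xi$ and using $JZ=R$, $JR=-Z$, $\alpha(V^\xi)=dr(V^\xi)=0$ and the $d\alpha$-compatibility of $J|_\xi$, one computes
\[
\eta(V,JV)=\phi'(r)\,r\,(a^2+b^2)+\phi(r)\,d\alpha(V^\xi,JV^\xi)\ge 0,
\]
since $\phi,\phi'\ge 0$; on $W$ the inequality is trivial because $\eta\equiv 0$. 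For (4) the same computation applies with $(H_s,J_s)$ (contact type is part of the admissibility of the homotopy).

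For \emph{separation}, $F\equiv 0$ on loops contained in $r\le r_0$ is immediate, as $\phi$ and $f$ vanish there. For a 1-orbit $x$ of $X_H=-h'(r)R$ at a fixed radius $r\ge r_0$, a single integration by parts in the definition of $f$ yields
\[
F(x)=\phi(r)h'(r)-f(r)=\int_{r_0}^{r}\phi(\tau)h''(\tau)\,d\tau.
\]
The existence of the orbit forces $h'(r)$ to equal a Reeb period, so by admissibility condition (5) one must have $h''(r)>0$; together with $h'(r_0)<\min$ Reeb period from (3) and $h''\ge 0$ from (4), this produces a non-empty subinterval of $(r_0,r)$ on which $\phi\,h''>0$, giving $F(x)>0$. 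Finally, for \emph{strictness}, integrating (2) yields $F(x_-)-F(x_+)=-\int_{-\infty}^{+\infty} dF(\partial_s u)\,ds\ge 0$; if equality held, $\eta(\partial_s u,J\partial_s u)\equiv 0$, which in view of the formula above forces $\partial_s u\equiv 0$ on the open region $\{r\in(r_0,r_1)\}$, non-empty since $r(x_+)\ge r_0$. The main obstacle, and the place where real work is needed, is to promote this local $s$-invariance to global constancy in $s$ of $u$: this uses the unique continuation principle for the perturbed Cauchy--Riemann equation, which then forces $u(s,t)\equiv x_+(t)$ and contradicts the hypothesis $x_-\neq x_+$.
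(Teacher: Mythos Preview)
The paper does not prove this theorem: it is stated as a quotation of Theorem~6.2 in McLean--Ritter \cite{MR} and used as a black box to set up the action filtration. There is therefore no proof in the paper to compare against.

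That said, your outline is correct and is essentially the argument in \cite{MR}, adapted to the homological sign conventions of the present paper. The exactness computation, the reduction of (2) and (4) to the pointwise inequality $\eta(V,JV)\ge 0$ via the Floer equation, and the integration-by-parts identity $F(x)=\int_{r_0}^{r}\phi(\tau)\,h''(\tau)\,d\tau$ for a 1-orbit at radius $r$ are all correct (the sign in ``primitive'' is indeed a convention issue, as you flag; what matters for (2)--(5) is that $dF(\partial_s u)=-\int_0^1\eta(\partial_s u,J\partial_s u)\,dt$). The one place that deserves an extra sentence is strictness: to invoke unique continuation you need $u^{-1}(\{r_0<r<r_1\})$ to be non-empty. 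This follows because every non-constant 1-orbit of an admissible $H$ lies at a radius strictly between $r_0$ and $r_1$ (by property (2) of $\phi$ the slope $h'$ is constant for $r\ge r_1$, and by property (6) of $H$ this constant is not a Reeb period), so $r(x_+)\in(r_0,r_1)$ and the image of $u$ enters this band for all large $s$. On that region $\eta(\partial_s u,J\partial_s u)=0$ forces $\partial_s u=0$, hence $u(s,\cdot)\equiv x_+$ for large $s$, and unique continuation for the Floer equation then gives $u(s,\cdot)\equiv x_+$ for all $s$, contradicting $x_-\neq x_+$.
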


Let $\CF_*(H,J)$ be the Floer complex of $(H,J)$, $\CF_*^0(H,J)$ the subcomplex generated by the 1-orbits with action $F\leq 0$ and $\CF_*^+(H,J):=\CF_*(H,J)/\CF_*^0(H,J)$. The homology of $\CF_*^+(H,J)$ is denoted by $\HF_*^+(H,J)$. Note that, in principle, this homology depends on the choice of the cut-off function $\phi$ but, as proved in \cite[Corollary 6.4]{MR}, actually it does not. Moreover, it does not depend on the choice of $J$ and the direct limit 
\[
\SH_*^+(W,\alpha):=\varinjlim \HF_*^+(H)
\]
is the positive symplectic homology of $(W,\alpha)$. As before, it turns out that this homology does not depend on the choice of $\alpha$ (c.f. \cite[Theorem 8]{Rit1}) and therefore will be denoted by $\SH_*^+(W)$.

Now, the construction of positive equivariant symplectic homology is analogous to the construction from \cite{BO10, BO13a, BO13b, BO17,Vit}. In what follows, we will very briefly explain this; we refer to \cite{BO10, BO13a, BO13b, BO17,KvK} for details. Given $N \in \N$, we have to consider $S^1$-invariant families of (non-degenerate time-dependent perturbations of) admissible Hamiltonians $H: \W \times S^{2N+1} \times S^1 \to \R$, where the $S^1$-action is the diagonal one on $S^{2N+1} \times S^1$, and the space of $S^1$-equivariant Floer trajectories, consisting of pairs $(u,z)$ given by maps $u: \R \times S^1 \to \W$ and $z:\R \to S^{2N+1}$ such that
\[
\partial_s u + J_{z(s),t}(u)(\partial_t u - X_{H_{z(s),t}}(u))=0,
\]
\[
\dot z(s) - \int_{S^1} \nabla_{z} H(u(s,t),z(s),t)\,dt=0
\]
and
\[
\lim_{s\to-\infty} (u(s,t),z(s))=(\ga_-(t),z_-),\quad \lim_{s\to+\infty} (u(s,t),z(s))=(\ga_+(t),z_+)
\]
uniformly in $t$, where $J_{z,t}$, with $(z,t) \in S^{2N+1}\times S^1$, is a parametrized family of $S^1$-invariant compatible almost complex structures, $\nabla_z$ stands for the gradient in the $z$-coordinate with respect to an $S^1$-invariant metric on $S^{2N+1}$, $z_\pm \in S^{2N+1}$ and $\ga_\pm$ are closed 1-orbits of $H_{z_\pm,t}$. Choosing $H$ and $J$ generically, we have the corresponding $S^1$-equivariant Floer homology $\EHF^{S^1,N}_*(H)$. Using parametrized versions of the previous equations, we can define continuation morphisms $\EHF^{S^1,N}_*(H_0) \to \EHF^{S^1,N}_*(H_1)$ and using these we can define, analogously to the non-equivariant case,
\[
\ESH_*^{S^1,N}(W,\alpha):=\varinjlim \EHF_*^{S^1,N}(H)
\]
where the direct limit is taken over the set of admissible $S^1$-invariant families of Hamiltonians with the partial ordering and continuation morphisms similar to the non-equivariant case. The $S^1$-equivariant symplectic homology of $W$ is defined as
\[
\ESH_*^{S^1}(W,\alpha):=\varinjlim_{N} \ESH_*^{S^1,N}(W,\alpha),
\]
where the direct limit is taken with respect to the embeddings $S^{2N+1} \hookrightarrow S^{2N+3}$, which induce maps $\ESH_*^{S^1,N}(W,\alpha) \to \ESH_*^{S^1,N+1}(W,\alpha)$. As in the non-equivariant case, $\ESH_*^{S^1}(W,\alpha)$ does not depend on the choice of $\alpha$, since the proof of \cite[Theorem 8]{Rit1} extends to the equivariant setup. Therefore, we will denote it by $\ESH_*^{S^1}(W)$. Finally, the positive $S^1$-equivariant symplectic homology $\HC_*(W):=\ESH_*^{+,S^1}(W)$ is defined in the same way as in the non-equivariant case, using the action filtration $F$ defined above.

\subsection{Morse-Bott spectral sequence}
\label{sec:MB}
Assume that the contact form $\alpha$ is non-degenerate or, more generally, Morse-Bott. If $\alpha$ is degenerate, assume, furthermore, that the linearized Reeb flow is complex linear with respect to a unitary trivialization of the contact structure along every periodic orbit. This last condition is satisfied, for instance, when the Reeb flow of $\alpha$ generates a free circle action; see \cite[Remark 8.8]{KvK}. Under this assumption, we have a Morse-Bott spectral sequence that is very useful to compute $\ESH_*^{S^1}(W)$ and $\ESH_*^{+,S^1}(W)$ as we will very briefly explain in this section.

Given an admissible Hamiltonian $H$ as in the previous section, note that the non-constant closed 1-orbits of $H$ have radius $r_1<r_2<\dots<r_{p_{max}}$ where the $r_p$'s are such that $h'(r_p)$ is a Reeb period of $\alpha$ (by our assumptions on $H$ there are finitely many such radii). Take $G: \Z \to \R$ such that
\begin{enumerate}
\item $G$ is increasing (not necessarily strictly),
\item $G(p)=p$ for $p\leq 0$,
\item $G(p)=\phi(r_p)h'(r_p)-f(r_p)$ for $p\in \{1,\dots,p_{max}\}$.
\item $\lim_{p\to\infty} G(p)=\infty$.
\end{enumerate}
Note here that $\phi(r)h'(r)-f(r)=0$ for $r\leq r_0$ and $(\phi h'-f)'(r)=\phi(r)h''(r)\geq 0$ for every $r$ and therefore the existence of $G$ is clear. Note also that $F(x)=G(p)$ for every closed 1-orbit $x$ of $H$ such that $r(x)=r_p$ and $F(x)=0$ for every constant orbit $x$.

Define $A(H)=\{F(x);\,x\text{ is a closed 1-orbit of }H\}$ and choose $\ep>0$ sufficiently small such that
\begin{equation}
\label{eq:filt1}
A(H) \subset \bigcup_{p\in\Z} (G(p-1)+\ep,G(p)+\ep)
\end{equation}
and
\begin{equation}
\label{eq:filt2}
\# (A(H)\cap (G(p-1)+\ep,G(p)+\ep)) \leq 1
\end{equation}
for every $p\in\Z$ . Consider the filtration on the Floer complex $\CF_*(H)$ given by
\[
\F_p\CF_q(H) = \{x \in \CF_q(H);\, F(x)\leq G(p)+\ep\}.
\]
(As mentioned in the previous section, we actually have to take a generic non-degenerate time-dependent perturbation of $H$.) This filtration exhausts the complex in finitely many steps due to our assumptions on $H$ and the condition that $\lim_{p\to\infty} G(p)=\infty$. Consider the corresponding spectral sequence whose $E^0$-page is given by
\[
E^0_{p,q} =  \F_p\CF_{p+q}(H)/\F_{p-1}\CF_{p+q}(H).
\]
Using properties \eqref{eq:filt1} and \eqref{eq:filt2} of our filtration, we can show that the first page of this spectral sequence is given by
\[
E^1_{p,q} =
\begin{cases}
\bigoplus_{c} \HF^{loc}_{p+q}(B_{p,c},H) & 0<p\leq p_{max} \\
H_{q+n+1}(W,\partial W;\Lambda) & p=0\\
0 & p<0\text{ or }p>p_{max},
\end{cases}
\]
where the sum above runs over the connected components $B_{p,c}$ of the Morse-Bott components with $F$-action $p$ and $\HF^{loc}_{*}(B_{p,c},H)$ stands for the local Floer homology of $B_{p,c}$; see \cite{KvK,MR}. In general, this local Floer homology is isomorphic to the singular homology of $B_{p,c}$ twisted by a local system of coefficients, but it turns out that this system is trivial under our assumption on $\alpha$ as the following lemma shows.

\begin{lemma}[Lemma 8.9 in \cite{KvK} and Lemma 7.1 in \cite{MR}]
\label{lemma:lfh}
Suppose that $\alpha$ is Morse-Bott and, if $\alpha$ is degenerate, assume furthermore that the linearized Reeb flow is complex linear with respect to a unitary trivialization of the contact structure along every periodic orbit of $\alpha$. Then
\[
\HF^{loc}_{*}(B_{p,c},H) \cong H_{*+\mu(B_{p,c})}(B_{p,c};\Lambda)
\]
where $\mu(B_{p,c})$ is the Robbin-Salamon index of any closed orbit in $B_{p,c}$.
\end{lemma}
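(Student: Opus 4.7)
The plan is to reduce the computation to a Morse-theoretic calculation on $B_{p,c}$ and then to show that the local system of coefficients that naturally arises is trivial under the complex-linearity hypothesis. This follows the standard Morse-Bott framework for Floer theory; the only step where the special hypothesis enters is the orientation/local-system argument at the end.

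First I would choose an auxiliary Morse function $f$ on the Morse-Bott component $B_{p,c}$ and replace $H$ by a small perturbation $H_\delta$ supported in a tubular neighborhood of $B_{p,c}$ in the loop space, built so that the non-degenerate $1$-orbits of $H_\delta$ in this neighborhood correspond bijectively to the critical points of $f$ (cf.\ the constructions in \cite{BO10,KvK,MR}). A Robbin-Salamon index computation for such perturbations yields, for each critical point $q$ of $f$,
\[
\cz(\gamma_q) \;=\; \mu(B_{p,c}) \;-\; \tfrac{1}{2}\dim B_{p,c} \;+\; \ind_{Morse}(f,q).
\]
Since the grading on the Floer complex is by $-\cz$, this produces exactly the overall degree shift by $\mu(B_{p,c})$ in the statement, once one matches conventions with the shift by $\tfrac{1}{2}\dim B_{p,c}$ between Morse homology in top and bottom degree.

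Second I would identify the Floer differential with a Morse differential on $B_{p,c}$. For $\delta$ small enough, an adiabatic-limit/gluing argument of the type developed in \cite{BO10} puts the moduli space of Floer cylinders connecting $\gamma_{q_-}$ and $\gamma_{q_+}$ into oriented bijection with the moduli of $\nabla f$-gradient trajectories from $q_-$ to $q_+$ in $B_{p,c}$. The result is a chain isomorphism between the local Floer complex and the Morse complex of $f$ on $B_{p,c}$ with coefficients in $\Lambda$, a priori twisted by a local system $\mathcal{L}\to B_{p,c}$ encoding the monodromy of coherent orientations of the Floer determinant line bundles.

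The main obstacle, and indeed the only nontrivial point, is showing that $\mathcal{L}$ is trivial. The monodromy of $\mathcal{L}$ along a loop $\eta\subset B_{p,c}$ is computed from the parallel transport, along $\eta$, of the orientation of the determinant line of the linearized Cauchy-Riemann operator associated to the varying asymptotic orbit. Homotoping through asymptotic data, this reduces to the parallel transport of a unitary trivialization of $\xi$ under the linearized Reeb return maps of the orbits parametrized by $\eta$. Under the hypothesis that these linearized return maps are complex linear in a unitary trivialization, the transport takes values in the connected group $U(n)$, which acts trivially on the orientation of any real determinant line; hence $\mathcal{L}$ is trivial and
\[
\HF^{loc}_{*}(B_{p,c},H) \;\cong\; H_{*+\mu(B_{p,c})}(B_{p,c};\Lambda).
\]
For the technical orientation conventions and the adiabatic gluing details I would refer to \cite[Lemma 8.9]{KvK} and \cite[Lemma 7.1]{MR}; the complex-linearity assumption here plays exactly the role of promoting the generically twisted identification to the untwisted one stated above.
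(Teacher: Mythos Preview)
The paper does not give its own proof of this lemma: it is quoted verbatim as a result from \cite[Lemma~8.9]{KvK} and \cite[Lemma~7.1]{MR}, with no argument supplied. Your proposal is therefore not in competition with anything in the paper; it is a sketch of the proof in those references, and as such it is accurate in outline --- the Morse--Bott perturbation, the index shift, the adiabatic identification of Floer with Morse trajectories, and the triviality of the orientation local system via the $U(n)$-valued monodromy are exactly the ingredients used there. Since you already cite \cite{KvK,MR} for the details, your write-up is consistent with how the paper handles this lemma.
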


Thus, since our filtration is bounded and exhausting, we obtain convergent spectral sequences
\[
E^1_{p,q} \implies \HF_*(H)\quad\text{and}\quad E^{1,+}_{p,q} \implies \HF^+_*(H)
\]
whose first pages are given by
\[
E^1_{p,q} =
\begin{cases}
\bigoplus_{c} H_{p+q+\mu(B_{p,c})}(B_{p,c};\Lambda) & 0<p\leq p_{max} \\
H_{q+n+1}(W,\partial W;\Lambda) & p=0\\
0 & p<0\text{ or }p>p_{max},
\end{cases}
\]
and
\[
E^{1,+}_{p,q} =
\begin{cases}
\bigoplus_{c} H_{p+q+\mu(B_{p,c})}(B_{p,c};\Lambda) & 0<p\leq p_{max} \\
0 & p\leq 0\text{ or }p>p_{max}.
\end{cases}
\]
It turns out that analogous spectral sequences exist in the equivariant setup converging to $\EHF^{S^1}_*(H)$ and $\EHF^{+,S^1}_*(H)$.

Choosing a suitable sequence of Hamiltonians, we have that these spectral sequences pass to the limit, furnishing the following result.

\begin{theorem}[Corollary 7.2 in \cite{MR}]
Under our assumption on $\alpha$, there are convergent spectral sequences
\[
E^1_{p,q} \implies \ESH_*^{S^1}(W)\quad\text{and}\quad E^{1,+}_{p,q} \implies \ESH_*^{+,S^1}(W)
\]
whose first pages are given by
\[
E^1_{p,q} =
\begin{cases}
\bigoplus_{c} H^{S^1}_{p+q+\mu(B_{p,c})}(B_{p,c};\Lambda) & p>0 \\
H^{S^1}_{q+n+1}(W,\partial W;\Lambda) & p=0\\
0 & p<0,
\end{cases}
\]
and
\[
E^{1,+}_{p,q} =
\begin{cases}
\bigoplus_{c} H^{S^1}_{p+q+\mu(B_{p,c})}(B_{p,c};\Lambda) & p>0 \\
0 & p\leq 0,
\end{cases}
\]
where the $S^1$-action on $W$ is the trivial one so that $H^{S^1}_*(W,\partial W;\Lambda) \cong \bigoplus_{i+j=*} H_i(W,\partial W;\Lambda) \otimes H_j(\CP^\infty;\Lambda)$.
\end{theorem}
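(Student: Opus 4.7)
The plan is to combine the single-Hamiltonian Morse-Bott spectral sequence of Section~\ref{sec:MB} with the limit construction defining $\ESH_*^{S^1}(W)$ in Section~\ref{sec:ESH2}. First, pick a cofinal sequence $(H_k)_{k\in\N}$ of admissible Hamiltonians with $H_k\le H_{k+1}$ and slopes $h_k'$ at infinity tending to $+\infty$ while avoiding Reeb periods. For each $H_k$ choose a cofinal family of $S^1$-invariant generic perturbations $H_k^{N}\co\W\times S^{2N+1}\times S^1\to\R$ supporting the equivariant Floer machinery, together with admissible homotopies in both the $k$- and $N$-directions compatible with the standard embeddings $S^{2N+1}\hookrightarrow S^{2N+3}$.

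Next, on each equivariant chain complex $\CF^{S^1,N}_*(H_k^N)$ introduce the filtration inherited from the filtration functional $F$ of Theorem~\ref{thm:action}: since $F$ depends only on the underlying loop $x\in C^\infty(S^1,\W)$ and is $S^1$-invariant, the rule $\F_p\CF^{S^1,N}_*(H_k^N)=\{(\ga,z)\suchthat F(\ga)\le G(p)+\ep\}$ is well-defined, bounded, and exhaustive. The Compatibility clause of Theorem~\ref{thm:action}, extended to the parametrized equivariant Floer equation (the $z$-component of the flow does not alter the $F$-value of the loop component, and the radial part of $u$ decreases $F$ exactly as in the non-equivariant case), ensures that continuation maps between different $(k,N)$ respect this filtration. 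This yields a direct system of filtered complexes, hence a direct system of spectral sequences.

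For each fixed $(k,N)$, the $E^1$-term splits over the Morse-Bott components $B_{p,c}$ at filtration level $p>0$. The $S^1$-equivariant extension of Lemma~\ref{lemma:lfh}, which uses the complex-linearity hypothesis on the linearized Reeb flow to ensure the local coefficient system is trivial, identifies the equivariant local Floer homology with $H^{S^1}_{*+\mu(B_{p,c})}(B_{p,c};\Lambda)$. For $p=0$, equivariant Morse theory of the $C^2$-small Hamiltonian on $W$ (on which the $S^1$-action is trivial, giving the $\CP^\infty$-factors via the Borel construction) produces $H^{S^1}_{q+n+1}(W,\partial W;\Lambda)$; in the positive quotient this column is killed, accounting for the difference between $E^1$ and $E^{1,+}$.

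Finally, direct limits are exact and therefore commute both with taking homology and with passing to the associated graded of a filtered complex. Hence the family of spectral sequences passes to the limit in $k$ and $N$, and the limit spectral sequences converge to $\ESH_*^{S^1}(W)$ and $\ESH_*^{+,S^1}(W)$ respectively. The main technical obstacle is the twofold verification of pointwise stabilization of $E^1_{p,q}$ under the direct limit: for fixed $(p,q)$, once $h_k'(r_p)$ exceeds the Reeb periods relevant to filtration level $p$, further continuation maps become isomorphisms in that bidegree, while the $N$-direction builds up $ES^1=S^\infty$, assembling the Borel construction and producing the $\CP^\infty$-factors at $p=0$. Both points, together with the equivariant Morse-Bott identification of local Floer homology, are carried out in detail in \cite{MR}.
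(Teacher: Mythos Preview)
Your proposal is correct and follows essentially the same approach as the paper. The paper itself does not give a proof of this statement but only a brief outline preceding it: for a fixed admissible Hamiltonian one builds the filtration-by-$F$ spectral sequence, one asserts that ``analogous spectral sequences exist in the equivariant setup,'' and then ``choosing a suitable sequence of Hamiltonians, we have that these spectral sequences pass to the limit''; the actual argument is deferred to \cite{MR}. Your plan is a more detailed expansion of exactly this outline---cofinal family of admissible Hamiltonians and $S^1$-invariant perturbations, $F$-filtration on the equivariant complexes, equivariant local Floer identification via Lemma~\ref{lemma:lfh}, and passage to the direct limit---and you likewise refer the technical verifications to \cite{MR}, so there is nothing to add.
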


Clearly, all the constructions above respect the filtration given by the free homotopy classes in $W$. Using the spectral sequence $E^{1,+}_{p,q}$ established in the previous proposition, one can show that if $\alpha$ is non-degenerate then, given a set $\Gamma$ of free homotopy classes in $W$, $\HC_*^\Gamma(W)$ can be obtained as the homology of a chain complex $\CC^\Gamma_*(\alpha)$ with coefficients in the universal Novikov field generated by the good closed Reeb orbits of $\alpha$ with free homotopy class in $\Gamma$ graded by the corresponding Conley-Zehnder indices. As a matter of fact, this is proved when $W$ is a Liouville domain in \cite[Proposition 3.3]{GG} and \cite[Lemma 2.1]{GU} and one can check that the proof extends to our context since it is purely algebraic.

\section{Proof of Theorem \ref{thm:main}}
\label{sec:main}
We will prove the theorem when
\[
\dim \HC_{n}^\Gamma(W) < \dim \HC_{n+jK}^\Gamma(W)
\]
for some $K \in \N$ and every $j \in \N$, since the argument in the other case is analogous up to some sign changes. Throughout the proof we will use the following theorem taken from \cite[Theorem 4.1]{GGM2}. This result can also be derived from a refinement of the common index jump theorem proved in \cite{DLW}.

\begin{theorem}[\cite{GGM2}]
\label{thm:IRT}
Let $\Phi_1,\dots,\Phi_r$ be strongly non-degenerate paths in $\Sp(2n)$ starting at the identity with positive mean index. Given $N \in \N$ and $\ell_0 \in \N$ there exist two integer vectors $(d^+,k_1^+,\dots,k_r^+)$ and $(d^-,k_1^-,\dots,k_r^-)$ such that $d^\pm,k_1^\pm,\dots,k_r^\pm$ are all divisible by $N$ and
\begin{equation*}
\cz(\Phi_i^{k_i^\pm+\ell}) = d^\pm + \cz(\Phi_i^\ell),
\end{equation*}
\begin{equation*}
\cz(\Phi_i^{k_i^-}) - d^- = -(\cz(\Phi_i^{k_i^+}) - d^+),
\end{equation*}
and
\begin{equation*}
|\cz(\Phi_i^{k_i^\pm}) - d^\pm| \leq n
\end{equation*}
for every $i \in \{1,\dots,r\}$ and $\ell \in \Z$ in the range $1\leq |\ell| \leq \ell_0$.
\end{theorem}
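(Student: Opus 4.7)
The proof rests on three ingredients: the normal-form decomposition of iterated symplectic matrices, an iterate-splitting lemma for the Conley-Zehnder index, and simultaneous Diophantine approximation on a torus.

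First I would unpack the index via the symplectic block decomposition. After a small perturbation inside the strongly non-degenerate paths, each endpoint $\Phi_i(1)$ becomes a direct sum of hyperbolic, negative-hyperbolic and elliptic blocks. The hyperbolic-type contributions to $\cz(\Phi_i^k)$ grow linearly in $k$ without ``jumps'', while each elliptic block with rotation angle $\theta_{i,j}\in (0,\pi)$ contributes a term of the form $2\lfloor k\theta_{i,j}/\pi\rfloor + 1$. Decomposing $\Delta_i = \sum_j \theta_{i,j}/\pi + (\text{linear hyperbolic part})$ yields the universal bound $|\cz(\Phi_i^k) - k\Delta_i| \leq n$, and the key iterate-splitting lemma: there exists $\delta=\delta(\ell_0,\{\theta_{i,j}\})>0$ such that whenever every fractional part $\{k\theta_{i,j}/\pi\}$ lies in $(0,\delta)$, the floor function is additive under perturbation by $\ell$, producing
\[
\cz(\Phi_i^{k+\ell}) = \cz(\Phi_i^k) + \cz(\Phi_i^\ell) \qquad \text{for every } |\ell|\leq \ell_0,
\]
and a mirrored version holds when every $\{k\theta_{i,j}/\pi\}$ lies in $(1-\delta,1)$.

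Second, I would apply Kronecker's simultaneous approximation theorem to the tuple $(\theta_{i,j}/\pi)_{i,j}$ regarded as a point of the torus $(\R/\Z)^{\sum_i r_i}$. This gives arbitrarily large common times $T\in N\cdot\N$ such that every $\{T\theta_{i,j}/\pi\}$ is within an arbitrarily prescribed small vector of targets on the Kronecker orbit closure. I would choose two such approximation times $T^+$ and $T^-$ whose fractional-part profiles are mirror images, $\{T^-\theta_{i,j}/\pi\}\approx 1-\{T^+\theta_{i,j}/\pi\}$ for every $(i,j)$. Setting $k_i^\pm = n_i T^\pm$ for a suitable integer $n_i$, and letting $d^\pm$ be the resulting common integer part of $k_i^\pm \Delta_i$ (independent of $i$, by the Kronecker choice), the splitting lemma delivers $\cz(\Phi_i^{k_i^\pm + \ell}) = d^\pm + \cz(\Phi_i^\ell)$, while the mirror relation between $T^+$ and $T^-$ forces the symmetry $\cz(\Phi_i^{k_i^-}) - d^- = -(\cz(\Phi_i^{k_i^+}) - d^+)$. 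The universal bound $|\cz(\Phi_i^{k_i^\pm}) - d^\pm| \leq n$ then follows from $|\cz(\Phi_i^k) - k\Delta_i| \leq n$ and the choice of $d^\pm$.

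The main technical obstacle is the simultaneous mirror-pair construction: one must arrange the $\pm$-approximations on the torus to be genuine reflections of one another and not merely both small, and to do so while preserving divisibility by $N$ and producing a single common $d^\pm$ valid across every $i\in\{1,\dots,r\}$. The ``it works'' input here is that the closure of the Kronecker orbit of $(\theta_{i,j}/\pi)_{i,j}$ is a closed subgroup of the torus, hence automatically invariant under $x\mapsto -x$, so any approximation from one side admits a mirror from the other side; arranging this compatibly with the divisibility and the common-integer-part constraints is the refinement over the classical Long-Zhu common index jump theorem that distinguishes \cite[Theorem 4.1]{GGM2} from its predecessors.
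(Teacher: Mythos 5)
First, a point of reference: the paper does not prove this statement at all --- it is quoted verbatim from \cite[Theorem 4.1]{GGM2} (with a pointer to \cite{DLW} for an alternative derivation), so there is no in-paper proof to compare against. Judged on its own merits, your proposal follows the standard and correct \emph{strategy} for common-index-jump-type results: block decomposition of the end matrices, floor-function formulas for the elliptic contributions, the bound $|\cz(\Phi_i^k)-k\Delta_i|\leq n$, Kronecker approximation on a torus, and the observation that the closure of the orbit is a subgroup (hence symmetric under $x\mapsto -x$), which is exactly what produces the $\pm$ pair.

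However, there is a genuine gap in the step that makes the theorem a \emph{common} index jump theorem, namely the production of a single integer $d^{\pm}$ working for all $i=1,\dots,r$ simultaneously. You apply Kronecker only to the angle vector $(\theta_{i,j}/\pi)_{i,j}$, choose one common time $T$, and set $k_i^{\pm}=n_iT^{\pm}$, asserting that $d^{\pm}$ is ``the resulting common integer part of $k_i^{\pm}\Delta_i$, independent of $i$ by the Kronecker choice.'' The Kronecker choice controls none of this: requiring $\lfloor n_iT\Delta_i\rfloor$ to be independent of $i$ forces $n_i\approx d/(T\Delta_i)$, which is a separate simultaneous Diophantine condition in the $\Delta_i$ that your approximation never imposes; and even if such $n_i$ existed, multiplying $T$ by large, $i$-dependent integers $n_i$ destroys the control on $\{k_i\theta_{i,j}/\pi\}=\{n_iT\theta_{i,j}/\pi\}$ that your splitting lemma needs. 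The correct construction (as in Long--Zhu and in \cite{GGM2,DLW}) runs the torus approximation jointly in the quantities $1/\Delta_i$ and $\theta_{i,j}/(\pi\Delta_i)$, parametrized by a single integer $q$ (a multiple of $N$), and then takes $k_i^{\pm}$ to be suitable integers near $q/\Delta_i$ --- these are in general \emph{not} all multiples of one common $T$. Relatedly, your splitting identity $\cz(\Phi_i^{k+\ell})=\cz(\Phi_i^{k})+\cz(\Phi_i^{\ell})$ is stronger than what can hold compatibly with a common $d^{\pm}$: if it held with $d^{\pm}=\cz(\Phi_i^{k_i^{\pm}})$ for every $i$, all the $\cz(\Phi_i^{k_i^{\pm}})$ would have to coincide, which is false in general and is precisely why the theorem only asserts $|\cz(\Phi_i^{k_i^{\pm}})-d^{\pm}|\leq n$. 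You also leave the divisibility of $d^{\pm}$ by $N$ unaddressed. So the architecture is right, but the heart of the argument --- the simultaneous approximation that couples the mean indices to the rotation angles --- is missing.
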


Suppose that $\alpha$ has finitely many simple orbits $\ga_1,\dots,\ga_r$ with free homotopy class in $\Gamma$.
Here, \emph{simple} means that the closed orbit is not a multiple of a closed orbit with free homotopy class in $\Gamma$ (but it can be a multiple of an orbit with free homotopy class not in $\Gamma$). Note that $r\geq 1$ since $\HC_*^\Gamma(W)$ is not trivial. Arguing by contradiction, suppose that $\cz(\ga_i^k)=\cz(\ga_j^k)$ for every $i,j \in \{1,\dots,r\}$ and $k\in \N$. Consequently, $\HC_*^\Gamma(W)$ is the homology of a chain complex generated by the good closed orbits of $\alpha$ with trivial differential.

Let $\ga=\ga_1$. By hypothesis, we have that the mean index of $\ga$, denoted by $\Delta(\ga)$, is positive (otherwise, we would have that $\HC_k^\Gamma(W)=0$ for every $k\geq n$). Since $|k\Delta(\ga)-\cz(\ga^k)|<n$ for every $k \in \N$ we conclude that $\cz(\ga^k)>-n$ for every $k \in \N$. 

\begin{lemma}
\label{lemma:index jump}
There exists $\ell_0 \in \N$ such that $\cz(\ga^{k+\ell})>\cz(\ga^k)+2n$ for every $k \in \N$ and $\ell \geq \ell_0$.
\end{lemma}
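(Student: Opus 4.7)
The key input is the classical mean index inequality: for any non-degenerate symplectic path $\Phi$ in $\Sp(2n)$ starting at the identity, one has
\[
\bigl|\cz(\Phi^k) - k\Delta(\Phi)\bigr| < n \quad \text{for all } k\in\N,
\]
where $\Delta(\Phi)$ is the mean Conley-Zehnder index. Since $\alpha$ is non-degenerate by hypothesis, this applies to the linearized Poincar\'e return map along $\ga$, so the inequality holds for $\ga$ itself.

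The strategy is then a one-line arithmetic argument. First, I would record that the author has already noted $\Delta(\ga)>0$ (otherwise the positive equivariant symplectic homology of $W$ would be supported in degrees $\leq n-1$, contradicting the growth hypothesis of Theorem~\ref{thm:main}). Next, for any $k,\ell\in\N$, apply the mean index inequality twice:
\[
\cz(\ga^{k+\ell}) > (k+\ell)\Delta(\ga) - n, \qquad \cz(\ga^k) < k\Delta(\ga) + n.
\]
Subtracting the second from the first gives
\[
\cz(\ga^{k+\ell}) - \cz(\ga^k) > \ell\,\Delta(\ga) - 2n.
\]

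Finally, I would choose $\ell_0\in\N$ with $\ell_0 > 4n/\Delta(\ga)$. Then for every $\ell\geq \ell_0$ one has $\ell\,\Delta(\ga) - 2n \geq 2n$, which together with the previous display yields $\cz(\ga^{k+\ell}) > \cz(\ga^k) + 2n$ for all $k\in\N$, as required. There is no real obstacle here; the statement is a uniform linear-growth consequence of non-degeneracy plus positivity of the mean index, and the uniformity in $k$ is automatic because the mean index inequality has a $k$-independent error $n$.
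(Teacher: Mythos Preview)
Your proof is correct and is essentially identical to the paper's own argument: both apply the mean index inequality $|\cz(\ga^k)-k\Delta(\ga)|<n$ at $k+\ell$ and at $k$, subtract, and choose $\ell_0$ at least $4n/\Delta(\ga)$. The paper presents the same chain of inequalities in three displayed lines with the same threshold.
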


\begin{proof}
Taking $\ell_0 \geq \frac{4n}{\Delta(\ga)}$ we have
\begin{align*}
\cz(\ga^{k+\ell}) & > (k+\ell)\Delta(\ga) - n \\
& > \cz(\ga^k) - 2n + \ell\Delta(\ga) \\
& \geq \cz(\ga^k) + 2n
\end{align*}
for every $\ell \geq \ell_0$ and $k \in \N$.
\end{proof}

Applying Theorem \ref{thm:IRT} to the linearized Reeb flow along $\ga$ with $N=2K$ and $\ell_0$ given by the previous lemma, we find even numbers $d$ and $k_\ga$ which are multiples of $K$ such that
\begin{equation}
\label{eq:IRT1}
\cz(\ga^{k_\ga \pm \ell})=d \pm \cz(\ga^\ell)
\end{equation}
for every $\ell$ such that $1\leq \ell \leq \ell_0$ and
\begin{equation}
\label{eq:IRT2}
d-n\leq \cz(\ga^{k_\ga})\leq d.
\end{equation}

\begin{lemma}
\label{lemma:iterate}
If $\cz(\ga^k) = d+n$ then $k=k_\ga+\ell$ for some $\ell$ such that $1\leq \ell < \ell_0$.
\end{lemma}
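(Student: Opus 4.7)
The plan is to rule out every value of $k$ except those of the form $k_\ga + \ell$ with $1 \leq \ell < \ell_0$, by playing the upper and lower bounds from \eqref{eq:IRT2} against the two kinds of estimates available: the quasi-additivity of the Conley--Zehnder index in \eqref{eq:IRT1}, which is valid in a window of radius $\ell_0$ around $k_\ga$, and the uniform linear growth encoded in Lemma \ref{lemma:index jump}, which handles shifts of size at least $\ell_0$.

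First I would dispose of $k = k_\ga$: by \eqref{eq:IRT2} we have $\cz(\ga^{k_\ga}) \leq d < d+n$. Next I would treat $k - k_\ga \geq \ell_0$ by applying Lemma \ref{lemma:index jump} with base iterate $k_\ga$ and shift $\ell = k - k_\ga$, obtaining
\[
\cz(\ga^k) > \cz(\ga^{k_\ga}) + 2n \geq (d - n) + 2n = d + n,
\]
which contradicts the assumption $\cz(\ga^k) = d + n$. Symmetrically, if $k < k_\ga$ with $k_\ga - k \geq \ell_0$, applying Lemma \ref{lemma:index jump} with base iterate $k$ and shift $k_\ga - k$ yields $\cz(\ga^{k_\ga}) > \cz(\ga^k) + 2n = d + 3n$, contradicting $\cz(\ga^{k_\ga}) \leq d$.

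The only remaining possibility with $k \leq k_\ga$ is $0 < k_\ga - k < \ell_0$. Setting $\ell = k_\ga - k$, I would apply \eqref{eq:IRT1} at $-\ell$, which is admissible since $1 \leq |-\ell| \leq \ell_0$, to get $\cz(\ga^{k_\ga - \ell}) = d - \cz(\ga^\ell)$; comparing to $d + n$ forces $\cz(\ga^\ell) = -n$, which contradicts the inequality $\cz(\ga^{\ell'}) > -n$ for every positive iterate $\ell'$ (established from $|\ell'\Delta(\ga) - \cz(\ga^{\ell'})| < n$ immediately before Lemma \ref{lemma:index jump}). After these exclusions, only $1 \leq k - k_\ga < \ell_0$ is left, which is the desired conclusion. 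There is no substantial obstacle; the only points requiring care are the interpretation of \eqref{eq:IRT1} at negative $\ell$ via $\cz(\ga^{-m}) = -\cz(\ga^m)$, and ensuring that the boundary shift $\ell_0$ itself falls inside the regime of Lemma \ref{lemma:index jump}, so the case $k - k_\ga = \ell_0$ is genuinely ruled out.
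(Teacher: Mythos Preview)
Your proof is correct and follows essentially the same approach as the paper's: rule out $k=k_\ga$ via \eqref{eq:IRT2}, rule out $|k-k_\ga|\geq \ell_0$ via Lemma~\ref{lemma:index jump} combined with \eqref{eq:IRT2}, and rule out $1\leq k_\ga-k<\ell_0$ via \eqref{eq:IRT1} together with $\cz(\ga^\ell)>-n$. The only cosmetic difference is that \eqref{eq:IRT1} as stated in the paper already incorporates the minus sign directly (so no separate interpretation of $\cz(\ga^{-m})$ is needed), and the paper handles $1\leq \ell\leq \ell_0$ rather than $1\leq \ell<\ell_0$ in the \eqref{eq:IRT1} step, with the boundary case $\ell_0$ covered redundantly.
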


\begin{proof}
It follows from Lemma \ref{lemma:index jump} and \eqref{eq:IRT2} that
\begin{align*}
\cz(\ga^{k_\ga-\ell}) & < \cz(\ga^{k_\ga})-2n \\
& \leq d -  2n
\end{align*}
and
\begin{align*}
\cz(\ga^{k_\ga+\ell}) & > \cz(\ga^{k_\ga})+2n\\
& \geq d + n
\end{align*}
for every $\ell\geq \ell_0$. Since $\cz(\ga^k)>-n$ for every $k \in \N$, we deduce from \eqref{eq:IRT1} that
\begin{align*}
\cz(\ga^{k_\ga-\ell}) & = d - \cz(\ga^\ell) \\
& < d + n
\end{align*}
for every $1\leq \ell \leq \ell_0$. Finally, notice that, by \eqref{eq:IRT2},  $\cz(\ga^{k_\ga}) \neq d+n$.
\end{proof}

Now, the fact that $d$ is a multiple of $K$ and our hypothesis imply $\HC_{d+n}^\Gamma(W)\neq 0$. This implies that there exists $k \in \N$ such that $\cz(\ga_i^k)=d+n$ for every $i \in \{1,\dots,r\}$. By Lemma \ref{lemma:iterate} and our assumptions, $k=k_\ga+\ell$ for some $\ell$ such that $1\leq \ell < \ell_0$. Thus, it follows from \eqref{eq:IRT1} that $\cz(\ga_i^\ell)=n$  for every $i \in \{1,\dots,r\}$. This means that for every contribution to $\HC_{d+n}^\Gamma(W)$, necessarily given by $\ga_i^{k_\ga+\ell}$ with $1\leq \ell < \ell_0$ and $i \in \{1,\dots,r\}$, we get a contribution to $\HC_{n}^\Gamma(W)$ given by $\ga_i^\ell$ with $1\leq \ell < \ell_0$ and $i \in \{1,\dots,r\}$ (notice here that, since $k_\ga$ is even, $\ga_i^{k_\ga+\ell}$ is good if and only if $\ga_i^\ell$ is good). But this contradicts the assumption that $\dim \HC_{n}^\Gamma(W) < \dim \HC_{n+jK}^\Gamma(W)$ for every $j \in \N$.

\section{Proof of Corollary \ref{cor:quotient}}
\label{sec:quotient}

Let $\bM$ be a contact finite quotient of $M$ and $\tau: M \to \bM$ the corresponding quotient projection. Given a non-degenerate contact form $\balpha$ on $\bM$, consider its lift $\alpha=\tau^*\balpha$. Arguing by contradiction, suppose that $\balpha$ has only one simple closed orbit. By Theorem \ref{thm:main} and our assumptions, $\alpha$ has at least two simple closed orbits $\ga_1$ and $\ga_2$ such that
\begin{equation}
\label{eq:different indices}
\cz(\ga_1^k) \neq \cz(\ga_2^k)
\end{equation}
for some $k \in \N$, where the index is computed using a non-vanishing section $\sigma$ of $\Lambda^{n+1}_\C TW$. Since $\balpha$ has only one simple closed orbit, we have that $\ga_2=\psi\circ\ga_1$ for some deck transformation $\psi$.

Let $\xi$ be the contact structure on $M$ and note that $TW|_M=\xi \oplus \xi^\om$, where $\xi^\om$ is the trivial bundle given by the symplectic orthogonal of $\xi$. Choose a compatible almost complex structure $J$ on $W$ satisfying $J(\xi)=\xi$ and $J(\xi^\om)=\xi^\om$ so that $\Lambda^{n+1}_\C TW|_M=\Lambda^n_\C\xi \otimes \xi^\om$. Then $\sigma$ induces a non-vanishing section $\sigma_\xi$ of $\Lambda^n_\C\xi$ and it turns out that the grading of $\HC_*(W)$ induced by $\sigma$ (viewing $\HC_*(W)$ as the homology of a chain complex generated by the good closed orbits of a non-degenerate contact form) is given by the Conley-Zehnder index of the closed Reeb orbits defined using a trivialization of the contact structure (along each closed orbit) induced by $\sigma_\xi$. Given a closed orbit $\ga$, denote by $\cz(\ga,\sigma_\xi)$ the corresponding Conley-Zehnder index.

The deck transformation $\psi$ induces an isomorphism $\Psi: \Lambda^n_\C\xi \to \Lambda^n_\C\xi$, where we are identifying the top exterior power of $\xi$ with respect to the complex structures $J|_\xi$ and $\psi^*(J|_\xi)$. It turns out that
\[
\cz(\ga_1^j,\Psi^*\sigma_\xi)=\cz(\ga_2^j,\sigma_\xi)
\]
for every $j \in \N$. But, since $H^1(M,\R)=0$, we have that $\Psi^*\sigma_\xi$ and $\sigma_\xi$ are homotopic; see \cite[Lemma 4.3]{McL2}. Hence,
\[
\cz(\ga_1^j,\sigma_\xi)=\cz(\ga_2^j,\sigma_\xi)
\]
for every $j \in \N$, contradicting \eqref{eq:different indices}.

\begin{remark}
\label{rmk:quotient}
If the first Chern class of the contact structure $\bxi$ on $\bM$ vanishes, we do not need that $\Psi^*\sigma_\xi$ and $\sigma_\xi$ are homotopic to obtain a contradiction. Indeed, choose a section $\sigma_\bxi$ of $\Lambda^n_\C\bxi$ and consider its lift $\widetilde\sigma_\xi$ to $\Lambda^n_\C\xi$. Since $\widetilde\sigma_\xi$ is invariant by deck transformations, we have that
\[
\cz(\ga_1^j,\widetilde\sigma_\xi)=\cz(\ga_1^j,\Psi^*\widetilde\sigma_\xi)=\cz(\ga_2^j,\widetilde\sigma_\xi)
\]
for every $j \in \N$. However, note that $\widetilde\sigma_\xi$ is not necessarily induced by the section $\sigma$ of $\Lambda^{n+1}_\C TW$. In order to fix this, we claim that the equality $\cz(\ga_1^j,\widetilde\sigma_\xi)=\cz(\ga_2^j,\widetilde\sigma_\xi)$ implies that
\[
\cz(\ga_1^j,\sigma_\xi)=\cz(\ga_2^j,\sigma_\xi)
\]
for any section $\sigma_\xi$ of $\Lambda^n_\C\xi$. As a matter of fact, given a periodic orbit $\ga$ we have that the difference $\cz(\ga;\widetilde\sigma_\xi)-\cz(\ga;\sigma_\xi)$ depends only on the homology class $[\ga] \in H_1(M;\R)$; see \cite[Lemma 4.3]{McL2} and \cite[Remark 5.1]{MR}. Hence, it is enough to have that $\ga_1^j$ and $\ga_2^j$ represent the same homology class in $H_1(M;\R)$. But the deck transformations act trivially in $H_1(M;\R)$ and consequently $[\ga_1^j]=[\ga_2^j]$ for every $j \in \N$.

\end{remark}

\section{Proof of Corollary \ref{cor:displaceable}}
\label{sec:displaceable}

In this section we prove Corollary \ref{cor:displaceable} showing that the compact domain $W \subset X$ bounded by $M$ satisfies the hypothesis of Theorem \ref{thm:main}. More precisely, we have the following result.

\begin{proposition}
\label{prop:HC displaceable}
Let $(M^{2n+1},\xi)$ be a contact manifold admitting a displaceable exact contact embedding into an exact symplectic manifold $X$ which is convex at infinity and satisfies $c_1(TX)|_{\pi_2(X)}=0$. Denote by $W$ the compact region in $X$ bounded by $M$. We have that
\[
\dim \HC^0_{n+2k}(W)=\dim \HC^0_n(W)+1
\]
for every $k\in \N$.
\end{proposition}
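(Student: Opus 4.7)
The plan is to reduce the computation of $\HC^0_*(W)$ to a purely topological computation of $S^1$-equivariant singular homology of the pair $(W,\partial W)$. The displaceability of $M$ inside $X$ will force the vanishing of the full equivariant symplectic homology of $W$ (restricted to contractible orbits), after which only the constant orbits contribute to the positive equivariant symplectic homology.

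First, I would invoke the standard vanishing theorem for symplectic homology of displaceable fillings (due essentially to Viterbo, and in the convex-at-infinity exact setup to Cieliebak--Frauenfelder--Oancea and Ritter) to conclude that $\SH_*(W)=0$, and I would argue that the proof respects the splitting by free homotopy classes, since the displacing Hamiltonian lives on $X$ and Floer continuation trajectories preserve homotopy classes. Hence the contractible-class version $\SH_*^{\{0\}}(W)$ also vanishes. Passing through the Borel spectral sequence relating $\SH_*$ to the $S^1$-equivariant version, I obtain the corresponding equivariant vanishing for the contractible component.

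Next, I would apply the tautological long exact sequence coming from the short exact sequence of action-filtered Floer complexes $0 \to \CF^{\leq 0} \to \CF \to \CF^{+} \to 0$, taken equivariantly, restricted to contractible orbits, and passed to the direct limit. Given the previous vanishing, this collapses to an isomorphism identifying $\HC^0_m(W)$ with the equivariant homology of the action-$\leq 0$ subcomplex (contractible part) in degree $m-1$. The Morse--Bott computation of Section \ref{sec:MB}, applied to an admissible Hamiltonian that is $C^2$-small Morse on $W \cup [1,r_0]\times \partial W$, then identifies this homology with $H^{S^1}_{m+n}(W,\partial W;\Q)$, since all constant orbits are contractible and the restriction to $\Gamma=\{0\}$ leaves the $p=0$ column of the Morse--Bott spectral sequence unchanged. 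Thus $\HC^0_m(W) \iso H^{S^1}_{m+n}(W,\partial W;\Q)$.

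Finally, triviality of the $S^1$-action on $W$ gives $H^{S^1}_\ell(W,\partial W;\Q) \iso \bigoplus_{j \geq 0} H_{\ell-2j}(W,\partial W;\Q)$. Writing $h_\ell := \dim H_\ell(W,\partial W;\Q)$ and using Poincar\'e--Lefschetz duality together with the connectedness of $W$, I have $h_{2n+2}=1$ and $h_\ell=0$ for $\ell \notin \{0,\dots,2n+2\}$. For $m = n + 2k$ the nonzero summands occur for $0 \leq 2n+2k-2j \leq 2n+2$: when $k=0$ the top index $\ell=2n+2$ is unreachable (it would require $j=-1$), whereas for $k \geq 1$ it contributes exactly $h_{2n+2}=1$, and the remaining even indices $\{0,2,\dots,2n\}$ appear with the same multiplicities in both sums. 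This yields $\dim \HC^0_{n+2k}(W) = \dim \HC^0_n(W) + 1$ for every $k \in \N$, as desired. The main obstacle I anticipate is in the first step: verifying carefully that the displaceability-based vanishing of $\SH^{\{0\}}_*(W)$ is indeed available in the convex-at-infinity exact setup (rather than only for Liouville domains) and that it transfers cleanly to the $S^1$-equivariant setting restricted to the contractible free homotopy class; once this is in hand, the remaining steps are routine algebraic and Morse--Bott unwinding.
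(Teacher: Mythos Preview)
Your proposal is correct and follows essentially the same route as the paper's proof: displaceability forces the vanishing of $\SH^{S^1}_*(W)$ (the paper cites \cite[Theorem 13.4]{Rit2} and \cite[Theorem 1.2]{BO17} for this directly in the equivariant setting, rather than going through non-equivariant vanishing plus a Borel spectral sequence as you do), the Viterbo long exact sequence then yields $\HC^0_*(W)\cong\bigoplus_{i+j=*+n}H_i(W,\partial W;\Q)\otimes H_j(\CP^\infty;\Q)$, and the final dimension count is identical to yours, using $H_{2n+2}(W,\partial W;\Q)\cong\Q$. Your anticipated obstacle is handled precisely by those two references.
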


\begin{proof}
Note that, since $X$ is exact, we can take the positive equivariant symplectic homology with rational coefficients. Since $M$ is displaceable, we have that the $S^1$-equivariant symplectic homology of $W$ vanishes (see \cite[Theorem 13.4]{Rit2} and \cite[Theorem 1.2]{BO17}). Using this and the Viterbo sequence (see e.g.~\cite{FSvK12,Ka}) we have that
\[
\HC^0_*(W) \cong \bigoplus_{i+j=*+n} H_i(W,\partial W;\Q) \otimes H_j(\CP^\infty;\Q).
\]
Therefore,
\[
\dim \HC^0_n(W) = \sum_{i=1}^{n} \dim H_{2i}(W,\partial W;\Q)
\]
and
\[
\dim \HC^0_{n+2k}(W) = \sum_{i=1}^{n+1} \dim H_{2i}(W,\partial W;\Q)
\]
for every $k\in \N$. Since $H_{2n+2}(W,\partial W;\Q) \cong \Q$, we conclude the desired result.
\end{proof}

\section{Proof of Corollary \ref{cor:cosphere}}
\label{sec:cosphere}

Let $\alpha$ be a non-degenerate contact form on $(S^* N,\xi_{can})$. We will split the proof according to the three hypotheses. In what follows, we will identify, without fear of ambiguity, $N$ with the subset of constant loops in $\LL N/S^1$ and $\LL N$. Moreover, we will take a non-vanishing section of $\Lambda^{n+1}_\C TD^*N$ induced from the choice of a volume form on $N$ whenever $N$ is orientable.

\subsubsection*{\bf (i) $N$ satisfies the first hypothesis}\hfill\\[-2ex]

Let $\NN \xrightarrow{\tau} N$ be the universal covering of $N$. Recall that $H_*(\LL \NN/S^1, \NN;\Q)$ is asymptotically unbounded if there exists a sequence of integers $k_i \to \infty$ such that $\dim H_{k_i}(\LL \NN/S^1, \NN;\Q) \to \infty$ as $i\to\infty$; otherwise, we call $H_*(\LL \NN/S^1, \NN;\Q)$ asymptotically bounded. It is not hard to see that the dimension of $H_*(\LL \NN/S^1,\NN;\Q)$ is asymptotically unbounded if and only if the dimension of $H_*(\LL \NN;\Q)$ is asymptotically unbounded.

Note that since $N$ is spin so is $\NN$: the corresponding second Stiefel-Whitney classes satisfy $w_2(\NN)=\tau^*w_2(N)=0$. By the isomorphism \eqref{eq:CL}, if $H_*(\LL \NN/S^1, \NN;\Q)$ is asymptotically unbounded so is $\HC_*(D^*\NN)$. Thus, in this case results from \cite{HM,McL} establish that every Reeb flow on $S^*\NN$ has infinitely many simple closed orbits. This implies that every Reeb flow on $S^*N$ has infinitely many simple closed orbits since $S^*N$ is a contact finite quotient of $S^*\NN$.

Assume then that $H_*(\LL \NN/S^1, \NN;\Q)$ is asymptotically bounded and let $m$ be the dimension of $N$. By Corollary \ref{cor:quotient}, using again the fact that $S^*N$ is a contact finite quotient of $S^*\NN$, it is enough to show that there exists $K \in \N$ such that
\[
\dim H_{m-1}(\LL \NN/S^1, \NN;\Q) < \dim H_{m-1+jK}(\LL \NN/S^1, \NN;\Q)\qquad \textrm{for every $j \in \N$.}
\]
(Note that $\pi_1(S^*\NN)$ is finite because $\NN$ is simply connected. Therefore, $H^1(S^*\NN,\R)=0$.) So, assume from now on that $N$ is simply connected. A result due to Vigu\'e-Poirrier and Sullivan \cite{VS} establishes that the dimension of $H_*(\LL N;\Q)$ is asymptotically bounded exactly when 
\[
H^*(N;\Q) \cong T_{d,h+1}(x)
\]
for some $d$ and $h$, where $T_{d,h+1}(x)$ is the truncated polynomial algebra with generator $x$ of degree $d$ and height $h+1$. In this case, Rademacher \cite{Rad2} computed $H_*(\LL N/S^1,N;\Q)$. First of all, $m=dh$. When $d$ is even, \cite[Theorem 2.4]{Rad2} tells us that the Poincar\'e series of $(\LL N/S^1,N)$ (for homology with rational coefficients) is given by
\begin{equation*}
P(\LL N/S^1,N;\Q)(t) = t^{d-1}\bigg(\frac{1}{1-t^2}+\frac{t^{d(h+1)-2}}{1-t^{d(h+1)-2}}\bigg)\frac{1-t^{dh}}{1-t^d}.
\end{equation*}
Therefore, we deduce from this that
\[
\dim H_{dh-1}(\LL N/S^1,N;\Q)=h\quad\text{and}\quad \dim H_{dh-1+j(d(h+1)-2)}(\LL N/S^1,N;\Q)=h+1
\]
for every $j \in \N$. When $d$ is odd, $h$ has to be equal to one and $N$ is rationally homotopic to the sphere $S^d$. The corresponding Poincar\'e series is given by (see  \cite[Remark 2.5]{Rad2})
\begin{equation*}
P(\LL N/S^1,N;\Q)(t) = t^{d-1}\bigg(\frac{1}{1-t^2}+\frac{t^{d-1}}{1-t^{d-1}}\bigg).
\end{equation*}
Consequently,
\[
\dim H_{d-1}(\LL N/S^1,N;\Q)=1\quad\text{and}\quad \dim H_{d-1+j(d-1)}(\LL N/S^1,N;\Q)=2
\]
for every $j \in \N$.

\subsubsection*{\bf (ii) $N$ satisfies the second hypothesis}\hfill\\[-2ex]

We will show that, under hypothesis (ii), $\HC^0_*(D^*N)\neq 0$ and $\HC^a_*(D^*N)\neq 0$ for any non-trivial homotopy class $a$. This readily implies the result in this case.

The non-triviality of $\HC^a_*(D^*N)$ easily follows from \eqref{eq:CL filtered} and the difficult part is to show that $\HC^0_*(D^*N)\neq 0$. In order to prove this, we first need the following well known result for which we give a proof for the convenience of the reader. For a shorter notation, we will suppress the superscript zero in $\LL^0 N$.

\begin{proposition}
\label{prop:relhom}
We have that $\pi_{j+1}(N) \cong \pi_{j}(\LL N,N)$ for every $j\geq 1$.
\end{proposition}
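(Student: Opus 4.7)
The plan is to exploit the evaluation fibration
\[
\Omega N \longrightarrow \Lambda^0 N \xrightarrow{\ \mathrm{ev}\ } N,\qquad \gamma \mapsto \gamma(0),
\]
whose fiber is the based loop space (restricting to the component of contractible loops corresponds to taking the path component of constants). The inclusion $\iota\co N \hookrightarrow \Lambda^0 N$ of constant loops is a section of $\mathrm{ev}$, i.e.\ $\mathrm{ev}\circ \iota = \mathrm{id}_N$. Consequently, in the long exact sequence of the fibration,
\[
\cdots \to \pi_{j+1}(N) \to \pi_j(\Omega N) \to \pi_j(\Lambda^0 N) \xrightarrow{\mathrm{ev}_*} \pi_j(N) \to \cdots,
\]
the map $\mathrm{ev}_*$ is a split surjection for every $j \geq 1$, so the connecting homomorphism vanishes and one obtains a split short exact sequence
\[
0 \to \pi_j(\Omega N) \to \pi_j(\Lambda^0 N) \xrightarrow{\mathrm{ev}_*} \pi_j(N) \to 0.
\]

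The next step is to feed this into the long exact sequence of the pair $(\Lambda^0 N,N)$. Because $\iota$ is split by $\mathrm{ev}$, the inclusion $\iota_*\co \pi_j(N) \to \pi_j(\Lambda^0 N)$ is injective for every $j\geq 1$. Hence, in the sequence
\[
\pi_j(N) \xrightarrow{\iota_*} \pi_j(\Lambda^0 N) \to \pi_j(\Lambda^0 N,N) \to \pi_{j-1}(N) \xrightarrow{\iota_*} \pi_{j-1}(\Lambda^0 N),
\]
the last arrow is injective and therefore the connecting map $\pi_j(\Lambda^0 N,N) \to \pi_{j-1}(N)$ is zero, yielding
\[
\pi_j(\Lambda^0 N,N) \;\cong\; \pi_j(\Lambda^0 N)\big/\iota_*\pi_j(N).
\]
Combined with the splitting from the fibration, the right-hand side is canonically isomorphic to $\pi_j(\Omega N)$.

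Finally, the standard loop-suspension adjunction gives $\pi_j(\Omega N)\cong \pi_{j+1}(N)$ for every $j\geq 1$, and composing these isomorphisms yields the desired identification
\[
\pi_j(\Lambda^0 N,N) \;\cong\; \pi_{j+1}(N).
\]
There is essentially no obstacle; the only point requiring a little care is the case $j=1$, where one should note that $\pi_1(\Lambda^0 N)$ may fail to be abelian but the splitting still makes sense as groups (the section realises $\pi_1(\Lambda^0 N)$ as a semidirect product $\pi_1(\Omega N)\rtimes \pi_1(N)$), and $\pi_1(\Lambda^0 N,N)$ is a pointed set whose identification with the coset space $\pi_1(\Lambda^0 N)/\iota_*\pi_1(N)\cong \pi_1(\Omega N)\cong \pi_2(N)$ follows exactly as above.
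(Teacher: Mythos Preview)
Your proof is correct and follows essentially the same route as the paper: both use the evaluation fibration $\Omega N \to \Lambda^0 N \to N$ with its section by constant loops, the resulting split short exact sequence on homotopy groups, the long exact sequence of the pair $(\Lambda^0 N,N)$ with vanishing connecting map, and the identification $\pi_j(\Omega N)\cong\pi_{j+1}(N)$. The only minor difference is in the treatment of $j=1$: the paper invokes the standing hypotheses $\pi_1(N)\cong\Z$ and $\pi_2(N)=0$ to make $\pi_1(\Lambda^0 N)$ abelian (so both sides are trivial), whereas you argue directly via the semidirect product and coset description, which is slightly more general but yields only a bijection of pointed sets in that degree---harmless here since $\pi_2(N)=0$ anyway.
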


\begin{proof}
Let $\Om N$ be the based loop space of contractible loops. The canonical fibration $\Om N \hookrightarrow \LL N \xrightarrow{\text{ev}} N$ admits a section $N \xhookrightarrow{i} \LL N$ given by the inclusion of constant loops, and this implies that the associated homotopy long exact sequence reduces to the split short exact sequence
\[
\begin{tikzcd}
0 \ar[r] & \pi_*(\Om N) \ar[r,"\alpha"] & \pi_*(\LL N) \ar[r,"\beta"] & \pi_*(N) \ar[bend left=30]{l}{i} \ar[r] & 0
\end{tikzcd}
\]
so that $\beta \circ i = \Id$, where, for a shorter notation, we are using $i$ to denote both the inclusion $N \xhookrightarrow{i} \LL N$ and its map induced on the homotopy groups. In particular, $i$ is injective.

Since the sequence splits, we have that $\pi_j(\LL N) \cong \pi_j(\Om N) \oplus \pi_j(N)$ for every $j\geq 2$ and $\pi_1(\LL N) \cong \pi_1(\Om N) \rtimes \pi_1(N)$. However, using the well known isomorphism $\pi_*(\Om N) \cong \pi_{*+1}(N)$ and our assumptions that $\pi_1(N)\cong \Z$ and $\pi_2(N)=0$, we have that $\pi_j(\LL N) \cong \pi_j(\Om N) \oplus \pi_j(N) \cong \pi_{j+1}(N) \oplus \pi_j(N)$ for every $j\geq 1$ and consequently the fundamental group of $\LL N$ is abelian.

Thus, we have an inverse on the left of $\alpha$, $p: \pi_*(\LL N) \to \pi_*(\Om N)$, given by
\begin{equation}
\label{eq:map p}
p(b)=\alpha^{-1}(b-i(\beta(b)))
\end{equation}
and the commutative diagram
\begin{equation}
\label{eq:diag1}
\begin{tikzcd}
0 \ar[r] & \pi_*(\Om N) \ar[r,"\alpha"] & \pi_*(\LL N) \ar[r,"\beta"]\ar[d,"\psi"] & \pi_*(N) \ar[r] & 0 \\
& & \pi_*(\Om N) \oplus \pi_*(N) \ar[ul,"\tau_1"]\ar[ur,swap,"\tau_2"]
\end{tikzcd}
\end{equation}
where $\psi(b)=(p(b),\beta(b))$ is an isomorphism and $\tau_1$ and $\tau_2$ are the projections onto the first and second factors respectively.

On the other hand, the homotopy long exact sequence associated to the pair $(\LL N,N)$ furnishes the commutative diagram
\begin{equation}
\label{eq:diag2}
\begin{tikzcd}
[
column sep=small,row sep=small,
ar symbol/.style = {draw=none,"\textstyle#1" description,sloped},
isomorphic/.style = {ar symbol={\cong}},
]
\cdots \ar[r] & \pi_{k+1}(\LL N,N) \ar[r,"0"] & \pi_k(N) \ar[r,"i"]\ar[dr,"\vr"] & \pi_k(\LL N) \ar[r,"j"]\ar[d,"\psi"] & \pi_{k}(\LL N,N) \ar[r,"0"] & \cdots \\
& & & \pi_k(\Om N) \oplus \pi_k(N) \\
& & & \pi_{k+1}(N) \oplus \pi_k(N)  \ar[u,isomorphic]
\end{tikzcd}
\end{equation}
where $\vr:=\psi\circ i$ and we are using the isomorphism $\pi_k(\Om N) \cong \pi_{k+1}(N)$. Note that the map $\pi_{*}(\LL N,N) \to \pi_{*-1}(N)$ vanishes because $i$ is injective.

We claim that $\vr(c)=(0,c)$. In order to prove this, let us first prove that
\[
\tau_1 \circ \vr(c)=0.
\]
for every $c \in \pi_k(N)$. Indeed, we have that
\begin{align*}
\tau_1 \circ \vr(c) & = \tau_1 \circ \psi \circ i(c) \\
& = p(i(c)) \\
& = \alpha^{-1}(i(c)-i \circ \beta \circ i(c)) \\
& = \alpha^{-1}(i(c)-i(c)) \\
& = 0
\end{align*}
where the second equality follows from the fact that $\psi(i(c))=(p(i(c)),\beta(i(c)))$, the third equality holds because of \eqref{eq:map p} and the fourth equality follows from the relation $\beta \circ i = \Id$. Now, let us show that
\[
\tau_2 \circ \vr(c)=c.
\]
for every $c \in \pi_k(N)$. This is a consequence of the relation
\[
\tau_2 \circ \vr = \tau_2 \circ \psi \circ i = \beta \circ i = \Id
\]
where the second equality follows from the commutative diagram \eqref{eq:diag1}.

Finally, we conclude from the claim and the diagram \eqref{eq:diag2} that $j$ induces an isomorphism between $\pi_{k+1}(N)$ and $\pi_{k}(\LL N,N)$ for every $k\geq 1$, as desired.
\end{proof}

Now, we need the following result which is a rational relative Hurewicz theorem for the pair $(\LL N,N)$. Note that, by Proposition \ref{prop:relhom}, we have that $\pi_j(\LL N,N)$ is an abelian group for every $j\geq 1$.

\begin{proposition}
\label{prop:hurewicz}
Suppose that there exists $m\geq 1$ such that $\pi_j(\LL N,N) \otimes \Q=0$ for every $ j<m$ and $\pi_m(\LL N,N) \otimes \Q \neq 0$. Then $H_j(\LL N,N;\Q)=0$ for every $j<m$ and
\[
H_{m}(\LL N,N;\Q) \cong \pi'_{m}(\LL N,N) \otimes \Q
\]
where $\pi'_{m}(\LL N,N)$ is obtained from $\pi_m(\LL N, N)$ by factoring out the action of $\pi_1(N)$.
\end{proposition}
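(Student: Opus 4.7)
The strategy is to lift the pair $(\LL N, N)$ to the universal cover of $N$, where the pair becomes simply connected, apply the classical relative rational Hurewicz theorem there, and descend via the Cartan-Leray spectral sequence of the resulting $\pi_1(N)\cong\Z$-covering.

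Since $\pi_2(N)=0$, the split short exact sequence used in the proof of Proposition \ref{prop:relhom} gives $\pi_1(\LL N)\cong\pi_1(N)\cong\Z$, with the inclusion $N\hookrightarrow\LL N$ of constant loops inducing an isomorphism on $\pi_1$. Let $\NN\to N$ be the universal cover; then $\LL\NN$ is simply connected, because $\pi_1(\LL\NN)\cong\pi_2(\NN)\oplus\pi_1(\NN)=0$. The pointwise deck action of $\pi_1(N)$ on loops makes $(\LL\NN,\NN)\to(\LL N,N)$ a regular $\Z$-covering of pairs, and a direct lifting argument (using that disks $D^j$ are simply connected and that $\pi_1(N)\to\pi_1(\LL N)$ is an isomorphism) produces an isomorphism $\pi_j(\LL\NN,\NN)\cong\pi_j(\LL N,N)$ of $\Z[\pi_1(N)]$-modules for every $j\geq 2$. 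The vanishing hypothesis therefore transfers upstairs, and the classical relative rational Hurewicz theorem for the simply connected pair $(\LL\NN,\NN)$ yields $H_j(\LL\NN,\NN;\Q)=0$ for $j<m$ and
\[
H_m(\LL\NN,\NN;\Q)\cong \pi_m(\LL\NN,\NN)\otimes\Q\cong \pi_m(\LL N,N)\otimes\Q
\]
as $\Q[\pi_1(N)]$-modules with respect to the deck action.

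To descend, I invoke the Cartan-Leray spectral sequence
\[
E^2_{p,q}=H_p(\Z;H_q(\LL\NN,\NN;\Q))\implies H_{p+q}(\LL N,N;\Q).
\]
Since $\Z$ has cohomological dimension one, only the columns $p=0,1$ are non-zero and the sequence degenerates into short exact sequences $0\to H_j(\LL\NN,\NN;\Q)_{\pi_1(N)}\to H_j(\LL N,N;\Q)\to H_{j-1}(\LL\NN,\NN;\Q)^{\pi_1(N)}\to 0$ for every $j$. For $j<m$ both flanks vanish, giving $H_j(\LL N,N;\Q)=0$; for $j=m$ the right flank vanishes (since $H_{m-1}(\LL\NN,\NN;\Q)=0$), so
\[
H_m(\LL N,N;\Q)\cong (\pi_m(\LL N,N)\otimes\Q)_{\pi_1(N)}=\pi'_m(\LL N,N)\otimes\Q,
\]
which is the desired identification.

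The main obstacle is to verify carefully that every identification in this chain is $\pi_1(N)$-equivariant, so that the coinvariants appearing in the final formula correspond to the intrinsic $\pi_1(N)$-action on $\pi_m(\LL N,N)$ that defines $\pi'_m$. This reduces to checking that the deck transformation action on $\pi_m(\LL\NN,\NN)$ matches, under the lifting isomorphism of relative homotopy groups, the standard action of $\pi_1(N)$ on $\pi_m(\LL N, N)$; this follows from the definition of the deck action on loops as pointwise application of deck transformations of $N$, together with the naturality of the Hurewicz map with respect to covering transformations.
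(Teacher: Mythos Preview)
Your argument is correct and takes a genuinely different route from the paper's. The paper works with the homotopy fiber $F\simeq\Omega^2N$ of the inclusion $N\hookrightarrow\LL N$, proves a rational Hurewicz theorem for $F$ directly (Lemma~6.4, using that $F$ is an H-space so $\pi_1(F)$ acts trivially on the homology of its universal cover), and then reads off the result from the Serre spectral sequence of $F\to N'\to\LL N$. You instead pass to the universal cover $\NN$ of the base, exploit that $\pi_2(N)=0$ forces $\LL\NN$ to be simply connected, apply the relative rational Hurewicz theorem to the simply connected pair $(\LL\NN,\NN)$, and then descend through the Cartan--Leray spectral sequence of the $\Z$-covering $(\LL\NN,\NN)\to(\LL N,N)$, which collapses because $\Z$ has cohomological dimension one.

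Two comments on presentation. First, the ``classical relative rational Hurewicz theorem for simply connected pairs'' is not quite textbook in the same sense as the absolute version; it is most cleanly obtained by rationalizing the pair (both $\NN$ and $\LL\NN$ are simply connected, so this is unproblematic) and applying the integral relative Hurewicz theorem to $((\LL\NN)_\Q,\NN_\Q)$, which is then genuinely $(m-1)$-connected. You might make this step explicit. Second, your equivariance verification is correct: the deck action on $\pi_m(\LL\NN,\NN)$ is conjugation by change of basepoint along a lift of a loop in $N$, and under $p_*$ this becomes exactly the standard $\pi_1(N)$-action on $\pi_m(\LL N,N)$; naturality of the Hurewicz map then transports this to homology.

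As for what each approach buys: yours is shorter and more transparent under the standing hypotheses of this section, but it leans essentially on $\pi_1(N)\cong\Z$ (for the two-column collapse of Cartan--Leray) and on $\pi_2(N)=0$ (so that $\LL\NN$ is simply connected). The paper's homotopy-fiber argument, while longer, does not rely on the cohomological dimension of $\pi_1(N)$ and isolates the H-space structure of $\Omega^2N$ as the mechanism killing the monodromy, which is conceptually useful beyond the present setting.
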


\begin{proof}
Recall that given a continuous map $f: X \to Y$ between topological spaces $X$ and $Y$ one can associate to it a fibration $F \to X' \to Y$ whose total space $X'$ is homotopically equivalent to $X$ and with fiber $F$ called the homotopy fiber. Consider such a fibration $F\to N' \to \LL N$ associated to the map $i: N \to \LL N$. 

\begin{lemma}
\label{lemma:homotopy F}
The homotopy fiber $F$ is homotopy equivalent to $\Om^2N$. In particular, we have that
\[
\pi_j(F) \cong \pi_{j+2}(N) \cong \pi_{j+1}(\LL N,N)
\]
for every $j \geq 0$. 
\end{lemma}

\begin{proof}
Consider the commutative diagram
\[
\begin{tikzcd}
[
column sep=small,row sep=small,
ar symbol/.style = {draw=none,"\textstyle#1" description,sloped},
isomorphic/.style = {ar symbol={\cong}},
]
N \ar[r,"i"]\ar[d,"\text{ev}"] & \LL N\ar[d,"\text{ev}"]  \\
N \ar[r,"\Id"] & N.
\end{tikzcd}
\]
This diagram has a fibre extension (up to homotopy) given by a $3 \times 3$ diagram such that all the rows and columns are fibration sequences (c.f. \cite[Section 3.2]{Nei})
\[
\begin{tikzcd}
[
column sep=small,row sep=small,
ar symbol/.style = {draw=none,"\textstyle#1" description,sloped},
isomorphic/.style = {ar symbol={\cong}},
]
\Om^2N \ar[r]\ar[d] & * \ar[r]\ar[d] & \Om N\ar[d] \\
F \ar[r]\ar[d] & N \ar[r]\ar[d] & \LL N\ar[d]  \\
* \ar[r] & N \ar[r] & N.
\end{tikzcd}
\]
where $*$ denotes a contractible space. From the first column we conclude that $F$ is homotopy equivalent to $\Om^2N$.

Consequently, $\pi_j(F) \cong \pi_{j}(\Om^2N) \cong \pi_{j+2}(N)$ for every $j \geq 0$. The isomorphism $\pi_j(F) \cong \pi_{j+1}(\LL N,N)$ follows from Proposition \ref{prop:relhom} (c.f. Remark \ref{rmk:relhom}).
\end{proof}

\begin{remark}
\label{rmk:relhom}
Since $F$ is the homotopy fiber of the inclusion $N \hookrightarrow \LL N$, it is well known that $\pi_*(F) \cong \pi_{*+1}(\LL N,N)$ (see, for instance, \cite[Page 155]{DK}). Consequently, one could also deduce Proposition \ref{prop:relhom} from the isomorphism $\pi_*(F) \cong \pi_{*+2}(N)$ established in the previous lemma.
\end{remark}

Therefore, we conclude from the previous lemma and our assumptions that $m\geq 2$, $F$ is connected and its fundamental group is abelian. Moreover, we have that $\pi_{m-1}(F) \otimes \Q \neq 0$ and, if $m>2$, $\pi_j(F) \otimes \Q=0$ for every $1\leq j \leq m-2$. The next lemma is a rational Hurewicz theorem for $F$.

\begin{lemma}
\label{lemma:hurewicz F}
We have that $H_j(F;\Q) \cong \pi_{j}(F) \otimes \Q$ for every $1 \leq j\leq m-1$.
\end{lemma}

\begin{proof}
Suppose initially that $G:=\pi_1(F)$ is torsion and let $\F \xrightarrow{\tau} F$ be the universal covering. By the rational Hurewicz theorem for simply connected spaces,
\begin{equation}
\label{eq:hurewicz FF}
H_q(\F;\Q) \cong \pi_q(\F) \otimes \Q
\end{equation}
for every $1\leq q\leq m-1$; see, for instance, \cite{KK}. Let $BG$ be the classifying space of $G$ and $\vr: F \to BG$ be the classifying map of $\tau$. The fibration associated to $\vr$ is
\[
\F \to F' \to BG
\]
with homotopy fiber $\F$ and total space $F'$ homotopically equivalent to $F$. Consider the corresponding Serre spectral sequence $E^r_{p,q}$ so that
\begin{equation}
\label{eq:2page}
E^2_{p,q} \cong H_p(BG;\underline{H_q}(\F;\Q)),
\end{equation}
where $\underline{H_q}(\F;\Q)$ is a local system with coefficients $H_q(\F;\Q)$. We claim that $H_p(BG;\Q)=0$ for all $p\geq 1$. Indeed, since $G$ is a finite abelian group, we can write
\[
G = \oplus_{i=1}^r \Z_{k_i}
\]
for positive integers $k_1,\dots,k_r$. Thus, we have the homotopy equivalence
\[
BG \cong \prod_{i=1}^r B\Z_{k_i}.
\]
But clearly $H_p(B\Z_{k_i};\Q)=0$ for every $p>0$ and therefore the claim follows from Kunneth theorem. 

Hence, we have from \eqref{eq:2page} that
\[
E^2_{p,q} = 0\quad\text{for all}\ p\geq 1\ \text{and}\ q\geq 0
\]
which implies that
\[
H_q(F;\Q) \cong E^\infty_{0,q} \cong E^2_{0,q} \cong H_0(BG;\underline{H_q}(\F;\Q)).
\]
But $H_0(BG;\underline{H_q}(\F;\Q))$ is isomorphic to $H_q(\F;\Q)$ factored out by the action of $\pi_1(BG)\cong G$ acting as the deck transformations of $\F \xrightarrow{\tau} F$.

By Lemma \ref{lemma:homotopy F}, $F$ is homotopy equivalent to $\Om^2N$ and therefore is an H-space. (Recall that a topological space $X$ is an H-space if there is a continuous multiplication map $\mu: X \times X \to X$ and an identity element $e \in X$ such that the maps $\mu(\cdot,e)$ and $\mu(e,\cdot)$ are homotopic to the identity through maps $(X,e) \to (X,e)$.) Therefore, $G$ acts trivially on $H_*(\F;\Q)$. As a matter of fact, note that $\F$ is also an H-space and $\tau$ is an H-map (i.e. if $\mu$ and $\tilde\mu$ are the multiplications on $F$ and $\F$ respectively then the maps $\tau\circ\tilde\mu$ and $\mu\circ (\tau\times\tau)$ are homotopic). Let $G_e \subset \F$ be the fiber of $\tau$ over the identity element in $F$ so that there is a natural identification $G\cong G_e$. Then the deck transformation of $\tau$ corresponding to $g\in G$ is given by $\tilde\mu(\tilde g,\cdot)$ where $\tilde g \in G_e$ corresponds to $g$ via the identification $G\cong G_e$. Since $\F$ is path connected, such map must be homotopic to the identity and therefore $G$ acts trivially on $H_*(\F;\Q)$.

Thus, by \eqref{eq:hurewicz FF},
\[
H_0(BG;\underline{H_q}(\F;\Q)) \cong H_q(\F;\Q)\cong \pi_q(\F) \otimes \Q
\]
for every $1\leq q \leq m-1$. But $\pi_j(\F) \cong \pi_j(F)$ for every $j\geq 2$ and $\pi_1(\F) \otimes \Q =\pi_1(F) \otimes \Q = 0$ and therefore
\[
\pi_*(\F) \otimes \Q \cong \pi_*(F) \otimes \Q
\]
implying that
\[
H_q(F;\Q) \cong \pi_q(F)\otimes \Q
\]
for every $1\leq q\leq m-1$, proving the lemma when $\pi_1(F)$ is torsion.

Finally, when $\pi_1(F)$ is not torsion, we have by Hurewicz theorem and Lemma \ref{lemma:homotopy F} that $H_1(F;\Z) \cong \pi_1(F) \cong \pi_2(\LL N,N)$ (note that $\pi_1(F)$ is abelian) and hence $m=2$ and
\[
H_1(F;\Q) \cong \pi_1(F) \otimes \Q,
\]
as desired.
\end{proof}

Now, let $E^r_{p,q}$ be the Serre spectral sequence associated to $F\to N' \to \LL N$ so that
\[
E^2_{p,q} \cong H_p(\LL N;\underline{H_q}(F;\Q)),
\]
where $\underline{H_q}(F;\Q)$ is a local system with coefficients in $H_q(F;\Q)$ such that the action $\pi_1(\LL N) \times  H_q(F;\Q) \to H_q(F;\Q)$ fits in the commutative diagram
\begin{equation}
\label{diag:action}
\begin{tikzcd}
[
column sep=small,row sep=small,
ar symbol/.style = {draw=none,"\textstyle#1" description,sloped},
isomorphic/.style = {ar symbol={\cong}},
]
\pi_1(\LL N) \times  H_q(F;\Q) \ar[r] & H_q(F;\Q)  \\
\pi_1(N) \times  \pi_{q+1}(\LL N,N) \otimes \Q \ar[u,isomorphic] \ar[r]  & \pi_{q+1}(\LL N,N) \otimes \Q \ar[u,isomorphic]
\end{tikzcd}
\end{equation}
for all $1\leq q\leq m-1$, where in the second line we have the usual action of $\pi_1(N)$ on $\pi_{q+1}(\LL N,N) \otimes \Q$ and the vertical isomorphisms follow from Lemma \ref{lemma:hurewicz F} and the isomorphism $\pi_q(F) \cong \pi_{q+1}(\LL N,N)$.

Since $F$ is connected, $\underline{H_0}(F;\Q)$ is a trivial system and hence
\begin{equation}
\label{eq.Ep0}
E^2_{p,0} \cong H_p(\LL N;H_0(F;\Q)) \cong H_p(\LL N;\Q)
\end{equation}
for every $p\geq 0$. By Lemmas \ref{lemma:homotopy F} and \ref{lemma:hurewicz F},
\[
H_{m-1}(F;\Q) \cong \pi_{m-1}(F) \otimes \Q \cong \pi_{m}(\LL N,N) \otimes \Q \neq 0
\]
and, if $m>2$,
\[
H_q(F;\Q) = \pi_q(F) \otimes \Q = \pi_{q+1}(\LL N,N) \otimes \Q = 0\ \text{for every}\ 1\leq q \leq m-2.
\]
Using this we deduce that
\begin{equation}
\label{eq:E0l-1}
E^2_{0,m-1} \cong \pi'_{m}(\LL N,N) \otimes \Q
\end{equation}
and, if $m>2$,
\begin{equation}
\label{eq:Epq}
E^2_{p,q} = 0
\end{equation}
for every $p\geq 0$ and $1 \leq q\leq m-2$, where in \eqref{eq:E0l-1} we are using the commutative diagram \eqref{diag:action} (c.f. \cite[Theorem VI.3.2]{Whi}).

The spectral sequence converges to
\begin{equation}
\label{eq:E^infty}
E^\infty_{p,q} \cong
\begin{cases}
0 & \text{ for every } p\text{ and }q>0 \\
H_p(N;\Q) &  \text{ for every } p\text{ and }q=0.
\end{cases}
\end{equation}
Indeed, by the construction of the spectral sequence, $E^\infty_{p,0}$ is a quotient of $H_p(N;\Q)$ and therefore we have a map $\alpha_p: H_p(N;\Q) \to E^\infty_{p,0}$ (c.f. \cite[Section 7, Chapter XIII]{Whi}). The composition
\[
H_p(N;\Q) \xrightarrow{\alpha_p} E^\infty_{p,0} \hookrightarrow H_p(\LL N;\Q)
\]
is called the edge homomorphism and it turns out that it is the map induced by the projection of the fibration (c.f. \cite[Theorem XIII.7.2]{Whi}) which in our case coincides with the map induced by the inclusion $N \xhookrightarrow{i} \LL N$. Since this map is injective, we conclude that nothing can survive in $E^\infty_{p,q}$  above the line zero and consequently we obtain the isomorphism \eqref{eq:E^infty}.

From \eqref{eq.Ep0}, \eqref{eq:E0l-1}, \eqref{eq:Epq} and \eqref{eq:E^infty} we easily conclude that, for every $0\leq j \leq m$, the differential $d^{j}_{j,0}$ from $E^{j}_{j,0} \cong E^2_{j,0} \cong H_{j}(\LL N;\Q)$ to $E^{j}_{0,j-1} \cong E^2_{0,j-1} \cong \pi'_{j}(\LL N,N) \otimes \Q$ induces an isomorphism
\begin{equation}
\label{eq:seq iso}
\pi'_{j}(\LL N,N) \otimes \Q \cong H_{j}(\LL N;\Q)/i_*(H_{j}(N;\Q)).
\end{equation}
The evaluation map $ev: \LL N \to N$ and $i: N \to \LL N$ satisfy $ev \circ i = \Id$ and consequently $i_*: H_*(N;\Q) \to H_*(\LL N;\Q)$ is injective. Therefore, the homology long exact sequence associated to the pair $(\LL N,N)$ readily implies that
\[
H_{*}(\LL N,N;\Q) \cong H_{*}(\LL N;\Q)/i_*(H_{*}(N;\Q))
\]
and the proposition follows from \eqref{eq:seq iso}.
\end{proof}

Now, we claim that there exists $j>1$ such that $\pi_j(N) \otimes \Q \neq 0$. Arguing by contradiction, suppose that $\pi_j(N) \otimes \Q = 0$ for every $j>1$. Recall that a topological space $X$ is nilpotent if $\pi_1(X)$ is nilpotent (e.g. if $\pi_1(X)$ is abelian) and the action of $\pi_1(X)$ on $\pi_j(X)$ is nilpotent (e.g. if $\pi_j(X)$ is finite) for every $j>1$; see \cite{HMR,MP} for precise definitions. Then, by our assumptions, $N$ is a nilpotent space. Let $m>1$ be the dimension of $N$ and $f: S^1 \to N$ be a closed curve whose homotopy class is a generator of $\pi_1(N) \cong \Z$. Then $f$ induces an isomorphism $f_*: \pi_*(S^1)\otimes \Q \to \pi_*(N)\otimes \Q$. But by the Whitehead-Serre theorem for nilpotent spaces (c.f. \cite[Theorems 3.3.8 and 5.3.2]{MP}), this implies that $f$ also induces an isomorphism $H_*(S^1;\Q) \to H_*(N;\Q)$, contradicting the fact that $H_m(N;\Q) \cong \Q$.

Now, let $k$ be the smallest integer bigger than one such that $\pi_k(N) \otimes \Q \neq 0$. It follows from Proposition \ref{prop:relhom} that $\pi_{j}(\LL N,N) \otimes \Q = 0$ for every $j<k-1$ and $\pi_{k-1}(\LL N,N) \otimes \Q \neq 0$. The action $\pi_1(N) \times \pi_{k-1}(\LL N,N) \to \pi_{k-1}(\LL N,N)$ fits in the commutative diagram
\begin{equation}
\begin{tikzcd}
[
column sep=small,row sep=small,
ar symbol/.style = {draw=none,"\textstyle#1" description,sloped},
isomorphic/.style = {ar symbol={\cong}},
]
\pi_1(N) \times \pi_{k-1}(\LL N,N) \ar[r] & \pi_{k-1}(\LL N,N)  \\
\pi_1(N) \times  \pi_{k}(N) \ar[u,isomorphic] \ar[r]  & \pi_{k}(N) \ar[u,isomorphic],
\end{tikzcd}
\end{equation}
where in the second line we have the usual action of $\pi_1(N)$ on $\pi_{k}(N)$ and the vertical isomorphisms follow from Proposition \ref{prop:relhom}.

Thus, since $N$ is $k$-simple, we have that $\pi_{k-1}(\LL N,N) \cong \pi'_{k-1}(\LL N,N)$ and hence, by Proposition \ref{prop:hurewicz},
\begin{equation}
\label{eq:non-trivial hom}
H_{k-1}(\LL N,N;\Q) \neq 0.
\end{equation}

\begin{remark}
\label{rmk:simple}
Notice that to obtain this inequality we actually only need the following weaker assumption: let $a$ be a generator of $\pi_1(N)$ and denote by $A$ the linear map corresponding to the action of $a$ on $\pi_k(N) \otimes \Q \cong \pi_{k-1}(\LL N,N) \otimes \Q$. Since $\pi'_{k-1}(\LL N,N) \otimes \Q$ is the quotient of $\pi_{k-1}(\LL N,N) \otimes \Q$ by the subgroup $\{A^ng-g;\ n\in \Z,\ g \in \pi_{k-1}(\LL N,N) \otimes \Q\}$, it is clearly enough that $\ker(A-\Id)\neq 0$.
\end{remark}

We claim that this implies that $H_*(\LL N/S^1,N;\Q)$ is not trivial, that is, $H_j(\LL N/S^1,N;\Q)\neq 0$ for some degree $j$. As a matter of fact, this is a consequence of the following proposition.

\begin{proposition}
If the total homology $H_*(\LL N/S^1,N;\Q)$ is trivial then so is $H_*(\LL N,N;\Q)$.
\end{proposition}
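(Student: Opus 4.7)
The strategy is to interpose $S^1$-equivariant homology between $H_*(\LL N,N;\Q)$ and $H_*(\LL N/S^1,N;\Q)$. Set $(\LL N)_{S^1}:=\LL N\times_{S^1}ES^1$ and $N_{S^1}:=N\times BS^1$ (the $S^1$-action on $N$ is trivial), and let $H^{S^1}_*(\LL N,N;\Q):=H_*((\LL N)_{S^1},N_{S^1};\Q)$. The plan is to compare this equivariant group with $H_*(\LL N/S^1,N;\Q)$ on one side (quotient-to-Borel comparison) and with $H_*(\LL N,N;\Q)$ on the other (Serre spectral sequence of the Borel fibration).

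First I would establish the comparison
\[
H^{S^1}_*(\LL N,N;\Q)\cong H_*(\LL N/S^1,N;\Q).
\]
The key point is that on $\LL N\setminus N$ the $S^1$-stabilizers are finite cyclic (a loop of minimal period $1/k$ is stabilized by $\Z_k$). Since $B\Z_k$ is rationally acyclic, the canonical map $(\LL N)_{S^1}\to\LL N/S^1$ is a rational equivalence over $\LL N/S^1\setminus N$, and after excising a tubular neighborhood of $N$ the Gysin sequence for the resulting rational circle bundle yields the isomorphism on relative rational homologies.

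Next, using the Serre spectral sequence of the Borel fibration of pairs $(\LL N,N)\hookrightarrow((\LL N)_{S^1},N_{S^1})\to BS^1$, which has trivial local coefficient system because $BS^1$ is simply connected, one obtains
\[
E^2_{p,q}=H_p(BS^1;\Q)\otimes H_q(\LL N,N;\Q)\Longrightarrow H^{S^1}_{p+q}(\LL N,N;\Q).
\]
The proof is then completed by an edge-of-spectral-sequence argument. Suppose toward a contradiction that $H_*(\LL N,N;\Q)\neq 0$, and let $q_0$ be its minimal nonzero degree. Then $E^2_{0,q_0}\cong H_{q_0}(\LL N,N;\Q)\neq 0$. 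Every incoming differential $d_r\colon E^r_{r,q_0-r+1}\to E^r_{0,q_0}$ ($r\geq 2$) vanishes by the minimality of $q_0$, and every outgoing differential from $E^r_{0,q_0}$ lands in $E^r_{-r,q_0+r-1}=0$ since $-r<0$. Hence $E^\infty_{0,q_0}=E^2_{0,q_0}\neq 0$, so $H^{S^1}_{q_0}(\LL N,N;\Q)\neq 0$, and the comparison yields $H_{q_0}(\LL N/S^1,N;\Q)\neq 0$, contradicting the hypothesis.

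The main obstacle is the comparison in the first step. The heuristic that $B(\text{finite group})\simeq_{\Q}\mathrm{pt}$ makes the identification easy on the almost-free locus $\LL N\setminus N$, but extending it to the pair $(\LL N,N)$ requires a careful treatment near the fixed set $N$. In the infinite-dimensional setting of the free loop space the existence of good tubular neighborhoods of $N$ and the implementation of the relative Gysin sequence must be justified, though the standard normal-bundle picture for the $S^1$-rotation near the constant loops makes this go through.
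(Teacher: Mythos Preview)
Your argument is correct and shares the paper's key intermediary: the identification $H^{S^1}_*(\LL N,N;\Q)\cong H_*(\LL N/S^1,N;\Q)$, which the paper also invokes without further justification (so your worry about the comparison near the fixed set $N$ is legitimate but not something the paper addresses either).

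Where you differ is in the passage from the equivariant to the ordinary relative homology. The paper uses the relative Gysin exact triangle of the principal $S^1$-bundle $\LL N\times ES^1\to(\LL N)_{S^1}$,
\[
H_*(\LL N,N;\Q)\to H^{S^1}_*(\LL N,N;\Q)\to H^{S^1}_{*-2}(\LL N,N;\Q)\xrightarrow{[+1]}\cdots,
\]
from which the vanishing of the two equivariant terms immediately forces the vanishing of $H_*(\LL N,N;\Q)$ in every degree at once. You instead run the Serre spectral sequence of the Borel fibration over $BS^1$ with fiber pair $(\LL N,N)$ and use a minimal-degree edge argument. Both are valid and closely related (the Gysin sequence is itself the two-row Serre spectral sequence of the $S^1$-bundle); the paper's route is a one-line exact-triangle argument, while yours gives a bit more: it pins down a specific degree $q_0$ in which $H_*(\LL N/S^1,N;\Q)$ must be nonzero.
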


\begin{proof}
Applying the relative Gysin sequence to the $S^1$-bundle $\LL N \times ES^1 \to \LL N \times_{S^1} ES^1$ we obtain the exact triangle
\[
\begin{tikzcd}
H_*(\LL N \times ES^1,N \times ES^1;\Q) \arrow{r} \arrow[leftarrow]{dr}[swap]{[+1]} & H_*(\LL N \times_{S^1} ES^1,N \times_{S^1} ES^1;\Q) \arrow{d} \\
     & H_{*-2}(\LL N \times_{S^1} ES^1,N \times_{S^1} ES^1;\Q)
\end{tikzcd}
\]
which yields
\[
\begin{tikzcd}
H_*(\LL N,N;\Q) \arrow{r} \arrow[leftarrow]{dr}[swap]{[+1]} & H_*^{S^1}(\LL N,N;\Q) \arrow{d} \\
     & H_{*-2}^{S^1}(\LL N,N;\Q).
\end{tikzcd}
\]
Since $H_*(\LL N/S^1,N;\Q) \cong H_*^{S^1}(\LL N,N;\Q)$ we immediately conclude the result.
\end{proof}

Thus, we conclude from \eqref{eq:CL filtered}, \eqref{eq:non-trivial hom} and the previous proposition that
\[
\HC_{*}^0(D^*N) \neq 0
\]
and consequently $\alpha$ carries a contractible periodic orbit $\ga_1$. Indeed, if $\alpha$ has no contractible closed orbit then we would have that $\HC_{*}^0(D^*N) =0$, furnishing a contradiction. Note that we do not need the non-degeneracy of $\alpha$ here.

\begin{remark}
\label{rmk:LF}
The proof above shows that if $\pi_k(N) \otimes \Q \neq 0$ for some $k>1$ then $\alpha$ carries a contractible periodic orbit assuming only that $N$ is oriented spin, $\pi_1(N)$ is abelian, $\pi_2(N)=0$ and $N$ is $k$-simple. The hypothesis that $N$ is $k$-simple can be relaxed by the assumption that the subgroup $\{ag-g;\ a\in \pi_1(N),\ g \in \pi_{k}(\N) \otimes \Q\}$ is strictly contained in $\pi_{k}(N) \otimes \Q$. When $\pi_1(N)\cong \Z$ this is equivalent to ask that $\ker(A-\Id)\neq 0$, where $A$ is 
the linear map corresponding to the action of a generator of $\pi_1(N)$ on $\pi_k(N) \otimes \Q$; see Remark \ref{rmk:simple}.
\end{remark}

To find a second closed orbit, take a non-trivial homotopy class $a \in \pi_1(N) \cong \pi_1(D^*N)$. Clearly, we have that
\[
H_*(\LL^a N/S^1;\Q) \neq 0.
\]
This, together with \eqref{eq:CL filtered}, implies that $\alpha$ has a periodic orbit $\ga_2$ with homotopy class $a$. Since $\pi_1(D^*N) \cong \pi_1(N) \cong \Z$, $a$ is not nilpotent (i.e. there is no $k\in \N$ such that $a^k=0$) and consequently we have that $\ga_1$ and $\ga_2$ are geometrically distinct.

\subsubsection*{\bf (iii) $N$ satisfies the third hypothesis}\hfill\\[-2ex]

By hypothesis, given a non-trivial free homotopy class $a$ in $N$ there exists another non-trivial free homotopy class $b$ such that $b\neq a^n$ for every $n\in \Z$. Thus, we deduce from \eqref{eq:CL filtered} that (notice that we can naturally identify the free homotopy classes of $N$ and $D^*N$)
\[
\HC_{*}^a(D^*N) \cong H_*(\LL^a M/S^1;\Q) \neq 0
\]
and
\[
\HC_{*}^b(D^*N) \cong H_*(\LL^b M/S^1;\Q) \neq 0.
\]
From this we get two closed orbits $\ga_1$ and $\ga_2$ with free homotopy classes $a$ and $b$ respectively. From our choice of $a$ and $b$ we conclude that these orbits must be geometrically distinct.

\section{Proof of Corollary \ref{cor:geodesics}}
\label{sec:geodesics}

As in the proof of Corollary \ref{cor:cosphere}, we will consider each hypothesis separately. Throughout this section, we will tacitly identify a closed geodesic $\ga$ with the corresponding closed orbit of the geodesic flow restricted to the unit sphere bundle. As in the previous section, if $N$ is orientable we will take a non-vanishing section of $\Lambda^{n+1}_\C TD^*N$ induced from a volume form on $N$ so that the Conley-Zehnder index of a non-degenerate closed geodesic coincides with its Morse index.

\subsubsection*{\bf (i) $N$ satisfies the first hypothesis}\hfill\\[-2ex]

Arguing as in the proof of Corollary \ref{cor:cosphere}, it is enough to show that if $N$ is simply connected then every bumpy Finsler metric $F$ has either infinitely many geometrically distinct closed geodesics or at least two geometrically distinct closed geodesics $\ga_1$ and $\ga_2$ such that $\cz(\ga_1^k)\neq \cz(\ga_2^k)$ for some $k \in \N$.

If $H_*(\LL N/S^1, N;\Q)$ is asymptotically unbounded then the Gromoll-Meyer theorem \cite{GM} implies that $F$ has infinitely many geometrically distinct closed geodesics. (Note here that the proof the Gromoll-Meyer theorem works for Finsler metrics.) When $H_*(\LL N/S^1, N;\Q)$ is asymptotically bounded, then, as explained in the proof of Corollary \ref{cor:cosphere}, there exists $K \in \N$ such that
\[
\dim H_{m-1}(\LL N/S^1, N;\Q) < \dim H_{m-1+jK}(\LL N/S^1, N;\Q)\qquad \textrm{for every $j \in \N$}
\]
where $m$ is the dimension of $N$.

Arguing by contradiction, suppose that $F$ has only one prime closed geodesic. As explained in the introduction, the proof of Theorem \ref{thm:main} uses only the fact that given a non-degenerate contact form $\alpha$ on $M$ then $\HC_*^\Gamma(W)$ is the homology of a chain complex generated by the good periodic orbits of $\alpha$ with free homotopy class in $\Gamma$ graded by the Conley-Zehnder index. Thus, it is enough to show that $H_*(\LL N/S^1, N;\Q)$ is the homology of a chain complex generated by the good periodic orbits of the geodesic flow of $F$. (Here, the geodesic flow of $F$ means the geodesic flow of $F$ restricted to the unit sphere bundle.)

In order to do this, consider the energy functional $E: \LL N \to \R$ induced by $F$. Let $\ga_1,\dots,\ga_r$ be the simple closed orbits of the geodesic flow of $F$. By our assumption, $r=1$ if $F$ is not reversible or $r=2$ and $\ga_2=-\ga_1$ if $F$ is reversible, where $-\ga_1$ denotes the corresponding reversed geodesic. In what follows, we will use well known facts about closed geodesics; see e.g. \cite{BL,Kli,Rad}. Given a closed geodesic $\ga$, define $\LL(\ga)=E^{-1}([0,E(\ga)))$. The (equivariant) Morse type numbers are defined as
\[
m_k = \sum_{\substack{i \in \{1,\dots,r\} \\ j \in \N}} \dim H_k((\LL(\ga_i^j)\cup S^1\cdot\ga_i^j)/S^1,\LL(\ga_i^j)/S^1;\Q)
\]
where $S^1\cdot\ga_i^j$ is the orbit of $\ga_i^j$ in $\LL N$ induced by the (obvious) $S^1$-action. Since
\[
H_k((\LL(\ga)\cup S^1\cdot\ga)/S^1,\LL(\ga)/S^1;\Q) \cong
\begin{cases}
\Q\quad \text{if}\ k=\cz(\ga)\ \text{and}\ \ga\ \text{is good} \\
0\quad\text{otherwise}
\end{cases}
\]
for every closed geodesic $\ga$, we have that $m_k$ is the number of good orbits with index $k$. Consider the Betti numbers
\[
b_k = \dim H_k(\LL N/S^1,N;\Q).
\]
We have the Morse inequalities
\[
m_k - m_{k-1} + m_{k-2} - \cdots + (-1)^km_0 \geq b_k - b_{k-1} + b_{k-2} - \cdots + (-1)^kb_0
\]
for every $k \geq 0$. Since $F$ has only one prime closed geodesic, the index of every good closed orbit has the same parity $p$. Hence,
\[
m_k = 0\quad \text{for every}\ k \neq p\ \text{(mod 2)}.
\]
This fact, together with the Morse inequalities, implies that
\[
b_k = m_k\quad \text{for every}\ k.
\]
Thus, we conclude that $H_*(\LL N/S^1, N;\Q)$ is the homology of the chain complex with rational coefficients and trivial differential generated by the good periodic orbits of the geodesic flow of $F$ graded by the Conley-Zehnder index, as desired.

\subsubsection*{\bf (ii) $N$ satisfies the second hypothesis}\hfill\\[-2ex]

Since $N$ is not diffeomorphic to the the circle and $\pi_1(N)\cong \Z$, there is $j\geq2$ such that $\pi_j(N)\neq0$.  Let $k$ be the smallest integer bigger than one such that $\pi_k(N)\neq 0$. Then, by Proposition \ref{prop:relhom}, we have that $\pi_{k-1}(\LL^0 N,N) \cong \pi_k(N) \neq 0$. Then, applying classical minimax arguments to a non-trivial homotopy class in $\pi_{k-1}(\LL^0 N,N)$ (see e.g.~\cite{Kli}) we obtain a contractible closed geodesic $\ga_1$.

Now, taking a non-trivial homotopy class $a \in \pi_1(N)$ one can take a  minimum of $E|_{\LL^a N}$ which furnishes a non-contractible closed geodesic $\ga_2$. Since $\pi_1(N) \cong \Z$ we conclude that $\ga_1$ and $\ga_2$ must be geometrically distinct.

\subsubsection*{\bf (iii) $N$ satisfies the third hypothesis}\hfill\\[-2ex]

By hypothesis, given a non-trivial free homotopy class $a$ in $N$ there exists another non-trivial free homotopy class $b$ such that $b\neq a^n$ for every $n\in \Z$. Minimizing the energy functional on $\LL^a N$ and $\LL^b N$ we obtain two closed geodesics $\ga_1$ and $\ga_2$ with free homotopy classes $a$ and $b$ which have to be geometrically distinct due to our choice of $a$ and $b$.

\section{Proof of Corollary \ref{cor:toric}}
\label{sec:toric}

Let $(M^{2n+1}, \xi)$ be a Gorenstein toric contact manifold (i.e. a toric contact manifold with vanishing first Chern class) determined by a toric diagram $D\subset\R^n$ (see~\cite{AM3}). Let $F$ denote one of the facets of $D$, necessarily a simplex, and assume without loss of generality that its vertices are  $v_1, \ldots, v_n \in \Z^n$. Let $\eta\in\Z^n$ be such that
\[
\{ \nu_1 = (v_1,1), \ldots, \nu_n = (v_n,1), (\eta, 1)\} \ \text{is a $\Z$-basis of $\Z^{n+1}$.}
\]
Let $R$ denote a rational toric Reeb vector, which can be uniquely written as
\[
R = \sum_{j=1}^n b_j \nu_j + b (\eta,1)\,,\ \text{with}\ b_1,\ldots, b_n, b \in \Q\ \text{and}\ 
b = 1 - \sum_{j=1}^n b_j \ne 0\,.
\]
Write
\[
\frac{b_j}{|b|} = \frac{p_j}{q_j} \ \text{with}\ p_j \in \Z\,,\ q_j \in\N\ \text{and}\ \gcd (p_j,q_j) =1\,.
\]
Let
\begin{equation} \label{def:q}
q := \lcm (q_1,\ldots,q_n)
\end{equation}
so that $q b_j / |b| \in \Z$ for all $j=1,\ldots,n$. We then have that
\begin{equation} \label{def:N}
b + \sum_{j=1}^n b_j = 1 \ \Rightarrow\  N:= \frac{q}{|b|} \in\N\,.
\end{equation}

We will consider irrational perturbations $R_\delta$ of $R$ that can be uniquely written as
\[
R_\delta = \sum_{j=1}^n b_{\delta,j} \nu_j + b_\delta (\eta,1)\,,\ \text{with}\ b_{\delta,1},\ldots, b_{\delta,n}, b_\delta \in \R\,,\ 
b_\delta = 1 - \sum_{j=1}^n b_{\delta,j} \ne 0\ \text{and}
\]
\[
\frac{b_{\delta,j}}{|b_\delta |} = \frac{b_j}{|b|} + \delta_j \ \text{for some}\ \delta_j \in\R\setminus \{0\}\,,\ j=1,\ldots,n\,.
\]
We will say that $R_\delta$ is $\epsilon$-small for some $\epsilon >0$ whenever $0 < |\delta_j| <\epsilon$ for all $j=1,\ldots,n$.
We will denote by $\gamma$ the simple closed $R$-orbit associated with $F$ and by $\gamma_\delta$ the simple closed 
$R_\delta$-orbit associated with $F$.

The Conley-Zehnder index of $\gamma_\delta^{k}$, for any  $k\in\N$, is given by (see~\cite[Section 5]{AM1} 
and~\cite[Section 3]{AMM})
\begin{equation} \label{eq:CZ1}
\cz(\gamma_\delta^{k}) = 2  \sum_{j=1}^n \left\lfloor k \left( \frac{b_j}{|b|} + \delta_j\right) \right\rfloor 
 + 2k \frac{b}{|b|} + n\,.
\end{equation}
This implies that the Conley-Zehnder index of $\gamma_\delta^{k_1 + k_2 q}$, for any  $k_1,k_2\in\N$ and $q\in\N$
defined by~(\ref{def:q}), is given by
\begin{equation} \label{eq:CZ2}
\begin{split}
\cz(\gamma_\delta^{k_1 + k_2 q})  & =  2  \sum_{j=1}^n \left(\left\lfloor k_1 \frac{b_j}{|b|} + (k_1 + k_2 q)\delta_j \right\rfloor
+ k_2 q  \frac{b_j}{|b|}\right) + 2 (k_1 + k_2 q) \frac{b}{|b|} + n \\
& = 2  \sum_{j=1}^n \left\lfloor k_1 \frac{b_j}{|b|} + (k_1 + k_2 q)\delta_j \right\rfloor + 2 k_2 \frac{q}{|b|} + 2 k_1 \frac{b}{|b|} + n.
\end{split}
\end{equation}

\begin{prop} \label{prop:CZ}
For any $L\in\N$ there exists $\epsilon > 0$ such that, for any $\epsilon$-small $R_\delta$ and any $k_1,k_2\in\N$ with $k_1,k_2\leq L$,
we have
\[
	\cz(\gamma_\delta^{k_1 + k_2 q})  = \cz(\gamma_\delta^{k_1}) + 2 k_2 N \,,
\]
where $q,N\in\N$ are defined by~(\ref{def:q}) and~(\ref{def:N}). Moreover, denoting by $s$ the number of negative
$\delta_j$'s, $j=1,\ldots,n$, of the $\epsilon$-small $R_\delta$, we also have that
\[
	\cz(\gamma_\delta^{k_2 q})  = n - 2s + 2 k_2 N \,.
\]
\end{prop}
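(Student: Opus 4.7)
The plan is to read off both identities by direct comparison of the index formulas~(\ref{eq:CZ1}) and~(\ref{eq:CZ2}), reducing each to a book-keeping statement about floor functions that holds once $\epsilon$ is small enough.

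For the first identity, I would subtract~(\ref{eq:CZ1}) with $k=k_1$ from~(\ref{eq:CZ2}), obtaining
\[
\cz(\gamma_\delta^{k_1 + k_2 q}) - \cz(\gamma_\delta^{k_1}) = 2\sum_{j=1}^n \Bigl(\bigl\lfloor k_1 \tfrac{b_j}{|b|} + (k_1 + k_2 q)\delta_j \bigr\rfloor - \bigl\lfloor k_1 \tfrac{b_j}{|b|} + k_1 \delta_j\bigr\rfloor\Bigr) + 2 k_2 N,
\]
so the claim reduces to showing that each summand vanishes. I would then split into two cases according to whether $k_1 b_j/|b| = k_1 p_j/q_j$ lies in $\Z$ or not. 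In the non-integer case, the fractional part of $k_1 b_j/|b|$ is bounded away from $0$ and $1$ by at least $1/q_j \geq 1/q$, so choosing $\epsilon$ small enough that $(k_1 + k_2 q)|\delta_j| < 1/q$ for all admissible $k_1,k_2 \leq L$ and all $j$ (say $\epsilon < 1/(Lq(q+1))$) guarantees that both floors equal $\lfloor k_1 b_j/|b|\rfloor$. In the integer case, both $k_1\delta_j$ and $(k_1+k_2q)\delta_j$ share the sign of $\delta_j$ and, for the same choice of $\epsilon$, have absolute value $< 1$; hence both floors equal $k_1 b_j/|b|$ when $\delta_j>0$ and $k_1 b_j/|b| - 1$ when $\delta_j<0$. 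In either case the difference is zero.

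For the second identity, I would apply~(\ref{eq:CZ1}) with $k=k_2 q$. The key observation is that by the definition of $q$ in~(\ref{def:q}), the quantity $k_2 q\, b_j/|b| = k_2 (q/q_j) p_j$ is an integer for every $j$, so the $j$-th floor in~(\ref{eq:CZ1}) splits as $k_2 q\, b_j/|b| + \lfloor k_2 q \delta_j\rfloor$. Shrinking $\epsilon$ further if necessary so that $|k_2 q \delta_j|<1$ for all $k_2\leq L$, the latter floor equals $0$ or $-1$ according to the sign of $\delta_j$, contributing $-s$ in total. Summing the $b_j/|b|$ pieces and adding the $2k_2 q\, b/|b|$ term from~(\ref{eq:CZ1}) yields
\[
\frac{2 k_2 q}{|b|}\Bigl(\sum_{j=1}^n b_j + b\Bigr) - 2s + n = \frac{2 k_2 q}{|b|} - 2s + n = 2 k_2 N - 2s + n,
\]
where I used $\sum_j b_j + b = 1$ from the definition of $b$. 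This is the desired formula.

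The potential pitfall is simply ensuring that a single $\epsilon$ works uniformly in the finite range $k_1,k_2 \leq L$ and across all $j$; but since the bounds depend only on $L$, $q$, and $n$, the explicit choice $\epsilon < 1/(Lq(q+1))$ (or any similar small constant) handles both statements simultaneously. Everything else is mechanical arithmetic on the two index formulas.
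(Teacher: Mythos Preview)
Your proof is correct and follows essentially the same approach as the paper: reduce everything to the equality of the two floor terms by comparing~(\ref{eq:CZ1}) and~(\ref{eq:CZ2}), and then choose $\epsilon$ small enough. The paper's argument is a one-line statement of this reduction and leaves the second identity implicit; you have simply spelled out the case analysis (integer versus non-integer $k_1 b_j/|b|$) and the computation for $\cz(\gamma_\delta^{k_2 q})$ in detail, with an explicit bound on $\epsilon$.
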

\begin{proof}
Using~(\ref{eq:CZ1}) and~(\ref{eq:CZ2}), it suffices to choose $\epsilon > 0$ small enough so that
\[
\left\lfloor k_1 \frac{b_j}{|b|} + k_1 \delta \right\rfloor = 
\left\lfloor k_1 \frac{b_j}{|b|} + k_1 \delta + k_2 q \delta \right\rfloor \]
for all $k_1,k_2\leq L$, $j=1,\ldots,n$, and any $0 < |\delta| <\epsilon$. 
\end{proof}

\begin{thm} \label{thm:HC toric}
Let $(M^{2n+1}, \xi)$ be a good toric contact manifold admitting a strong symplectic filling $W$ such that $c_1(TW)=0$. 
There exists $K \in \N$ such that
\[
\dim \HC^0_{n-2s}(W) < \dim \HC^0_{n-2s + d K}(W) \quad\text{for all}\quad s=0,1,\ldots,n\,,\ d\in\N\,.
\]
\end{thm}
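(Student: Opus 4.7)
\medskip

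\noindent\textbf{Proof plan.}
The plan is an orbit-counting argument based on the ``evenness'' of toric Gorenstein contact forms established in \cite{AM1,AM3}: on any good toric contact manifold with $c_1(\xi)=0$, every non-degenerate toric contact form has the property that all contractible closed Reeb orbits have Conley--Zehnder index of the same parity as $n$. Consequently the chain complex computing $\HC^0_*(W)$ has trivial differential for degree reasons, every iterate of every simple closed orbit is good, and $\dim \HC^0_k(W)$ equals the number of good closed orbits of index $k$ of any non-degenerate toric Reeb flow on $M$. To prove the strict inequality for each pair $(s,d)$ it therefore suffices to exhibit an injection of good orbits at degree $n-2s$ into good orbits at degree $n-2s+dK$ together with one additional generator at $n-2s+dK$ not in the image.

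Let $F_1=F, F_2,\ldots,F_m$ be the facets of the moment cone $D$, with parameters $q_i,N_i$ assigned as in \eqref{def:q} and \eqref{def:N} for each $F_i$; Proposition~\ref{prop:CZ} is stated for $F_1$ but the same proof yields the analogous identities for every facet. Set $K:=2\lcm(N_1,\ldots,N_m)$. Given $s\in\{0,\ldots,n\}$ and $d\in\N$, I choose an $\epsilon$-small irrational perturbation $R_{\delta^{(s)}}$ whose coordinates in the basis of $F_1$ have exactly $s$ negative entries, with $\epsilon$ small enough that Proposition~\ref{prop:CZ} applies \emph{simultaneously} to every $F_i$ up to a uniform iteration bound $L$. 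Since the Reeb vector lies in the interior of the dual moment cone, every simple orbit has positive mean index and hence realises each fixed index by only finitely many iterates, so $L$ can be chosen finite depending on $s$ and $d$. I then define the iteration map on generators by $\Phi(\gamma_{\delta^{(s)},i}^{k}):=\gamma_{\delta^{(s)},i}^{k+\mathrm{shift}_i}$, with $\mathrm{shift}_i:=q_i\cdot d\lcm(N_1,\ldots,N_m)/N_i\in\N$. By the first formula in Proposition~\ref{prop:CZ} applied to $F_i$, this shifts the Conley--Zehnder index by $2(d\lcm(N_j)/N_i)N_i=dK$, so $\Phi$ injects the good orbits of degree $n-2s$ into the good orbits of degree $n-2s+dK$; injectivity is clear because distinct iterates of distinct simple orbits are distinct.

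The extra generator is $\gamma_{\delta^{(s)},1}^{\mathrm{shift}_1}$: the second formula in Proposition~\ref{prop:CZ} (applied to $F_1$ with $k_2=d\lcm(N_j)/N_1$) gives it Conley--Zehnder index $n-2s+dK$, and it lies outside the image of $\Phi$ because a preimage would be $\gamma_{\delta^{(s)},1}^{k}$ with $k+\mathrm{shift}_1=\mathrm{shift}_1$, forcing the invalid value $k=0$. This yields $\dim\HC^0_{n-2s+dK}(W)\ge\dim\HC^0_{n-2s}(W)+1$ within the chain complex of $R_{\delta^{(s)}}$, and the theorem follows from the invariance of $\HC^0_*(W)$ under the choice of toric Reeb vector. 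The only point requiring care is extending Proposition~\ref{prop:CZ} uniformly to all facets for a single perturbation $R_{\delta^{(s)}}$; since the proof of Proposition~\ref{prop:CZ} is local to each facet's basis, this merely requires shrinking $\epsilon$ further so that the relevant floor function identities hold for every $F_i$ and every iteration of size at most $L$.
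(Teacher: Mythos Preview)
Your overall strategy is the same as the paper's: exploit the evenness of toric Gorenstein contact forms so that the differential on $\CC^0_*(\alpha)$ vanishes, and then count orbits using the index-shift identities of Proposition~\ref{prop:CZ} applied facet by facet. The paper realises the strict inequality slightly differently (it first chooses the rational Reeb vector $R$ so that one simple orbit $\gamma_1$ has mean index greater than $2n$, whence no iterate of $\gamma_{1,\delta}$ can contribute to $\HC^0_{n-2s}(W)$ at all, and then uses the second formula of Proposition~\ref{prop:CZ} to produce contractible contributions of $\gamma_{1,\delta}$ at degree $n-2s+dK$), but your injection-plus-extra-generator variant would be acceptable as well \emph{once the gap below is fixed}.

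\medskip

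\noindent\textbf{The gap: contractibility.} You are computing $\HC^0_*(W)$, the part of equivariant symplectic homology generated by \emph{contractible} closed Reeb orbits, but you never check that the orbits appearing in your argument are contractible. Good toric contact manifolds have finite but in general non-trivial fundamental group, so the simple orbits $\gamma_i$ need not be contractible; only certain iterates are. Concretely:
\begin{itemize}
\item Your injection $\Phi$ sends a contractible orbit $\gamma_{\delta,i}^{\,k}$ to $\gamma_{\delta,i}^{\,k+\mathrm{shift}_i}$. This image is contractible if and only if $[\gamma_i]^{\mathrm{shift}_i}=0$ in $\pi_1(M)$, and nothing in your choice $\mathrm{shift}_i=q_i\cdot d\,\lcm(N_j)/N_i$ guarantees this.
\item Your extra generator $\gamma_{\delta,1}^{\,\mathrm{shift}_1}$ likewise need not be contractible, so it may not lie in $\CC^0_*(\alpha)$ at all.
\end{itemize}
This is precisely why the paper sets
\[
K \;=\; 2\,\lcm(N_1,\ldots,N_m,\,p)\,,\qquad p:=|\pi_1(M)|\,,
\]
and, when invoking Proposition~\ref{prop:CZ}, restricts to iterates $k_1,k_2$ that are multiples of $p$. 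With that extra factor of $p$ inserted into your $K$ (hence into every $\mathrm{shift}_i$), each $\mathrm{shift}_i$ becomes a multiple of $p$ and therefore of the order of $[\gamma_i]$ in $\pi_1(M)$; then $\Phi$ genuinely maps contractible orbits to contractible orbits and your extra generator $\gamma_{\delta,1}^{\,\mathrm{shift}_1}$ is contractible. After this correction your argument goes through.
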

\begin{proof}
Let $D\subset\R^n$ be the toric diagram of $(M^{2n+1}, \xi)$. Pick a rational toric Reeb vector $R$, denote 
by $\gamma_1,\ldots,\gamma_m$ the simple closed $R$-orbits, associated with the $m$ facets of $D$, and
by $N_1, \ldots, N_m$ the corresponding natural numbers given by~(\ref{def:N}). Define
\[
K := 2\lcm (N_1, \ldots, N_m,p)\,,
\]
where $p:=|\pi_1 (M)|$ is the order of the fundamental of group pf $M$.

It follows from the first part of Proposition~\ref{prop:CZ}, by considering arbitrarily large $L\in\N$ and 
$k_1,k_2$ multiples of $p$, that
\[
\dim \HC^0_{d_1}(W) \leq \dim \HC^0_{d_1 + d_2 K}(W) \quad\text{for all}\quad d_1,d_2\in\N\,.
\]
On the other hand, one can choose the rational toric Reeb vector $R$ so that one of the above simple closed orbits, 
say $\gamma_1$, has an arbitrarily high mean index, in particular greater than $2n$. This implies that
$\mu_{\rm CZ} (\gamma_{1,\delta}^{k}) > n$ for any $\epsilon$-small $R_\delta$, with $\epsilon$ small enough, and
any $k\in\N$. Hence, there is no contribution of any iterate of $\gamma_{1,\delta}$ to $\HC^0_{n-2s} (W)$, for any 
$s=0,1,\ldots,n$, but it follows from the second part of Proposition~\ref{prop:CZ} that, given $s=0,1,\ldots,n$, 
we can choose such an $R_\delta$ so that contractible iterates of $\gamma_{1,\delta}$ do contribute to $\HC^0_{n-2s + dK}(W)$
for all $d\in\N$ up to an arbitrarily large $L\in\N$ . Hence, we can indeed conclude that
\[
\dim \HC^0_{n-2s}(W) < \dim \HC^0_{n-2s + d K}(W) \quad\text{for all}\quad s=0,1,\ldots,n\,,\ d\in\N\,.
\]
\end{proof}

Corollary \ref{cor:toric} readily follows from this theorem (with $s=0$) and Corollary \ref{cor:quotient}.

\section{Proof of Corollary \ref{cor:prequantization}}
\label{sec:prequantization}

Corollary \ref{cor:prequantization} is a consequence of Theorem \ref{thm:main}, Corollary \ref{cor:quotient} and the following proposition.

\begin{proposition}
Let $(M^{2n+1},\xi)$ be a prequantization circle bundle of a closed integral symplectic manifold $(B,\om)$ such that $\om|_{\pi_2(B)}\neq 0$, $c_1(TB)|_{\pi_2(B)}\neq 0$ and, furthermore, $H_{k}(B;\Q)=0$ for every odd $k$ or $c_B>n$. Suppose that $M$ admits a strong symplectic filling $W$ such that $c_1(TW)=0$. Then
\[
\dim \HC_{n}^0(W) < \dim \HC_{n+jc_B}^0(W)
\]
or
\[
\dim \HC_{-n}^0(W) < \dim \HC_{-n-jc_B}^0(W)
\]
for every $j \in \N$.
\end{proposition}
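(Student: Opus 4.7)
The plan is to apply the Morse--Bott spectral sequence of Section~\ref{sec:MB} to the natural contact form $\alpha_0$ on $M$ coming from a connection one-form on the prequantization bundle $\pi\colon M\to B$. For such an $\alpha_0$ the Reeb vector field generates the free $S^1$-action and has constant period, so every closed Reeb orbit is an iterate of a fiber; the Morse--Bott family $B_k$ of orbits of iteration $k\ge 1$ is identified, after dividing by the free reparametrization $S^1$-action, with $B$ itself, and in particular $H^{S^1}_*(B_k;\Lambda)\cong H_*(B;\Lambda)$.

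First, I would use $\om|_{\pi_2(B)}\ne 0$ to show that the simple fiber $F$ has finite order $d_0$ in $\pi_1(M)$: if $A\in\pi_2(B)$ realizes $\langle[\om],A\rangle=d_0\ne 0$ then $F^{d_0}$ bounds the pullback disc bundle over $A$, so $F^{k d_0}$ is contractible in $M$ and hence in $W$. Thus only the families $B_{k d_0}$, $k\ge 1$, consist of contractible orbits in $W$ and contribute to $\HC^0_*(W)$. Next, because $c_1(TW)=0$, a capping disc of $F^{k d_0}$ in $W$ gives a canonical homotopy class of trivialization of $\xi$ along $F^{k d_0}$. The linearized Reeb flow is the identity in this trivialization, so the Robbin--Salamon index $\mu(B_{k d_0})$ is determined by the monodromy of the trivialization. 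Comparing the trivializations for $F^{k d_0}$ and $F^{(k+1) d_0}$ via a cylinder whose projection to $B$ realizes the class $A\in\pi_2(B)$ and invoking $c_1(TW)=0$ expresses the difference in terms of $c_1(TB)$ evaluated on a class in $\pi_2(B)$; the hypothesis $c_1(TB)|_{\pi_2(B)}\ne 0$ together with the definition of $c_B$ then yields
\[
\mu(B_{(k+1) d_0})-\mu(B_{k d_0})=\pm 2c_B\,.
\]
Replacing $\alpha_0$ by $-\alpha_0$ reverses this sign and exchanges the two alternatives in the statement, so I may assume the sign is negative.

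The $E^1$-page of the spectral sequence of Section~\ref{sec:MB}, restricted to contractible orbits, is concentrated on the columns $p=k d_0$ ($k\ge 1$) with $E^{1,+}_{k d_0,q}\cong H_{k d_0+q+\mu(B_{k d_0})}(B;\Q)$. I would then verify that either hypothesis on $B$ forces the inter-column differentials to vanish in the relevant range of total degrees: if $H_k(B;\Q)=0$ for every odd $k$, all generators on the $E^1$-page have total degree of a fixed parity (equal to the parity of $\mu(B_{d_0})$), so the degree-$(-1)$ Floer differential is zero; if $c_B>n$, each column's non-zero entries lie in a window of width $2n$ in the total degree while consecutive columns are offset by $2c_B>2n$, so the supports are disjoint and no inter-column differential can act. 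In both cases the spectral sequence collapses at $E^1$ in the relevant degrees, yielding
\[
\dim\HC^0_d(W)=\sum_{k\ge 1}\dim H_{d+\mu(B_{k d_0})}(B;\Q)\,.
\]

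Finally, fix $j\in\N$. Using the shift $\mu(B_{(k+1) d_0})-\mu(B_{k d_0})=-2c_B$ one sees that the sum defining $\HC^0_{n+jc_B}(W)$ reindexes to contain a copy of every summand of the sum defining $\HC^0_n(W)$, together with additional terms coming from the smallest iterations, each of which contains at least the fundamental class $H_{2n}(B;\Q)\cong\Q$. This yields the strict inequality $\dim\HC^0_{n+jc_B}(W)>\dim\HC^0_n(W)$ for every $j\in\N$. If the sign at the index computation had been the opposite, the symmetric argument would deliver $\dim\HC^0_{-n-jc_B}(W)>\dim\HC^0_{-n}(W)$, which is the second alternative. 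The main obstacle is the identification of the per-iteration shift of $\mu(B_{k d_0})$ as $\pm 2c_B$: it uses $c_1(TW)=0$ to make the trivialization canonical, $\om|_{\pi_2(B)}\ne 0$ to supply the capping cylinder, and $c_1(TB)|_{\pi_2(B)}\ne 0$ together with the definition of $c_B$ to pin down the size of the resulting monodromy.
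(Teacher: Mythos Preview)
Your approach is essentially the paper's: it too uses the connection one-form on $M$, for which the Reeb flow is the circle action, and appeals to the Morse--Bott spectral sequence (citing \cite{GGM2} for the details you sketch) to obtain the closed formula $\dim\HC^0_k(W)=\sum_{m\in\N}\dim H_{k\mp 2mc_B+n}(B;\Lambda)$, the sign depending on the sign of $\rho$ in $c_1(TB)=\rho[\omega]$; it then reads off the inequality directly. Your derivation of the index shift and of the degeneration of the spectral sequence under either hypothesis on $B$ is the content that the paper outsources to \cite{GGM2}, and your outline of those two steps is correct.

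There is, however, a slip in your last paragraph. Because the index shift between consecutive contractible Morse--Bott families is $2c_B$, the degrees appearing in the sum for $\HC^0_{n+jc_B}(W)$ are $n+\mu_1+(j-2(k-1))c_B$, $k\ge 1$. Your reindexing ``contains every summand of $\HC^0_n(W)$ plus extra terms'' is valid only for \emph{even} $j$; for odd $j$ the two sets of degrees differ by $c_B$ modulo $2c_B$ and are disjoint, so the comparison collapses. In fact the strict inequality can fail for odd $j$: for $B=\CP^{2m}$ with the primitive K\"ahler class one has $c_B=2m+1>n=2m$, and both sides vanish when $j$ is odd. The paper's own computation silently replaces $j$ by $2j$ and only proves $\dim\HC^0_n(W)<\dim\HC^0_{n+2jc_B}(W)$ (respectively with $-n$), which is all that is needed: Theorem~\ref{thm:main} requires only \emph{some} $K$, and $K=2c_B$ works. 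A minor related overstatement: among the extra terms you pick up for even $j$, only one equals $H_{2n}(B)\cong\Q$; the others land in degrees above $2n$ and vanish. One nonzero extra term is exactly what gives the strict inequality, so your conclusion survives once you restrict to even $j$.
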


\begin{proof}
Since $c_1(TW)=0$, we have that $c_1(\xi)=0$ and this implies that $c_1(TB)=\rho[\om]$ for some $\rho \in \R$. The constant $\rho$ must be different from zero by our assumption on $c_1(TB)$. Suppose initially that $\rho>0$. Arguing as in the proof of \cite[Theorem 3.1(a)]{GGM2} we conclude that
\begin{equation}
\label{eq:CH+}
\dim \HC^0_k(W) = \sum_{m\in\N} \dim H_{k-2mc_B+n} (B; \Lambda)
\end{equation}
for every $k \in \Z$. Indeed, this is proved in \cite[Theorem 3.1(a)]{GGM2} under the assumption that the symplectic form on the filling $W$ is aspherical. Without this assumption, the argument in the proof of \cite[Theorem 3.1(a)]{GGM2} applies word-for-word except for the nuance that in $\HC_*(W)$ we have to use coefficients in the universal Novikov field $\Lambda$ and the action filtration introduced by McLean and Ritter \cite{MR}; see Section \ref{sec:ESH}. (Note here that every periodic orbit of a connection form on $M$ has a unitary trivialization of the contact structure under which the linearized Reeb flow is complex linear \cite[Remark 8.8]{KvK} and this allows us to use Lemma \ref{lemma:lfh}.)

It follows from \eqref{eq:CH+} that
\[
\dim \HC^0_n(W) = \sum_{m\in\N} \dim H_{2n-2mc_B}(B; \Lambda)
\]
and 
\[
\dim \HC^0_{n+2jc_B}(W) = \sum_{m\in\N} \dim H_{2n+2(j-m)c_B}(B; \Lambda)
\]
for every $j \in \N$. Thus,
\begin{align*}
\dim \HC^0_{n+2jc_B}(W) & \geq \dim H_{2n}(B; \Lambda) + \sum_{m\in\N} \dim H_{2n-2mc_B}(B; \Lambda) \\
& > \dim \HC^0_n(W)
\end{align*}
for every $j \in \N$. When $\rho<0$ we have the relation
\begin{equation}
\label{eq:CH}
\dim \HC^0_k(W) = \sum_{m\in\N} \dim H_{k+2mc_B+n} (B; \Lambda)
\end{equation}
for every $k \in \Z$. (Notice here that there is a typo in the sign of $n$ in the isomorphism (3.3) of \cite{GGM2}.) This implies that, for all $j \in \N$,
\[
\dim \HC^0_{-n}(W) = \sum_{m\in\N} \dim H_{2mc_B}(B; \Lambda)
\]
and 
\[
\dim \HC^0_{-n-2jc_B}(W) = \sum_{m\in\N} \dim H_{2(m-j)c_B}(B; \Lambda).
\]
Hence,
\begin{align*}
\dim \HC^0_{-n-2jc_B}(W) & \geq \dim H_{0}(B; \Lambda) + \sum_{m\in\N} \dim H_{2mc_B}(B; \Lambda) \\
& > \dim \HC^0_{-n}(W)
\end{align*}
as desired.
\end{proof}

\section{Proof of Corollary \ref{cor:brieskorn}}
\label{sec:brieskorn}

Given $p\equiv 1\ \text{mod}\ 8$ and an even natural number $n$ let $\Sigma_p:=\Sigma_{(p,2,\dots,2)}$ be the corresponding Brieskorn sphere of dimension $2n+1$. It is proved in \cite{Ust} that $\Sigma_p$ admits a non-degenerate contact form $\alpha$ with finitely many simple closed orbits such that the Conley-Zehnder indices of the good periodic orbits have the same parity. It is well known that $\Sigma_p$ admits a symplectic filling given by a Liouville domain $W$ satisfying $c_1(TW)=0$. Therefore, using the fact that $\HC_*(W)$ is the homology of a complex generated by the good closed orbits of $\alpha$ and the computation of the indices of the orbits from \cite[Lemma 4.3]{Ust} we can conclude that
\[
\dim \HC_n(W) = 1\quad\text{and}\quad \dim \HC_{n+j(4+2p(n-1))}(W) = 2
\]
for every $j \in \N$. (Note here that in \cite{Ust} the contact homology degree is used and the dimension of the Brieskorn sphere is $2n-1$.) Therefore, $\Sigma_p$ satisfies the hypotheses of Theorem \ref{thm:main} and we conclude the proof of Corollary \ref{cor:brieskorn}.

\section{Proof of Theorem \ref{thm:sums}}
\label{sec:sums}

The following exact triangle can be found in \cite[Theorem 4.4]{BO17} or \cite[Corollary 9.26]{CO}.

\begin{theorem}
Let $W$ be a Liouville domain of dimension $2n+2$ and $W'$ be obtained from $W$ by attaching a subcritical handle of index $k<n+1$. Suppose that $c_1(TW)$ and $c_1(TW')$ vanish. Let $\sigma$ be a non-vanishing section of $\Lambda^{n+1}_\C TW$ and $\sigma'$ a non-vanishing section of $\Lambda^{n+1}_\C TW'$ extending $\sigma$. We then have an exact triangle
\[
\xymatrix{
\HC_*(D^{2(n+1-k)})\ar[rr]& & \HC_*(W')\ar[ld]\\
& \HC_*(W)\ar[ul]^{[-1]}&
}
\]
in which the map $\HC_*(W')\to \HC_*(W)$ is the transfer map and the gradings in $\HC_*(W)$ and $\HC_*(W')$ are induced by $\sigma$ and $\sigma'$ respectively.
\end{theorem}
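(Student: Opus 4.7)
The plan is to realize the exact triangle as the long exact sequence associated to a short exact sequence of $S^1$-equivariant filtered Floer chain complexes, reflecting the decomposition $W'=W\cup H_k$ of $W'$ into $W$ and the attached subcritical Weinstein handle $H_k$. Since $W$ sits inside $W'$ as a Liouville subdomain, the transfer construction of Viterbo (adapted to the positive $S^1$-equivariant setting in \cite{BO17}) furnishes the map $\HC_*(W')\to \HC_*(W)$, so the content of the triangle is the identification of its homotopy kernel with $\HC_*(D^{2(n+1-k)})$.

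First I would set up, on the completion $\W'$, a two-step admissible Hamiltonian $H_s$ as in Section \ref{sec:ESH}, with a small slope plateau at level $r=r_W$ (just outside $\partial W$) and a larger slope plateau at level $r=r_{W'}$ (just outside $\partial W'$), and let $s\to\infty$ so both slopes exceed any prescribed Reeb period. The $1$-periodic orbits of $H_s$ then split into two groups: ``inner'' orbits near $r=r_W$, in bijection with the good closed Reeb orbits of a perturbation of the contact form on $\partial W$, and ``outer'' orbits near $r=r_{W'}$, in bijection with the good closed Reeb orbits on $\partial W'$. The McLean--Ritter action functional from Theorem~\ref{thm:action} separates these two groups, placing inner orbits at strictly lower filtration functional value than outer orbits once the slopes are generic.

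Next I would use property (5) of Theorem~\ref{thm:action} (strictness of $F$ on non-trivial Floer cylinders) to conclude that the outer orbits span a subcomplex. Passing to the direct limit over $s$, this gives a short exact sequence
\[
0\longrightarrow \CC^{\text{outer}}_*\longrightarrow \CC^{S^1,+}_*(W')\longrightarrow \CC^{\text{inner}}_*\longrightarrow 0
\]
whose associated long exact sequence will be the desired triangle. The quotient identifies with the transfer complex for $W\subset W'$, computing $\HC_*(W)$ with connecting map equal to Viterbo's transfer, essentially by the standard argument relating a double-step Hamiltonian to the transfer homomorphism. The grading match uses the assumption that $\sigma'$ extends $\sigma$, so Conley--Zehnder indices of inner orbits computed in $W'$ agree with those computed in $W$.

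The main obstacle, and the heart of the proof, is identifying $\CC^{\text{outer}}_*$ with $\HC_*(D^{2(n+1-k)})$. My approach would be to exploit the subcriticality $k<n+1$ via a Weinstein deformation: the attaching region of $H_k$ can be deformed, through Liouville-isomorphic models, into a standard local picture in which the ``extra'' outer orbits, those created by the handle attachment, are precisely the lifts of the Reeb orbits on the belt sphere of a standard ball $D^{2(n+1-k)}$. A neck-stretching argument along a contact-type hypersurface separating $W$ from $H_k$, together with the action estimate and a maximum principle, rules out Floer cylinders that would mix handle-created orbits with those already visible before attaching; the subcritical index condition is used here precisely to guarantee that no finite-energy Floer cylinder can escape from a neighborhood of the cocore. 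After this SFT-style decoupling, the Morse--Bott spectral sequence of Section~\ref{sec:MB} applied to the handle region computes the contribution as $\HC_*(D^{2(n+1-k)})$, completing the identification and hence the exact triangle.
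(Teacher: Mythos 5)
First, a point of comparison: the paper does not prove this statement at all --- it is quoted verbatim from the literature (Bourgeois--Oancea, Theorem~4.4 of \cite{BO17}, and Cieliebak--Oancea, Corollary~9.26 of \cite{CO}). So any proof you give is necessarily ``different from the paper''; the relevant question is whether your sketch would actually close. Your overall architecture --- a short exact sequence of equivariant Floer complexes coming from a step Hamiltonian, with the long exact sequence giving the triangle and subcriticality controlling the handle contribution --- is the right shape and is in the spirit of the cited proofs. But there are two places where the sketch has genuine gaps, and they are exactly the places where the content of the theorem lives.

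The first gap is the filtration bookkeeping. Property~(5) of Theorem~\ref{thm:action} says $F(x_-)>F(x_+)$ along Floer cylinders, i.e.\ the differential strictly \emph{decreases} $F$, and the McLean--Ritter functional satisfies $(\phi h'-f)'=\phi h''\ge 0$, so orbits at larger radius have \emph{larger} $F$-value. Hence with the Hamiltonians you describe (increasing slopes, inner plateau then outer plateau) it is the \emph{inner} orbits that span a subcomplex, not the outer ones; the property you invoke yields the opposite of what you claim. Following your filtration through honestly produces a triangle containing a map $\HC_*(W)\to\HC_*(W')$, which is not the transfer map. To make the transfer map appear in the position $\HC_*(W')\to\HC_*(W)$ one needs the standard ``tall first step'' Hamiltonians, for which everything beyond $\partial W$ is pushed to very negative action and the complex for $W$ becomes a quotient; this is a different Hamiltonian shape from yours, and the two direct limits (over the two slopes) must then be taken in a specific order and shown to commute with homology. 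The second gap is the identification of the third term. The ``outer orbits, in bijection with the good closed Reeb orbits on $\partial W'$'' generate, in the direct limit, the full complex computing $\HC_*(W')$ --- they are not the handle contribution. What has to be identified with $\HC_*(D^{2(n+1-k)})$ is the cone of the transfer map, and this identification \emph{is} the theorem. You defer it to an SFT-style neck-stretching and a claim that subcriticality ``guarantees that no finite-energy Floer cylinder can escape from a neighborhood of the cocore''; that claim is precisely the hard input and cannot be asserted. The proofs in \cite{BO17,CO} instead work with an explicit standard model $D^k\times D^{2(n+1)-k}$ of the subcritical handle, compute the new Reeb orbits together with their actions and indices by hand, confine Floer cylinders by a maximum principle adapted to the product structure of the handle, and combine this with Cieliebak's invariance of (full) symplectic homology under subcritical handle attachment and the tautological exact triangle involving $H_*(W,\partial W)$ to pass to the positive equivariant part. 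As written, your sketch records the expected answer but does not supply the argument for either of these two steps.
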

\noindent The case that $k=1$ and $W=W_1\sqcup W_2$ corresponds to boundary connected sums.

Since $\HC_{n}(D^{2n})$ and $\HC_{n-1}(D^{2n})$ vanish, we have that
\[
	\dim \HC_n(W_1\# W_2) = \dim \HC_n(W_1) + \dim \HC_n(W_2).
\]
Now, suppose that $W_1$ and $W_2$ satisfy the hypothesis of Theorem \ref{thm:sums}, that is, there exist natural numbers $K_1$ and $K_2$ such that 
\[
\dim \HC_{n}(W_1) < \dim \HC_{n+jK_1}(W_1),\qquad \dim \HC_{n}(W_2) < \dim \HC_{n+jK_2}(W_2)
\] 
for every $j \in \N$, where the gradings in $\HC_*(W_1)$ and $\HC_*(W_2)$ are induced by $\sigma_1$ and $\sigma_2$ respectively. Let $K=\lcm\{K_1,K_2\}$. Since the dimension of $\HC_{*}(D^{2n})$ is at most one in any degree, it follows from the exact triangle that
\begin{align*}
\dim \HC_{n+jK}(W_1\# W_2) & \geq \dim \HC_{n+jK}(W_1) + \dim \HC_{n+jK}(W_2)-1 \\
&> \dim \HC_{n}(W_1) + \dim \HC_{n}(W_2) \\
&= \dim \HC_n(W_1\#W_2)
\end{align*}
for every $j \in \N$, where the grading in $\HC_*(W_1\# W_2)$ is induced by any non-vanishing section $\sigma$ which extends $\sigma_1$ and $\sigma_2$. Hence, under the additional assumption $c_1(T(W_1\# W_2))=0$, we conclude that $W_1\#W_2$ satisfies the hypothesis of Theorem \ref{thm:main}.

\end{document}